\theoremstyle{plain}
\newtheorem{thm}{Theorem}[section]
\newtheorem*{thm*}{Theorem}
\newtheorem*{cor*}{Corollary}
\newtheorem{prop}[thm]{Proposition}
\newtheorem{proposition}[thm]{Proposition}
\newtheorem{lem}[thm]{Lemma}
\newtheorem{cor}[thm]{Corollary}
\newtheorem*{claim*}{Claim}
\newtheorem*{acknowledgement}{Acknowledgement}
\theoremstyle{definition}
\newtheorem{definition}[thm]{Definition}
\newtheorem{ex}[thm]{Example}
\theoremstyle{remark}
\numberwithin{equation}{thm}
\def\Im{\operatorname{Im}}
\def\Ker{\operatorname{Ker}}
\def\RHom{\mathrm{{\bf R}Hom}}
\def\mod{\mathrm{mod}}
\def\rank{\mathrm{rank}}
\def\a{\mathrm a}
\def\m{\mathfrak m}
\def\n{\mathfrak n}
\newcommand{\rmr}{\mathrm{r}}
\newcommand{\rmK}{\mathrm{K}}
\newcommand{\calX}{\mathcal{X}}
\newcommand{\mapright}[1]{%
\smash{\mathop{%
\hbox to 1cm{\rightarrowfill}}\limits^{#1}}}
\newcommand{\mapleft}[1]{%
\smash{\mathop{%
\hbox to 1cm{\leftarrowfill}}\limits_{#1}}}
\title[The structure of Ulrich ideals in hypersurfaces]{The structure of Ulrich ideals in hypersurfaces}
\author{Ryotaro Isobe}
\thanks{Department of Mathematics and Informatics, Graduate School of Science and Engineering, Chiba University, Yayoi-cho 1-33, Inage-ku, Chiba, 263-8522, Japan}
\thanks{{\em E-mail address.} r.isobe.math@gmail.com}
\thanks{2010 {\em Mathematics Subject Classification.} 13D02, 13H10, 13H15.}
\thanks{{\em Key words and phrases.} Cohen-Macaulay ring, hypersurface ring, Ulrich ideal, Ulrich module, minimal free resolution, matrix factorization}
\begin{document}
\maketitle

\setlength{\baselineskip}{15pt}

\begin{abstract}
This paper studies Ulrich ideals in hypersurface rings. A characterization of Ulrich ideals is given. Using this characterization, we construct a minimal free resolution of an Ulrich ideal concretely. We also explore Ulrich ideals in a hypersurface ring of the form $R = k[[X, Y]]/(f)$.   
\end{abstract}



\section{Introduction}
The purpose of this paper is to investigate the structure and ubiquity of Ulrich ideals in a hypersurface ring.  

In a Cohen-Macaulay local ring $(R, \m)$, an $\m$-primary ideal $I$ is called an Ulrich ideal in $R$ if there exists a parameter ideal $Q$ of $R$ such that $I \supsetneq Q$, $I^2 = QI$, and $I/I^2$ is $R/I$-free. The notion of Ulrich ideal/module dates back to the work \cite{GOTWY1} in 2014, where S. Goto, K. Ozeki, R. Takahashi, K.-i. Watanabe, and K.-i. Yoshida introduced the notion, generalizing that of maximally generated maximal Cohen-Macaulay modules (\cite{BHU}), and started the basic theory. The maximal ideal of a Cohen-Macaulay local ring with minimal multiplicity is a typical example of Ulrich ideals, and the higher syzygy modules of Ulrich ideals are Ulrich modules. In \cite{GOTWY1, GOTWY2}, all Ulrich ideals of Gorenstein local rings of finite CM-representation type with dimension of at most 2 are determined by means of the classification in the representation theory. In \cite{GTT2}, S. Goto, R. Takahashi, and N. Taniguchi studied the structure of the complex $\RHom_R(R/I, R)$ for Ulrich ideals $I$ in a Cohen-Macaulay local ring of arbitrary dimension, and proved that in a one-dimensional non-Gorenstein almost Gorenstein local ring ($R, \m$), the only possible Ulrich ideal is the maximal ideal $\m$ (\cite[Theorem 2.14]{GTT2}). In contrast, in \cite{GIK}, S. Goto, the author, and S. Kumashiro closely explored the structure of chains of Ulrich ideals in a one-dimensional Cohen-Macaulay local ring. They studied the structure of the set $\calX_R$ of Ulrich ideals in $R$, and explored the ubiquity of those when $R$ is a generalized Gorenstein ring and $R$ has minimal multiplicity. Recently, S. Goto, the author, and N. Taniguchi \cite{GIT} explored Ulrich ideals in a one-dimensional 2-AGL ring.      

However, even for the case of hypersurface rings, there seems to be only scattered results known which give a complete list of Ulrich ideals, except the case of finite CM-representation type and the case of several numerical semigroup rings. Therefore, in the current paper, we focus our attention on a hypersurface ring which is not necessarily finite CM-representation type.

The main result of this paper is to give a characterization of Ulrich ideals for hypersurface rings of positive dimension. Let $(S, \n)$ be a regular local ring with $\dim S = d+1$ $($$d\ge1$$)$, and $f \in \n$ a non-zero element in $S$. We set $R = S/(f)$. For each $a\in S$, let $\overline{a}$ denote the image of $a$ in $R$. We denote by $\calX_R$ the set of Ulrich ideals in $R$. In a hypersurface ring $R$, every Ulrich ideal can be represented as an image of parameter ideals of $S$, as shown below.  

\begin{thm}$(${\rm Theorem \ref{2.2}}$)$
Suppose that $(S, \n)$ is a regular local ring with $\dim S = d+1$ $($$d\ge1$$)$ and $0 \neq f \in \n$. Set $R = S/(f)$. Then we have
$$\calX_R =
\left \{
(\overline{a_1}, \overline{a_2}, \cdots, \overline{a_d}, \overline{b}) \middle|\  
\parbox{27em}{\text{$a_1, a_2, \ldots, a_d, b\in \n$ be a system of parameters of $S$,}\\
\text{and there exist $x_1, x_2, \ldots, x_d \in (a_1, a_2, \cdots, a_d, b)$ and $\varepsilon \in U(S)$}\\
\text{such that $b^2 + \displaystyle\sum_{i=1}^d a_ix_i = \varepsilon f$.}}
\right \},
$$
where $U(S)$ denotes the set of unit elements of $S$.
\end{thm}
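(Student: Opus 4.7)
The plan is to prove the two inclusions separately.

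\emph{Direction $(\supseteq)$.} Given $a_1, \ldots, a_d, b \in \mathfrak n$ a system of parameters of $S$, together with $x_i \in (a_1, \ldots, a_d, b)$ and $\varepsilon \in U(S)$ satisfying $b^2 + \sum_{i=1}^d a_i x_i = \varepsilon f$, I would set $Q = (\overline{a_1}, \ldots, \overline{a_d})$ and $I = Q + (\overline b)$ in $R$, and verify the Ulrich axioms stepwise. Reducing the defining equation modulo $f$ yields $\overline b^{\,2} = -\sum \overline{a_i}\,\overline{x_i} \in QI$, which simultaneously gives $I^2 = QI$ and $\sqrt Q \supseteq I$. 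Thus $Q$ is $\mathfrak m$-primary; being generated by $d = \dim R$ elements in the Cohen-Macaulay ring $R$, it is a parameter ideal. The $\mathfrak m$-primariness of $I$ is inherited from the $\mathfrak n$-primariness of $(a_1, \ldots, a_d, b)$ in $S$. To exclude $\overline b \in Q$, I would lift such a relation to $b = \sum a_i y_i + hf$ and substitute into the defining equation to obtain $(\varepsilon - hb)\, f = \sum a_i(by_i + x_i)$; since $\varepsilon - hb$ is a unit, $f \in (a_1, \ldots, a_d)$, forcing $b \in (a_1, \ldots, a_d)$ and contradicting the s.o.p.\ hypothesis. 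For the freeness of $I/I^2$ over $R/I$, I would invoke the criterion from \cite{GOTWY1} that under $I^2 = QI$ this is equivalent to $Q :_R I = I$. The only nontrivial containment $Q : I \subseteq I$ proceeds as follows: for $\overline c \in Q : I$ with lift $c \in S$, write $cb = \sum a_i y_i + hf$; multiplying the defining equation by $h$ and eliminating $hf$ produces $b(hb - \varepsilon c) \in (a_1, \ldots, a_d)$. Since $a_1, \ldots, a_d, b$ is a regular sequence in the regular ring $S$, $b$ is a non-zero-divisor modulo $(a_1, \ldots, a_d)$, whence $\varepsilon c \in (a_1, \ldots, a_d, b)$ and $\overline c \in I$.

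\emph{Direction $(\subseteq)$.} Let $I \in \calX_R$ with parameter reduction $Q \subsetneq I$ satisfying $I^2 = QI$. The Gorensteinness of $R$ together with \cite{GOTWY1} yields a cyclic isomorphism $I/Q \cong R/I$ and in particular $\mu_R(I) = d + 1$, so one may write $I = Q + (\overline b)$ with $Q = (\overline{a_1}, \ldots, \overline{a_d})$ minimally generated. Lifting to $a_i, b \in \mathfrak n \subset S$, the condition $\overline b^{\,2} \in QI$ lifts to an equation $b^2 + \sum a_i x_i = g f$ in $S$ for some $x_i \in (a_1, \ldots, a_d, b)$ and $g \in S$.

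The main obstacle is to show, possibly after adjusting the lifts $a_i, b$ inside their prescribed cosets, that $g \in U(S)$. Once this is established, $f \in (a_1, \ldots, a_d, b)$, and the $\mathfrak n$-primariness of $(a_1, \ldots, a_d, b, f)$ (which holds since $I$ is $\mathfrak m$-primary) transfers to $(a_1, \ldots, a_d, b)$, making $a_1, \ldots, a_d, b$ a s.o.p.\ of $S$. My approach is to descend to the one-dimensional Cohen-Macaulay local ring $\overline S := S/(a_1, \ldots, a_d)$ (Cohen-Macaulay because $a_1, \ldots, a_d$ is a regular sequence in the regular ring $S$, an observation coming from $Q$ being a parameter ideal of $R$), in which the equation becomes $\overline b^{\,2} = \overline g\, \overline f$ with $\overline f$ a non-zero-divisor. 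Exploiting the Ulrich identity $Q :_R I = I$ and the length identity $\ell_R(R/Q) = 2\ell_R(R/I)$ coming from $I/Q \cong R/I$, one would first show that $\overline b$ is a non-zero-divisor in $\overline S$, and then combine the length decompositions $\ell_{\overline S}(\overline S/(\overline b^{\,2})) = 2\ell_{\overline S}(\overline S/(\overline b))$ and $\ell_{\overline S}(\overline S/(\overline b^{\,2})) = \ell_{\overline S}(\overline S/(\overline g)) + \ell_{\overline S}(\overline S/(\overline f))$ to force $\ell_{\overline S}(\overline S/(\overline g)) = 0$, i.e., $g \in U(S)$.
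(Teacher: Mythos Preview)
Your argument for the inclusion $(\supseteq)$ is correct and self-contained; where the paper (Proposition~\ref{2.1}) counts lengths to identify $I/Q$ with $R/I$, you instead verify $Q:_RI=I$ directly via the regularity of $a_1,\ldots,a_d,b$ in $S$, which is an equally valid route.

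The inclusion $(\subseteq)$, however, has a genuine gap. Your final length comparison needs two facts you do not establish: that $\overline b$ is a non-zero-divisor in $\overline S=S/(a_1,\ldots,a_d)$, and that $\ell_{\overline S}(\overline S/(\overline b))=\ell_R(R/I)$, i.e.\ $f\in(a_1,\ldots,a_d,b)$. Neither follows formally from the Ulrich identities $Q:_RI=I$ and $\ell_R(R/Q)=2\ell_R(R/I)$: if one localizes $\overline S$ at a minimal prime containing $\overline b$, then $\overline f$ becomes a unit and both identities reduce to $0=0$, so they cannot by themselves exclude such a prime. Likewise, lifting $\overline b^{\,2}\in QI$ only yields $gf\in(a_1,\ldots,a_d,b)^2$, not $f$ itself; and even granting that $\overline b$ is a non-zero-divisor, your displayed identity gives $\ell_{\overline S}(\overline S/(\overline g))=2\bigl[\ell_{\overline S}(\overline S/(\overline b))-\ell_{\overline S}(\overline S/(\overline b,\overline f))\bigr]$, which vanishes only once you know $\overline f\in(\overline b)$.

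The paper fills exactly this gap with a separate lemma (Lemma~\ref{2.3}), and the idea is specific to hypersurfaces: the $d$-th syzygy of $R/I$ admits a matrix factorization $(A,B)$ with $AB=fE_n$, and by \cite[Theorem~7.6]{GOTWY1} the entries of both $A$ and $B$ lie in $(a_1,\ldots,a_d,b)+(f)$; Nakayama then forces $f\in(a_1,\ldots,a_d,b)^2$. This single stroke yields simultaneously that $a_1,\ldots,a_d,b$ is a system of parameters of $S$ and an expression $f=\delta b^2+\sum a_iy_i$ with $y_i\in(a_1,\ldots,a_d,b)$, after which the length argument you outline goes through and gives $\delta\in U(S)$. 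You should incorporate this matrix-factorization step; without it the $(\subseteq)$ direction does not close.
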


This theorem enables us to make a complete list of Ulrich ideals in a hypersurface ring $R$ which is not necessarily finite CM-representation type.  

We now explain how this paper is organized. In Section 2, we will summarize a few results and basic properties of Ulrich ideals, which we subsequently need. The proof of Theorem \ref{2.2} will be given in Section 3. In Section 4, we construct a  minimal free resolution of Ulrich ideals in a hypersurface ring $R = S/(f)$. Because every Ulrich ideal $I$ is an image of some parameter ideal in a regular local ring $S$, the resolution of $R/I$ can be constructed by using Tate's construction (\cite[Theorem 4]{T}). However, in this section, we give another construction based on \cite{GOTWY1, GTT2} in which the structure of minimal free resolutions of Ulrich ideals was closely explored. We also give a matrix factorization of the $d$-th syzygy module of $R/I$, which is an Ulrich module with respect to $I$ (Corollary \ref{3.5}). 
 In Section 5, we consider the structure of decomposable Ulrich ideals. We characterize decomposable 2-generated Ulrich ideals in a one-dimensional Cohen-Macaulay local ring, and determine all of these in a hypersurface ring.   
 In the last section, we focus our attention on the case of $S = k[[X, Y]]$ which is the formal power series ring over a field $k$. The purpose of this section is to make a complete list of Ulrich ideals in $R$ which is not finite CM-representation type. We give the list for the case of $f = Y^k$ and  $f = X^kY$ (Proposition \ref{5.1}, Theorem \ref{4.6}, Corollary \ref{4.8}, Theorem \ref{4.10}, Theorem \ref{4.13}, and Theorem \ref{4.17}).

Throughout this paper, let $\rmr(R)$ denote the Cohen-Macaulay type of $R$, and $\mu_R(M)$ (resp. $\ell_R(M)$) denote the number of elements in a minimal system of generators of $M$ (resp. the length of $M$), for a finitely generated $R$-module $M$. We denote by $\calX_R$ the set of Ulrich ideals in $R$.

\section{Basic facts}
Let us summarize a few results and basic properties of Ulrich ideals. We begin with the definition of Ulrich ideals. Although we focus our attention on the case of hypersurface rings, we would like to state the definition in the case of arbitrary Cohen-Macaulay local rings. Let $(R, \m)$ be a Cohen-Macaulay local ring with $\dim R = d \ge0$, and $I$ an $\m$-primary ideal of $R$. We assume that $I$ contains a parameter ideal $Q$ of $R$ as a reduction.
\begin{definition}(\cite[Definition 1.1]{GOTWY1})\label{1.1} 
We say that $I$ is an {\it Ulrich ideal} in $R$, if the following conditions are satisfied. 
\begin{enumerate}[$(1)$]
\item
$I \neq Q$, but $I^2 = QI$.
\item
$I/I^2$ is a free $R/I$-module.
\end{enumerate}
\end{definition}

In Definition \ref{1.1}, Condition $(1)$ is equivalent to saying that the associated graded ring $gr_I(R) = \oplus_{n\ge 0} I^n/I^{n+1}$ is a Cohen-Macaulay ring with $a(gr_I(R)) = 1-d$, where $a(gr_I(R))$ denotes the a-invariant of $gr_I(R)$ (\cite[Remark 3.10]{GS}, \cite[Remark 3.1.6]{GW}). Therefore, Condition $(1)$ is independent of the choice of reductions $Q$ of $I$. In addition, Condition $(2)$ is equivalent to saying that $I/Q$ is a free $R/I$-module, provided Condition $(1)$ is satisfied (\cite[Lemma 2.3]{GOTWY1}). If $I = \m$, then Condition $(2)$ is automatically satisfied. Hence, when the residue class field $R/\m$ of $R$ is infinite, the maximal ideal $\m$ is an Ulrich ideal if and only if $R$ is not a regular local ring, possessing minimal multiplicity (\cite{S}).     


For a finitely generated $R$-module $M$, we denote by  G-$\dim_RM$ the Gorenstein dimension (G-dimension for short) of $M$. With this notation, we then have the following.

\begin{thm}$($\cite[Theorem 7.1, Theorem 7.6]{GOTWY1}, \cite[Theorem 2.5, Theorem 2.8]{GTT2}$)$ \label{1.2}
Let $I$ be an Ulrich ideal in a Cohen-Macaulay local ring $R$, and set $n = \mu_R(I)$. Let
$$
\cdots \to F_i \overset{\partial_i}{\to} F_{i-1} \to \cdots \to F_1 \overset{\partial_1}{\to} F_0 = R \to R/I \to 0
$$
be a minimal free resolution of $R/I$. Then, setting $t = n-d$, the following assertions hold true.    
\begin{enumerate}[$(1)$]
\item
$t\cdot \rmr(R/I) = \rmr(R)$.
\item
$\mathbf{I}(\partial_i) = I$ for all $i \ge 1$.
\item
For $i \ge 0$, $\beta_i = 
\begin{cases}
t^{i-d}{\cdot}(t+1)^d & \ \ (i \ge d),\\
\binom{d}{i}+ t{\cdot}\beta_{i-1} & \ \ (1 \le i \le d),\\
1  & \ \  (i=0).
\end{cases}$
\item
$n = d+1$ if and only if G-$\dim_R R/I < \infty$.
\end{enumerate}
Here, $\mathbf{I}(\partial_i)$ denotes the ideal of $R$ generated by the entries of the matrix $\partial_i$, and $\beta_i = \rank_RF_i$. 
\end{thm}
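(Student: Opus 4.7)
The unifying tool is the short exact sequence
$$
0 \longrightarrow (R/I)^{t} \longrightarrow R/Q \longrightarrow R/I \longrightarrow 0 \qquad (\ast)
$$
with $t = n-d$, arising because the Ulrich hypothesis combined with \cite[Lemma 2.3]{GOTWY1} forces $I/Q$ to be $R/I$-free of rank $t$ (the rank being computed by counting minimal generators: $\mu_R(I) - \mu_R(Q) = n-d$). Since $Q$ is generated by a regular sequence in the Cohen-Macaulay ring $R$, the minimal free resolution of $R/Q$ is the Koszul complex, with Betti numbers $\binom{d}{i}$. These two ingredients supply the numerical and structural data we need.

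For part (1), take socles throughout $(\ast)$. Because $I/Q \cong (R/I)^{t}$ is annihilated precisely by $I/Q$, we have
$$
\ann_{R/Q}(I/Q) \;=\; \ann_{R/Q}(R/I) \;=\; I/Q,
$$
and every element of $\Soc(R/Q)$ is annihilated by $\m \supseteq I$, hence by $I/Q$; thus $\Soc(R/Q) \subseteq I/Q$ and consequently $\Soc(R/Q) = \Soc(I/Q) \cong \Soc(R/I)^{t}$. Comparing $k$-dimensions, and using invariance of Cohen-Macaulay type under quotient by a regular sequence ($\rmr(R/Q) = \rmr(R)$), yields $\rmr(R) = t \cdot \rmr(R/I)$.

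For part (3), tensor $(\ast)$ with the residue field $k$ and examine the long exact Tor sequence. The inclusion $(R/I)^{t} \hookrightarrow R/Q$ factors through $\m(R/Q)$, so one can choose a lift between minimal free resolutions whose components all have entries in $\m$; hence the induced map on $\Tor^{R}_{i}(k, -)$ vanishes in every degree. The long exact sequence then breaks into short exact pieces and gives the recursion
$$
\beta_{i}(R/I) \;=\; \binom{d}{i} \;+\; t \cdot \beta_{i-1}(R/I), \qquad \beta_{0}(R/I) = 1,
$$
and induction together with the binomial identity $\sum_{j=0}^{d}\binom{d}{j}\, t^{d-j} = (t+1)^{d}$ yields the closed forms of (3). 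For part (2), I would assemble $F_{\bullet}$ via a horseshoe/mapping-cone construction on $(\ast)$, combining the Koszul complex for $R/Q$ with shifted copies of $F_{\bullet}$ resolving $(R/I)^{t}$. The Koszul entries lie in $Q \subseteq I$, the entries from the shifted copies lie in $I$ by induction on $i$, and the connecting matrices can be chosen with entries in $I$; hence $\mathbf{I}(\partial_{i}) \subseteq I$. The reverse inclusion is immediate for $i = 1$ (definition of $n$), and for $i \ge 2$ follows by reducing modulo $Q$, where the $(R/I)^{t}$-component contributes back the minimal generators of $I/Q$ into the differential.

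For part (4), the ``if'' direction: when $n = d+1$, so $t = 1$, the Betti numbers $\beta_{i} = 2^{d}$ stabilize for $i \ge d$, and the tail of $F_{\bullet}$ acquires a $2$-periodic matrix-factorization-type structure which exhibits $\Omega^{d}(R/I)$ as totally reflexive, giving G-$\dim_{R} R/I = d < \infty$. Conversely, finite G-dimension combined with Auslander-Bridger forces G-$\dim_{R} R/I = d$, and then a Bass-type identity stemming from the total reflexivity of $\Omega^{d}(R/I)$ yields $\rmr(R/I) = \rmr(R)$; together with (1) this forces $t = 1$, i.e., $n = d+1$. The main obstacle is precisely this ``only if'' direction of (4): extracting the equality $\rmr(R/I) = \rmr(R)$ from the mere finiteness of G-dimension requires the full strength of the duality theory of totally reflexive modules as developed in \cite{GOTWY1, GTT2}.
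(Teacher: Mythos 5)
The paper itself offers no proof of Theorem \ref{1.2}: it is quoted from \cite{GOTWY1} and \cite{GTT2}, so your attempt can only be judged on its own terms (the closest internal analogue, Theorem \ref{3.2}, concerns the hypersurface case, where minimality of the constructed resolution is visible because every entry of the explicit matrices is a non-unit). Your exact sequence $(\ast)$ is the right starting point, and your argument for (1) is essentially complete and correct: elements of $\Soc(R/Q)$ are killed by $I$, hence lie in $(Q:_RI)/Q=\ann_{R/Q}(I/Q)=I/Q$, so $\Soc(R/Q)\cong\Soc(R/I)^{\oplus t}$ and $\rmr(R)=\rmr(R/Q)=t\cdot\rmr(R/I)$.

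The genuine gap is the claim on which (3) rests, and which (2) and the forward half of (4) silently presuppose: you assert that since $(R/I)^t\to R/Q$ has image $I/Q\subseteq\m(R/Q)$, a lift of it to minimal free resolutions can be chosen with all entries in $\m$, so that $\Tor_i^R(k,-)$ of the map vanishes for every $i$. This implication is false in general. A map $f\colon M\to N$ with $f(M)\subseteq\m N$ induces zero on $\Tor_0$ but not necessarily on higher Tor: over $R=k[[u]]$ the map $R/(u)\to R/(u^2)$, $\bar 1\mapsto \bar u$, has image $\m\cdot R/(u^2)$, yet any chain map lifting it between the minimal resolutions $0\to R\xrightarrow{u}R$ and $0\to R\xrightarrow{u^2}R$ must have degree-one component congruent to $1$ modulo $\m$ (lifts differ by homotopies, which only change it modulo $(u)$), and $\Tor_1(k,f)$ is an isomorphism $k\to k$. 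So ``image in $\m(R/Q)$'' buys you degree $0$ only; the vanishing in all degrees --- equivalently the minimality of the mapping-cone resolution, which is also what your unproven claim in (2) that the connecting matrices ``can be chosen with entries in $I$'' amounts to --- is precisely the nontrivial content of \cite[Section 7]{GOTWY1}, and its proof genuinely uses the Ulrich hypothesis (via the fact that the syzygies $\Omega^i_R(R/I)$ for $i\ge d$ are Ulrich modules), not merely $I/Q\subseteq\m(R/Q)$. Part (4) has the same character: constant Betti numbers $\beta_i=2^d$ for $i\ge d$ (when $t=1$) do not by themselves imply the tail of the resolution is periodic, nor that $\Omega^d_R(R/I)$ is totally reflexive, and for the converse you explicitly defer to the duality theory of \cite{GOTWY1, GTT2}, i.e.\ to the result being proved. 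In sum, (1) is proved; (2), (3), and (4) each hinge on an assertion that is unproven and, in the case of the Tor-vanishing step, false as stated.
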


Therefore, when $R$ is a Gorenstein ring, every Ulrich ideal $I$ is generated by $d+1$ elements, if it exists,  and $R/I$ has finite G-dimension but infinite projective dimension. 
Moreover, because $I/Q$ is a free $R/I$-module, we have $I = Q:_RI$, that is $I$ is a {\it good ideal} in the sense of \cite{GIW}. Similar to good ideals, Ulrich ideals are characteristic ideals, but behave very well in their nature (\cite{GOTWY1, GOTWY2}).
\section{Ulrich ideals in hypersurfaces}

In this section, we give a characterization of Ulrich ideals in a hypersurface ring. Firstly, let $(S, \n)$ be a Cohen-Macaulay local ring with $\dim S = d+1$ ($d\ge 1$), and $f\in \n$ a non-zero divisor on $S$. We set $R = S/(f)$ and $\m = \n/(f)$. For each $a\in S$, let $\overline{a}$ denote the image of $a$ in $R$, and $U(S)$ denote the set of unit elements of $S$. We then have the following.

\begin{prop}\label{2.1} 

Let $a_1, a_2, \ldots, a_d, b\in \n$ be a system of parameters of $S$. Suppose that there exist $x_1, x_2, \ldots, x_d \in (a_1, a_2, \cdots, a_d, b)$ and $\varepsilon \in U(S)$ such that 
$b^2 + \displaystyle\sum_{i=1}^d a_ix_i = \varepsilon f$. Then $I= (\overline{a_1}, \overline{a_2}, \cdots, \overline{a_d}, \overline{b}) \in \calX_R$.
 
\end{prop}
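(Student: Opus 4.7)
The natural candidate for the parameter-ideal reduction is $Q = (\overline{a_1}, \ldots, \overline{a_d})R$. My first step would be to check that $Q$ is a parameter ideal of $R$. The defining equation $b^2 + \sum_{i=1}^d a_i x_i = \varepsilon f$ immediately yields $b^2 \in (f, a_1, \ldots, a_d)S$, so $b$ is nilpotent in $S/(f, a_1, \ldots, a_d)$; coupled with the assumption that $a_1, \ldots, a_d, b$ is a system of parameters of $S$, this forces $\dim R/Q = \dim S/(f, a_1, \ldots, a_d) = 0$. Since $R = S/(f)$ is Cohen-Macaulay of dimension $d$, the elements $\overline{a_1}, \ldots, \overline{a_d}$ form a system of parameters, and hence a regular sequence, on $R$. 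The Cohen-Macaulayness of $S$ also guarantees that $b$ is a non-zerodivisor on $S/(a_1, \ldots, a_d)$; this will be the crucial technical input below.

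Next I would verify the reduction equality $I^2 = QI$. The only products of generators not obviously in $QI$ are of the form $\overline{b}^{\,2}$, but the defining equation, read modulo $f$, gives $\overline{b}^{\,2} = -\sum_{i=1}^d \overline{a_i}\,\overline{x_i}$. Since each $x_i$ lies in $(a_1, \ldots, a_d, b)$, every $\overline{x_i}$ lies in $I$, so $\overline{b}^{\,2} \in QI$ and therefore $I^2 = QI$. To see that the containment $Q \subsetneq I$ is strict I would argue by contradiction: if $\overline{b} \in Q$, then $b \in (a_1, \ldots, a_d)S + (f)S \subseteq (a_1, \ldots, a_d)S + (b^2)S$, whence a relation $b(1 - hb) \in (a_1, \ldots, a_d)S$ for some $h \in S$. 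Because $1 - hb$ is a unit in the local ring $S$, this forces $b \in (a_1, \ldots, a_d)S$, contradicting the fact that $a_1, \ldots, a_d, b$ is a system of parameters of $S$.

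It remains to show that $I/I^2$ is a free $R/I$-module, which by \cite[Lemma 2.3]{GOTWY1} (recalled in the introduction) is, under the reduction equality already obtained, equivalent to $I/Q$ being $R/I$-free. Since $I = Q + R\overline{b}$, the module $I/Q$ is cyclic with generator the image of $\overline{b}$, so freeness of rank one is equivalent to the annihilator equality $Q :_R \overline{b} = I$. The inclusion $I \subseteq Q :_R \overline{b}$ is immediate from the previous paragraph, since both $\overline{a_i}\,\overline{b}$ and $\overline{b}^{\,2}$ land in $Q$. For the reverse inclusion, I would take $c \in S$ with $cb \in (a_1, \ldots, a_d)S + (f)S$ and substitute $f = \varepsilon^{-1}(b^2 + \sum a_i x_i)$ to rewrite this as $b\,(c - h\varepsilon^{-1}b) \in (a_1, \ldots, a_d)S$ for some $h \in S$; the non-zerodivisor property of $b$ on $S/(a_1, \ldots, a_d)$ then yields $c - h\varepsilon^{-1}b \in (a_1, \ldots, a_d)S$, hence $c \in (a_1, \ldots, a_d, b)S$, i.e.\ $\overline{c} \in I$.

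The main obstacle, and the only step that is not a direct manipulation of the defining identity, is the inclusion $Q :_R \overline{b} \subseteq I$ in the last paragraph. It is precisely there that the Cohen-Macaulayness of $S$ enters in an essential way, by ensuring that $b$ is regular on $S/(a_1, \ldots, a_d)$; the other verifications (that $Q$ is a parameter ideal, that $I^2 = QI$, and that $Q \subsetneq I$) reduce to formal consequences of $b^2 + \sum a_i x_i = \varepsilon f$ and should go through without difficulty.
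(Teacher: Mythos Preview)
Your argument is correct. The paper's proof agrees with yours through the verification of $I^2 = QI$, but then establishes $I/Q \cong R/I$ by a length count rather than by computing the colon ideal: it observes
\[
\ell_R(R/Q) = \ell_S\bigl(S/(a_1,\ldots,a_d,f)\bigr) = \ell_S\bigl(S/(a_1,\ldots,a_d,b^2)\bigr) = 2\,\ell_S\bigl(S/(a_1,\ldots,a_d,b)\bigr) = 2\,\ell_R(R/I),
\]
and since $I/Q$ is a cyclic $R/I$-module this equality of lengths forces the surjection $R/I \twoheadrightarrow I/Q$ to be an isomorphism (and in particular $I \neq Q$). The two approaches rest on the same underlying fact---that $a_1,\ldots,a_d,b$ is a regular sequence in the Cohen--Macaulay ring $S$---but exploit it differently: the paper uses it to get the length identity $\ell_S(S/(a_1,\ldots,a_d,b^2)) = 2\,\ell_S(S/(a_1,\ldots,a_d,b))$, whereas you use it to cancel $b$ in $S/(a_1,\ldots,a_d)$ and read off $Q:_R\overline{b} = I$ directly. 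Your route is slightly more hands-on and makes the strict containment $Q \subsetneq I$ explicit; the paper's length argument is a bit slicker and handles that containment as a by-product.
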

\begin{proof}
Since $a_1, \ldots, a_d, b$ is a system of parameters of $S$, $I$ is an $\m$-primary ideal of $R$. Let $Q = (\overline{a_1}, \cdots, \overline{a_d})$. Then $\overline{b}^2 \in QI$, since $b^2 + \displaystyle\sum_{i=1}^d a_ix_i = \varepsilon f$, therefore $I^2 = QI$. It suffices to show that $I/Q \cong R/I$ (see \cite[Lemma 2.3]{GOTWY1}). Since $I/Q$ is a homomorphic image of $R/I$, it is enough to show that $\ell_R(R/I) = \ell_R(I/Q)$, which is equivalent to $\ell_R(R/Q) = 2\cdot \ell_R(R/I)$. In fact, we have
$$\ell_R(R/Q) = \ell_S(S/(a_1, \cdots, a_d, f)) = \ell_S(S/(a_1, \cdots, a_d, b^2)) = 2\cdot \ell_R(R/I),
$$ 
where the second equality follows from the relation $b^2 + \displaystyle\sum_{i=1}^d a_ix_i = \varepsilon f$, and the third equality follows from the assumption that $a_1, \ldots, a_d, b$ is a system of parameters of $S$.
\end{proof}
The converse of Proposition \ref{2.1} is also true if $S$ is a regular local ring. The following is the main result of this section.

\begin{thm}\label{2.2}
Suppose that $(S, \n)$ is a regular local ring. Then we have
$$\calX_R =
\left \{
(\overline{a_1}, \overline{a_2}, \cdots, \overline{a_d}, \overline{b}) \middle|\  
\parbox{27em}{\text{$a_1, a_2, \ldots, a_d, b\in \n$ be a system of parameters of $S$,}\\
\text{and there exist $x_1, x_2, \ldots, x_d \in (a_1, a_2, \cdots, a_d, b)$ and $\varepsilon \in U(S)$}\\
\text{such that $b^2 + \displaystyle\sum_{i=1}^d a_ix_i = \varepsilon f$.}}
\right \}.
$$

\end{thm}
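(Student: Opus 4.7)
The plan is to establish the inclusion $\calX_R \subseteq \{\cdots\}$; the reverse is Proposition~\ref{2.1}. Fix $I \in \calX_R$. Since $R = S/(f)$ is Gorenstein, Theorem~\ref{1.2}(4) gives $\mu_R(I) = d+1$, so I choose a minimal reduction $Q \subsetneq I$ and lift: $Q = (\overline{a_1},\ldots,\overline{a_d})$ with $a_i \in \n$ and $I = Q + (\overline{b})$ with $b \in \n$. The $\m$-primariness of $Q$ in the $d$-dimensional ring $R$ makes $(a_1,\ldots,a_d,f)$ $\n$-primary in the regular local ring $S$ of dimension $d+1$, so $a_1,\ldots,a_d,f$ is a regular sequence; in particular $a_1,\ldots,a_d$ is a regular sequence in $S$ and $A := S/(a_1,\ldots,a_d)$ is a one-dimensional Cohen-Macaulay local ring in which $\overline{f}$ is a non-zerodivisor. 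Lifting $I^2 = QI$ yields $b^2 \in (a_1,\ldots,a_d)(a_1,\ldots,a_d,b) + (f)$ in $S$, so $b^2 + \sum_{i=1}^{d} a_ix_i = \varepsilon f$ for some $x_i \in (a_1,\ldots,a_d,b)$ and $\varepsilon \in S$. It then suffices to prove $\varepsilon \in U(S)$: once this holds, $f = \varepsilon^{-1}(b^2 + \sum a_i x_i) \in (a_1,\ldots,a_d,b)$, so $(a_1,\ldots,a_d,b) \supseteq (a_1,\ldots,a_d,b,f)$ is $\n$-primary and $a_1,\ldots,a_d,b$ is a system of parameters of $S$.

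To verify $\varepsilon \in U(S)$, I work in $A$, where the relation becomes $\overline{b}^2 = \overline{\varepsilon}\,\overline{f}$. The Ulrich hypothesis supplies two further inputs in $A$: the length identity $\ell_A(A/\overline{f}A) = 2\,\ell_A(A/(\overline{b},\overline{f})A)$, coming from the isomorphism $I/Q \cong R/I$ (cf.\ the proof of Proposition~\ref{2.1}), and the colon identity $(\overline{f}):_A \overline{b} = (\overline{b},\overline{f})A$, coming from the goodness $Q:_R \overline{b} = I$. First I show $\overline{b}$ is a non-zerodivisor in $A$: any $\overline{c}$ with $\overline{b}\overline{c} = 0$ lies in $(\overline{b},\overline{f})$ by the colon identity, so writing $\overline{c} = \alpha\overline{b} + \beta\overline{f}$ and expanding $0 = \overline{b}\overline{c} = \overline{f}(\alpha\overline{\varepsilon} + \beta\overline{b})$, the non-zerodivisor status of $\overline{f}$ forces $\alpha\overline{\varepsilon} + \beta\overline{b} = 0$, from which $\overline{\varepsilon}\overline{c} = 0$ as well; a prime-avoidance argument over the finitely many minimal primes of $A$ (using that $(\overline{b},\overline{\varepsilon})$ would be contained in the proper annihilator of $\overline{c}$, while $\overline{f}$ is not) then precludes $\overline{c} \neq 0$. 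With $\overline{b}$ non-zerodivisor, $\ell_A(A/\overline{b}^2A) = 2\,\ell_A(A/\overline{b}A)$; similarly, the non-zerodivisor $\overline{f}$ gives $\ell_A(A/\overline{\varepsilon}\,\overline{f}A) = \ell_A(A/\overline{f}A) + \ell_A(A/\overline{\varepsilon}A)$. These two lengths agree via the identity $(a_1,\ldots,a_d,b^2) = (a_1,\ldots,a_d,\varepsilon f)$ in $S$; combined with the Ulrich length identity and the derived relation $\overline{b}(\overline{b},\overline{f}) = \overline{f}(\overline{b},\overline{\varepsilon})$ extracted from $\overline{b}^2 = \overline{\varepsilon}\,\overline{f}$, one pins down $\ell_A(A/\overline{\varepsilon}A) = 0$, so $\overline{\varepsilon}$ is a unit in $A$ and hence $\varepsilon \in U(S)$.

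The main obstacle is this length calculus in $A$, which may fail to be a discrete valuation ring (the $a_i$ form only a regular sequence in $S$ and need not extend to a regular system of parameters of $S$). Valuation-based reasoning is therefore unavailable, and the unit status of $\overline{\varepsilon}$ must be extracted by combining the Ulrich length identity and the colon identity in tandem with the relation $\overline{b}^2 = \overline{\varepsilon}\,\overline{f}$. The restriction $x_i \in (a_1,\ldots,a_d,b)$ in the statement—not merely $x_i \in S$—is precisely what is encoded by $I^2 = QI$ (as opposed to the weaker $\overline{b}^2 \in Q + (f)$) and is essential for closing the argument.
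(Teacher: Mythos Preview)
Your reduction to the one–dimensional ring $A=S/(a_1,\ldots,a_d)$ is the right idea, and the length identities you write down are exactly those the paper uses. The gap is the step where you claim $\overline{b}$ is a non-zerodivisor in $A$. From $\overline{b}\,\overline{c}=0$ you correctly deduce $\overline{c}\in(\overline{b},\overline{f})$ and then $\overline{\varepsilon}\,\overline{c}=0$, but the ``prime-avoidance'' conclusion does not follow: nothing prevents $(\overline{b},\overline{\varepsilon})$ from lying inside a single minimal prime of $A$ that avoids $\overline{f}$. Concretely, take $A=k[[s,t]]/(st)$, $\overline{f}=s+t$ (a non-zerodivisor), $\overline{b}=s$; then $\overline{b}^{2}=s^{2}=s(s+t)=\overline{\varepsilon}\,\overline{f}$ with $\overline{\varepsilon}=s$, the colon identity $(\overline{f}):_A\overline{b}=(\overline{b},\overline{f})$ and the Ulrich length identity both hold in $A/(\overline{f})\cong k[[s]]/(s^2)$, yet $\overline{b}$ is a zerodivisor ($\overline{c}=t$ works) and $\overline{\varepsilon}$ is not a unit. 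So at the level of $A$ alone, the hypotheses you isolate do \emph{not} force $\overline{b}$ to be regular; some further input from the ambient regular ring $S$ is required. Without $\overline{b}$ a non-zerodivisor, your subsequent length formulas $\ell_A(A/\overline{b}^{2}A)=2\,\ell_A(A/\overline{b}A)$ and the manipulation of $\overline{b}(\overline{b},\overline{f})=\overline{f}(\overline{b},\overline{\varepsilon})$ are not available, and the argument does not close.

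The paper supplies precisely this missing input via Lemma~\ref{2.3}: using that $R$ is a hypersurface, the $d$-th syzygy of $R/I$ admits a matrix factorization $AB=fE_n$ with $\mathbf{I}(\overline{A})=\mathbf{I}(\overline{B})=I$ by Theorem~\ref{1.2}(2), whence $f\in\mathbf{I}(A)\cdot\mathbf{I}(B)\subseteq(a_1,\ldots,a_d,b)^2+(f)\fkn$ and Nakayama gives $f\in(a_1,\ldots,a_d,b)^2$. This simultaneously shows $a_1,\ldots,a_d,b$ is a system of parameters (so $\overline{b}$ is a non-zerodivisor in $A$) and lets one write $f=\sum a_iy_i+\delta b^2$ with $y_i\in(a_1,\ldots,a_d,b)$; then the length computation is the single line $\ell_S(S/(a_1,\ldots,a_d,\delta b^2))=\ell_S(S/(a_1,\ldots,a_d,\delta))+2\,\ell_R(R/I)$, forcing $\delta\in U(S)$. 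You should either invoke Lemma~\ref{2.3} or find an independent reason, specific to $S$ regular, that your chosen lift $b$ is part of a system of parameters of $S$.
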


\vspace{1em}
In order to  prove Theorem \ref{2.2}, we need the following lemma learnt from Professor K.-i. Yoshida.

\begin{lem}\label{2.3}
Suppose that $S$ is a regular local ring. Assume that $a_1, a_2, \ldots, a_d, b \in \n$ and $(\overline{a_1}, \overline{a_2}, \cdots, \overline{a_d}, \overline{b}) \in \calX_R$.
Then $f\in (a_1, a_2, \cdots, a_d, b)^2$, and therefore $a_1, a_2, \ldots, a_d, b$ is a system of parameters of $S$.
\end{lem}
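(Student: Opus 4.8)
The plan is to encode the Ulrich condition as a single equation $w^2=\sum_i z_i c_i+\eta f$ in $S$ and then to prove that the coefficient $\eta$ is a unit; granting this, $f$ becomes visibly a sum of products of two elements of $\mathfrak{a}:=(a_1,\dots,a_d,b)$, so $f\in\mathfrak{a}^2$.

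First I would set up the reduction. Since $R=S/(f)$ is a hypersurface, hence Gorenstein, $R/I$ has finite G-dimension, so Theorem \ref{1.2} gives $\mu_R(I)=d+1$ and $t=1$; thus $R/I$ is Artinian Gorenstein, and with $L:=\ell_R(R/I)$ there is a parameter reduction $Q=(\overline{z_1},\dots,\overline{z_d})$ of $I$ with $I=Q+R\overline{w}$, $I/Q\cong R/I$, and $\ell_R(R/Q)=2L$. We may assume $f\in\n^2$, since otherwise $R$ is regular and then $\calX_R=\varnothing$ (by Theorem \ref{1.2} the resolution of $R/I$ would be infinite). Because $Q$ is a parameter ideal of $R$, the elements $z_1,\dots,z_d,f$ form a system of parameters, hence an $S$-regular sequence, and $\overline f$ is a nonzerodivisor on $T:=S/(z_1,\dots,z_d)$, a one-dimensional Cohen--Macaulay ring. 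From $I^2=QI$ one obtains $\overline{w}^2\in QI$, which lifts to
\[
w^2=\sum_{i=1}^d z_i c_i+\eta f \qquad (c_i\in\mathfrak{A},\ \eta\in S),
\]
where $\mathfrak{A}$ denotes the preimage of $I$ in $S$.

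The main obstacle is to control $\mathfrak{A}$, and in particular to show that $\overline{w}$ is \emph{also} a nonzerodivisor on $T$. For this I would invoke the change-of-rings formula $P^R_{R/I}(t)=P^S_{R/I}(t)/(1-t^2)$, valid because $f\in\n^2$ and $\pd_S(R/I)<\infty$; comparing the coefficients of $t$ yields $\mu_S(\mathfrak{A})=\beta_1=d+1$, whence $f\in\n\mathfrak{A}$. Since $\mathfrak{A}=\mathfrak{a}+(f)=(z_1,\dots,z_d,w)+(f)$, Nakayama's lemma gives $\mathfrak{A}=\mathfrak{a}=(z_1,\dots,z_d,w)$. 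This $\n$-primary ideal is generated by exactly $d+1=\dim S$ elements of the regular ring $S$, so its generators form a regular sequence; in particular $z_1,\dots,z_d,w$ is regular and $\overline{w}$ is a nonzerodivisor on $T$. (This already yields the closing assertion: $\mathfrak{a}=\mathfrak{A}$ is $\n$-primary and minimally generated by $d+1$ elements, so $a_1,\dots,a_d,b$ is a system of parameters of $S$.)

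Finally I would conclude by a length count in $T$. Reducing the displayed equation modulo $(z_1,\dots,z_d)$ gives $\overline{w}^2=\overline{\eta}\,\overline f$ in $T$. As $\overline{w}$ is a nonzerodivisor,
\[
\ell_T\!\big(T/(\overline{w}^2)\big)=2\,\ell_T\!\big(T/(\overline{w})\big)=2\,\ell_S(S/\mathfrak{A})=2L,
\]
while, $\overline f$ being a nonzerodivisor,
\[
\ell_T\!\big(T/(\overline{\eta}\,\overline f)\big)=\ell_T\!\big(T/(\overline{\eta})\big)+\ell_T\!\big(T/(\overline f)\big)=\ell_T\!\big(T/(\overline{\eta})\big)+\ell_R(R/Q)=\ell_T\!\big(T/(\overline{\eta})\big)+2L.
\]
Since $\overline{w}^2=\overline{\eta}\,\overline f$ define the same ideal, these lengths coincide, forcing $\ell_T(T/(\overline{\eta}))=0$, i.e.\ $\eta$ is a unit. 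Then $f=\eta^{-1}\big(w^2-\sum_i z_i c_i\big)$ lies in $\mathfrak{a}^2$, because $w,z_i,c_i\in\mathfrak{A}=\mathfrak{a}$, which completes the proof.
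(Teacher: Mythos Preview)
Your overall strategy—lift the relation $\overline w^2\in QI$ to $w^2=\sum z_ic_i+\eta f$ in $S$ and force $\eta$ to be a unit by a length count in $T=S/(z_1,\dots,z_d)$—is exactly the computation the paper carries out in the proof of Theorem~\ref{2.2}, and that part of your argument is fine. The difficulty is that this computation needs $z_1,\dots,z_d,w$ to be an $S$-regular sequence, which is equivalent to $\mu_S(\mathfrak A)=d+1$, i.e.\ to $f\in\n\mathfrak A$. Your justification for this step is the change-of-rings identity $P^R_{R/I}(t)=P^S_{R/I}(t)/(1-t^2)$, claimed to hold because $f\in\n^2$ and $\pd_S(R/I)<\infty$. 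That identity is \emph{not} valid under those hypotheses. Concretely, take $S=k[[x,y]]$, $f=xy\in\n^2$, and $\mathfrak A=\n^2=(x^2,xy,y^2)$; then $\pd_S(S/\mathfrak A)=2$, yet $\mu_S(\mathfrak A)=3$ while $\mu_R(\mathfrak A R)=\mu_R(x^2,y^2)=2$, so the degree-$1$ coefficients already disagree. More structurally, the Shamash resolution built from a minimal $S$-resolution is minimal in homological degree $2$ precisely when the first homotopy $s_0\colon S\to S^{\mu_S(\mathfrak A)}$ has entries in $\n$, and since $\partial_1 s_0=f$ this happens exactly when $f\in\n\mathfrak A$. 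Thus invoking the formula to deduce $f\in\n\mathfrak A$ is circular.

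This is the point where the paper does something genuinely different: rather than comparing Betti numbers, it passes to the $d$-th syzygy $M=\operatorname{Im}\overline{\partial_d}$, uses that $R$ is a hypersurface to get a matrix factorization $AB=BA=fE_n$ presenting $M$, and then appeals to Theorem~\ref{1.2}(2) (namely $\mathbf I(\overline A)=\mathbf I(\overline B)=I$) to force $f\in\mathbf I(A){\cdot}\mathbf I(B)\subseteq[(a_1,\dots,a_d,b)+(f)]^2$, whence $f\in(a_1,\dots,a_d,b)^2$ by Nakayama. Your length-count argument would go through cleanly once $\mathfrak A=(z_1,\dots,z_d,w)$ is known, but you need an independent reason for that equality; the Poincar\'e series route does not supply one.
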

\begin{proof}
Set $I = (\overline{a_1}, \overline{a_2}, \cdots, \overline{a_d}, \overline{b})$. We look at the minimal free resolution 
$$
{F} : \cdots \to {F}_i \overset{\partial_i}{\to} {F}_{i-1} \to \cdots \to {F}_1 \overset{\partial_1}{\to} {F}_0 = R \overset{\varepsilon}{\to} R/I \to 0
$$
of $R/I$ and set $M = \Im \partial_d$. Since $R = S/(f)$ is a hypersurface ring, there exist matrices $A, B \in M_n(S)$ such that 
$
0 \to S^{\oplus n} \overset{A}{\to} S^{\oplus n} \overset{\varepsilon}{\to} M \to 0\ \text{is exact as $S$-modules and}\ AB = BA = fE_n,
$ 
 where n = $\mu_R(M)$ and $E_n \in M_n(S)$ is a unit matrix. Whence 
 $\cdots \to R^{n} \overset{\overline{B}}{\to} R^n \overset{\overline{A}}{\to} R^n \overset{\overline{B}}{\to} R^n \overset{\overline{A}}{\to} R^n \overset{\varepsilon}{\to} M \to 0$
 is a minimal free resolution of $M$. Therefore, we have $\mathbf{I}(\overline{A}) = \mathbf{I}(\overline{B}) = I$ in $R$ by \cite[Theorem 7.6]{GOTWY1}, that is $\mathbf{I}(A) \subseteq (a_1, \cdots, a_d, b) + (f)$ and $\mathbf{I}(B) \subseteq (a_1, \cdots, a_d, b) + (f)$ in $S$, where $\mathbf{I}(*)$ denotes the ideal of $R$ generated by the entries of the matrix $*$. Since $AB = fE_n$, we get 
 $$
 f\in \mathbf{I}(A) \cdot \mathbf{I}(B) \subseteq [(a_1, \cdots, a_d, b) + (f)]^2 = (a_1, \cdots, a_d, b)^2 + f[(a_1, \cdots, a_d, b) + (f)],
 $$
thus $f\in (a_1, \cdots, a_d, b)^2$ by Nakayama's lemma.
\end{proof}

We are now ready to prove Theorem \ref{2.2}.

\begin{proof}[Proof of Theorem \ref{2.2}]
Thanks to Proposition \ref{2.1}, we have only to show the inclusion $($$\subseteq$$)$. Let $I \in \calX_R$. Since $\mu_R(I) = d+1$ by Theorem \ref{1.2} (1), we can choose $a_1, \ldots, a_d, b \in \n$ so that $I = (\overline{a_1}, \cdots, \overline{a_d}, \overline{b})$, and $I^2 = (\overline{a_1}, \cdots, \overline{a_d})I$. Then, by Lemma \ref{2.3}, $a_1, \ldots, a_d, b$ is a system of parameters of $S$ and $f\in (a_1, a_2, \cdots, a_d, b)^2$. We write $f = \sum_{i=1}^da_iy_i + \delta b^2$ with $y_1, \ldots, y_d \in (a_1, \cdots, a_d, b)$ and $\delta\in S$. We then have 
\begin{equation*}
\begin{split}
\ell_R(R/Q) & = \ell_S(S/(a_1,\cdots, a_d, f)) = \ell_S(S/(a_1,\cdots, a_d, \delta b^2))\\
& = \ell_S(S/(a_1, \cdots, a_d, \delta)) + 2\cdot \ell_R(R/I).
\end{split}
\end{equation*}
Because $I \in\calX_R$ and $\mu_R(I) = d+1$, we have $I/Q \cong R/I$, whence $\ell_R(R/Q) = 2\cdot \ell_R(R/I)$ (see the proof of Proposition \ref{2.1}). Therefore, $\ell_S(S/(a_1, \cdots, a_d, \delta)) = 0$, that is $\delta \in U(S)$. Setting $x_i = \delta^{-1} y_i$ $($$\in (a_1, \cdots, a_d, b)$$)$ and $\varepsilon = \delta^{-1}$, we get $b^2 + \displaystyle\sum_{i=1}^d a_ix_i = \varepsilon f$, which completes the proof of Theorem \ref{2.2}.
\end{proof}

The following is a direct consequence of Theorem \ref{2.2}, which gives many examples of Ulrich ideals.

\begin{cor}\label{2.4}
Suppose that $f = b^2$ for some $b\in \n$. Then, for any system of parameters $a_1, a_2, \ldots, a_d$ of $S/(b)$, we have $(\overline{a_1}, \overline{a_2}, \cdots, \overline{a_d}, \overline{b}) \in \calX_R$.
\end{cor}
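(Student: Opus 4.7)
The plan is to invoke Proposition \ref{2.1} (equivalently Theorem \ref{2.2}) directly with the tautological choice of witnesses $x_1 = x_2 = \cdots = x_d = 0$ and $\varepsilon = 1 \in U(S)$. With these choices, the defining identity
$$b^2 + \sum_{i=1}^d a_i x_i = \varepsilon f$$
collapses to $b^2 = f$, which is the standing hypothesis of the corollary. Consequently the algebraic content of the proof is vacuous, and the only substantive point is to check the hypothesis of Proposition \ref{2.1} that $a_1, a_2, \ldots, a_d, b$ actually forms a system of parameters of $S$.

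To verify this, first note that $f = b^2$ is a nonzero element of $\n$ (being a nonzerodivisor in the hypersurface setup), so $b \neq 0$. Because $S$ is regular and $b \in \n$ is a nonzerodivisor, we have $\dim S/(b) \le d$. The assumption that $a_1, \ldots, a_d$ is a system of parameters of $S/(b)$ then forces $\dim S/(b) = d$ and tells us that the image of $(a_1, \ldots, a_d, b)$ in $S/(b)$ is $\n/(b)$-primary. Lifting back, $(a_1, \ldots, a_d, b)$ is $\n$-primary in $S$, and since it is generated by $d+1 = \dim S$ elements it is indeed a system of parameters.

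With the system-of-parameters condition established and the key equation satisfied by the trivial witnesses, Proposition \ref{2.1} applies and yields $(\overline{a_1}, \overline{a_2}, \ldots, \overline{a_d}, \overline{b}) \in \calX_R$. There is no genuine obstacle here; the only thing requiring attention is the routine dimension-theoretic check that a system of parameters of $S/(b)$ extends to a system of parameters of $S$ upon adjoining $b$, and this follows from standard facts about nonzerodivisors in Cohen-Macaulay (in particular, regular) local rings.
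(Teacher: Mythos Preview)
Your proof is correct and follows exactly the paper's approach: the paper's entire proof is the single sentence ``We can put $x_i = 0$ and $\varepsilon = 1$,'' which is precisely your choice of witnesses for Proposition~\ref{2.1}. The additional dimension-theoretic verification you supply (that $a_1,\dots,a_d,b$ is a system of parameters of $S$) is a detail the paper leaves implicit, and your argument for it is fine.
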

\begin{proof}
We can put $x_i = 0$ and $\varepsilon = 1$.
\end{proof}

We will use Proposition \ref{2.1}, Theorem \ref{2.2}, and Corollary \ref{2.4} later in Section 6.

\section{Minimal free resolutions}
In this section, we construct a minimal free resolution of an Ulrich ideal $I$ which is obtained in Section 3. Because $I$ is an image of some parameter ideal, it is well known that this resolution can be constructed by using Tate's construction (\cite[Theorem 4]{T}). However, in this section, let us give another construction by using properties of Ulrich ideals. We begin with the following lemma.
\begin{lem}\label{3.1}
Suppose that $S$ is a commutative ring and $a_1,\ldots, a_d, x_1, \ldots, x_d \in S$ $($$d\ge1$$)$. We set
$K = {\rmK}_{\bullet}(a_1, \ldots, a_d;S) = (K_{\bullet}, \partial^{K}_{\bullet}) \ \text{and}\  L = {\rmK}_{\bullet}(x_1, \ldots, x_d;S) = (K_{\bullet}, \partial^{L}_{\bullet}) 
$
are Koszul complexes of $S$ generated by $a_1, \ldots, a_d$ and $x_1, \ldots, x_d$, and $c = \displaystyle\sum_{i = 1}^d a_ix_i$. Then
$$\partial^K_p \cdot {}^t\!{\partial^L_p} + {}^t\!{\partial^L_{p-1}} \cdot \partial^K_{p-1} = c\cdot id_{K_{p-1}}\ \text{for any}\ p \in \mathbb{Z},$$
where ${}^t\!*$ denotes the transpose of the matrix $*$.
\end{lem}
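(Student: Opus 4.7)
The plan is to identify $\partial^K_p$ and ${}^t\partial^L_p$ with familiar operations on the exterior algebra $\bigwedge^{\bullet} S^d$ and then to invoke the antiderivation property of contraction. Write $K_p = \bigwedge^p S^d$ with its standard basis $\{e_I\}_{|I|=p}$, and set $a = \sum_{i=1}^d a_i e_i$ and $x = \sum_{j=1}^d x_j e_j$, viewed as elements of $K_1$. Under this identification, $\partial^K_p$ coincides with the interior product $\iota_a$, i.e.\ the unique antiderivation of $\bigwedge^{\bullet} S^d$ of degree $-1$ satisfying $\iota_a(e_j) = a_j$.

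First I would verify that ${}^t\partial^L_p \colon K_{p-1} \to K_p$, computed in the standard bases, agrees with left exterior multiplication $\omega \mapsto x \wedge \omega$. This reduces to a sign match on basis vectors: for $J = \{j_1 < \cdots < j_{p-1}\}$ and $I = J \cup \{m\}$ with $m$ inserted in position $k$, the matrix entry of $\partial^L_p$ from $e_I$ to $e_J$ equals $(-1)^{k-1}x_m$, which is precisely the coefficient of $e_I$ in $x \wedge e_J = \sum_m x_m (e_m \wedge e_J)$ after reordering each $e_m \wedge e_J$ into increasing-index form.

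Once these identifications are in place, the rest is a one-line calculation. For any $\omega \in K_{p-1}$,
\[
\partial^K_p \circ {}^t\partial^L_p(\omega) = \iota_a(x \wedge \omega), \qquad {}^t\partial^L_{p-1} \circ \partial^K_{p-1}(\omega) = x \wedge \iota_a(\omega).
\]
Applying the Leibniz rule for the antiderivation $\iota_a$ to the degree-one element $x$,
\[
\iota_a(x \wedge \omega) = \iota_a(x)\,\omega - x \wedge \iota_a(\omega) = c\,\omega - x \wedge \iota_a(\omega),
\]
because $\iota_a(x) = \sum_i a_i x_i = c$. Adding the two displayed expressions yields $c \cdot \mathrm{id}_{K_{p-1}}$, as required.

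The only mildly delicate point I anticipate is the sign bookkeeping in the identification of ${}^t\partial^L_p$ with wedge-multiplication by $x$. If one prefers to sidestep the exterior-algebra formalism altogether, the whole identity can instead be checked directly on each basis vector $e_J \in K_{p-1}$: the two terms on the left-hand side produce a signed sum indexed by $m \notin J$ (for the first term) and $m \in J$ (for the second) which telescopes so that only the ``diagonal'' contributions $a_m x_m$ survive, giving $c\, e_J$.
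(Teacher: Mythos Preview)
Your argument is correct. The identification of $\partial^K_\bullet$ with the interior product $\iota_a$ is standard, and your sign check for ${}^t\partial^L_p = (x\wedge{-})$ matches the paper's explicit formula $[\partial^L_p]_{I,J} = (-1)^{\alpha+1}x_{j_\alpha}$ when $I = J\setminus\{j_\alpha\}$. Once those two identifications are in place, the Leibniz rule for the degree~$-1$ antiderivation $\iota_a$ applied to the degree~$1$ element $x$ gives the identity in one line, exactly as you wrote.

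The paper takes a genuinely different route: it computes the $(I_1,I_2)$-entry of $\partial^K_p\cdot{}^t\partial^L_p + {}^t\partial^L_{p-1}\cdot\partial^K_{p-1}$ directly for each pair $I_1,I_2\in F_{p-1}$, splitting into three cases according to whether $\sharp(I_1\cup I_2)\ge p+1$, $=p$, or $\le p-1$, and verifying in each case that the entry is $0$, $0$, or $c$ respectively (the last forcing $I_1=I_2$). This is precisely the ``sidestep'' you sketch in your final paragraph. Your exterior-algebra argument is substantially shorter and more conceptual, packaging all the sign bookkeeping into the antiderivation property; the paper's approach is more elementary in that it uses nothing beyond the definition of the Koszul matrices, at the cost of a lengthier case analysis. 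Either is a complete proof.
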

\begin{proof}
We may assume that $1\le p \le d+1$. If $p = 1$, 
$$
\partial_1^{K} = \left[
\begin{matrix}
a_1 & a_2 & \cdots & a_d
\end{matrix} \right],
{}^t\!{\partial_1^L} = \left[
\begin{matrix}
x_1\\
x_2\\
\vdots\\
x_d
\end{matrix}\right],
{}^t\!{\partial_0^L} = 0,
\text{and}\
\partial_0^K = 0,
$$
hence $\partial^K_1 \cdot {}^t\!{\partial^L_1} + {}^t\!{\partial^L_{0}} \cdot \partial^K_{0} = \partial^K_1 \cdot {}^t\!{\partial^L_1} = c$.
If $p = d+1$, 
$$
\partial_{d+1}^{K} = 0,
{}^t\!{\partial_{d+1}^L} = 0,
{}^t\!{\partial_d^L} = \left[
\begin{matrix}
x_1 & \cdots & (-1)^{i+1}x_i & \cdots & (-1)^{d+1}x_d
\end{matrix} \right],
\text{and}\
\partial_d^K = \left[
\begin{matrix}
a_1\\
\vdots\\
(-1)^{i+1}a_i\\
\vdots\\
(-1)^{d+1}a_d
\end{matrix}\right],
$$
hence $\partial^K_{d+1} \cdot {}^t\!{\partial^L_{d+1}} + {}^t\!{\partial^L_{d}} \cdot \partial^K_{d} = {}^t\!{\partial^L_d} \cdot \partial^K_d = c$.\\
We now assume that $2\le p \le d$. Set $K_1 = \sum_{i = 1}^d RT_i $, $\Lambda = \{ 1, 2, \cdots, d\}$, and $F_i = \{ I \subseteq \Lambda \mid \sharp I = i \}$ for $0 \le i \le d$. For $I = \{ j_1 < j_2 < \cdots < j_p\} \in F_p$, we denote $ T_{I} = T_{j_1} \wedge T_{j_2} \wedge \cdots \wedge T_{j_p}$. Then $K_p = \oplus_{I \in F_p} RT_I $, and the matrix $\partial_p^K$$($resp.  $\partial_p^L$$)$ has the following form
$$
[\partial_p^K]_{I, J}\ (\text{resp.}\ [\partial_p^L]_{I, J})=
\begin{cases}
0\ \ \ \text{if}\ I \nsubseteq J, \\
(-1)^{\alpha +1}a_{j_\alpha}\ (\text{resp.}\ (-1)^{\alpha +1}x_{j_\alpha})\ \ \ \text{if}\ \ 
\begin{split}
& I \subseteq J,\ J = \{ j_1< \cdots < j_p\},\\
& \text{and}\ I = J\setminus \{j_\alpha\},
\end{split}
\end{cases}
$$
for $I \in F_{p-1}$ and $J \in F_p$. Let us check the following. 
\begin{claim*}
For $I_1, I_2 \in F_{p-1}$, the following assertions hold true.
\begin{enumerate}[$(1)$]
\item
$\sharp (I_1\cup I_2) \ge p+1$ if and only if $\sharp (I_1 \cap I_2) \le p-3$.
\item
$\sharp (I_1\cup I_2) = p$ if and only if $\sharp (I_1 \cap I_2) = p-2$.
\item
$\sharp (I_1\cup I_2) \le p-1$ if and only if $\sharp (I_1 \cap I_2) \ge p-1$. When this is the case, $I_1 = I_2$.
\end{enumerate} 
\end{claim*}
\begin{proof}[Proof of Claim]
Focus on the number $\sharp (I_1\setminus I_2)$. $(1)$ is the case $\sharp (I_1\setminus I_2)\ge 2$, $(2)$ is $\sharp (I_1\setminus I_2) = 1$, otherwise $(3)$.
\end{proof}
It suffices to show that 
$$
[\partial^K_p \cdot {}^t\!{\partial^L_p} + {}^t\!{\partial^L_{p-1}} \cdot \partial^K_{p-1}]_{I_1, I_2} =
\begin{cases}
0\ \ \ \text{if  $\sharp(I_1 \cup I_2) \ge p+1$}\\
0\ \ \ \text{if  $\sharp(I_1 \cup I_2) = p $}\\
c\ \ \ \text{if  $\sharp(I_1 \cup I_2) \le p-1$}\\
\end{cases} 
$$
for any $I_1, I_2 \in F_{p-1}$ by Claim. We notice that 

\begin{equation*}
\begin{split}
[\partial^K_p \cdot {}^t\!{\partial^L_p} + {}^t\!{\partial^L_{p-1}} \cdot \partial^K_{p-1}]_{I_1, I_2} & = [\partial^K_p \cdot {}^t\!{\partial^L_p}]_{I_1, I_2} +[ {}^t\!{\partial^L_{p-1}} \cdot \partial^K_{p-1}]_{I_1, I_2}\\
& = \sum_{J \in F_p} [\partial_p^K]_{I_1, J} \cdot [\partial_p^L]_{I_2, J} + \sum_{J' \in F_{p-2}} [\partial_{p-1}^L]_{J', I_1} \cdot [\partial_{p-1}^K]_{J', I_2}\\
& = \sum_{ J \in F_p,\ I_1 \cup I_2\subseteq J} [\partial_p^K]_{I_1, J} \cdot [\partial_p^L]_{I_2, J} + \sum_{J' \in F_{p-2},\ J' \subseteq I_1\cap I_2} [\partial_{p-1}^L]_{J', I_1} \cdot [\partial_{p-1}^K]_{J', I_2}.
\end{split}
\end{equation*}

If  $\sharp(I_1 \cup I_2) \ge p+1$, then $\{ J \in F_p \mid I_1 \cup I_2 \subseteq J\} = \emptyset$ and $\{ J' \in F_{p-2} \mid J' \subseteq I_1 \cap I_2\} = \emptyset$ by Claim. Therefore
$[\partial^K_p \cdot {}^t\!{\partial^L_p} + {}^t\!{\partial^L_{p-1}} \cdot \partial^K_{p-1}]_{I_1, I_2}$ = 0.

If $\sharp(I_1 \cup I_2) = p$, we set $I_1 = \{ j_1< j_2< \cdots < j_{p-1}\}$ and $I_2 = \{ \ell_1< \ell_2< \cdots < \ell_{p-1}\}$, and take $j_\alpha \in I_1 \setminus I_2$ and $\ell_\beta \in I_2\setminus I_1$   
$($$1\le \alpha, \beta \le p-1$$)$. We then have
$$
\{ J \in F_p \mid I_1 \cup I_2 \subseteq J\} = \{ I_1\cup I_2\} = \{ I_1 \cup \{ \ell_\beta\}\} = \{ I_2 \cup \{ j_\alpha\}\},\ \text{and}
$$
$$
\{ J' \in F_{p-2} \mid J' \subseteq I_1 \cap I_2\} = \{ I_1\cap I_2\} = \{ I_1\setminus \{ j_\alpha\}\} = \{ I_2 \setminus \{ \ell_\beta\}\},
$$
hence we get 
\begin{equation*}
\begin{split}
[\partial^K_p \cdot {}^t\!{\partial^L_p}]_{I_1, I_2} & =
[\partial_p^K]_{I_1, I_1\cup \{ \ell_\beta\}} \cdot [\partial_p^L]_{I_2, I_2 \cup \{ j_\alpha\}}\\
& = 
\begin{cases}
(-1)^{\beta +1}a_{\ell_\beta}\cdot (-1)^{\alpha + 2}x_{j_\alpha} \ \ \ \text{if $j_\alpha > \ell_\beta$}\\
(-1)^{\beta +2}a_{\ell_\beta}\cdot (-1)^{\alpha + 1}x_{j_\alpha} \ \ \ \text{if $j_\alpha < \ell_\beta$} 
\end{cases}\\
& = (-1)^{\alpha + \beta + 1}a_{\ell_\beta}x_{j_\alpha},\ \text{and}
\end{split}
\end{equation*}
\begin{equation*}
\begin{split}
[ {}^t\!{\partial^L_{p-1}} \cdot \partial^K_{p-1}]_{I_1, I_2}& =
[\partial_{p-1}^L]_{I_1 \setminus \{ j_\alpha\}, I_1} \cdot [\partial_{p-1}^K]_{I_2 \setminus \{ \ell_\beta\}, I_2}\\
& = (-1)^{\alpha + 1} x_{j_\alpha} \cdot (-1)^{\beta +1} a_{\ell_\beta} \\
& = (-1)^{\alpha + \beta} a_{\ell_\beta} x_{j_\alpha}.
\end{split}
\end{equation*}
Therefore $[\partial^K_p \cdot {}^t\!{\partial^L_p} + {}^t\!{\partial^L_{p-1}} \cdot \partial^K_{p-1}]_{I_1, I_2}$ = 0. 

If $\sharp(I_1 \cup I_2) \le p-1$, then $I_1 = I_2$, whence
$$
\{ J \in F_p \mid I_1 \cup I_2 \subseteq J\} = \{ I_1 \cup \{ j\} \mid j \in \Lambda \setminus I_1\},\ \text{and}
$$
$$
\{ J' \in F_{p-2} \mid J' \subseteq I_1 \cap I_2\} = \{ I_1\setminus \{ j\} \mid j \in I_1\}.
$$
Hence we get
\begin{equation*}
\begin{split}
[\partial^K_p \cdot {}^t\!{\partial^L_p}]_{I_1, I_2} & = \sum_{j \in \Lambda \setminus I_1}[\partial_p^K]_{I_1, I_1 \cup \{ j\}} \cdot [\partial_p^L]_{I_1, I_1 \cup \{ j\}}\\
& = \sum_{j \in \Lambda \setminus I_1}a_jx_j, \ \text{and}
\end{split}
\end{equation*}
\begin{equation*}
\begin{split}
[ {}^t\!{\partial^L_{p-1}} \cdot \partial^K_{p-1}]_{I_1, I_2}& = \sum_{j \in I_1} [\partial_{p-1}^L]_{I_1 \setminus \{ j\}, I_1} \cdot [\partial_{p-1}^K]_{I_1 \setminus \{ j\}, I_1}\\
& = \sum_{j \in I_1} a_jx_j.
\end{split}
\end{equation*}
We then have
$[\partial^K_p \cdot {}^t\!{\partial^L_p} + {}^t\!{\partial^L_{p-1}} \cdot \partial^K_{p-1}]_{I_1, I_2} = \sum_{j \in \Lambda \setminus I_1}a_jx_j + \sum_{j \in I_1} a_jx_j = c$.
\end{proof}

In what follows, throughout this section, we assume that $(S, \n)$ is a Cohen-Macaulay local ring with $\dim S = d+1$ $($$d\ge1$$)$, and $f\in \n$ a non-zero divisor on $S$. We set $R=S/(f)$.
Let $a_1, \ldots, a_d, b \in \n$ be a system of parameters of $S$ so that $b^2 + \sum_{i = 1}^d a_ix_i = \varepsilon f$ with $x_1, \ldots, x_d \in (a_1, \cdots, a_d, b)$ and $\varepsilon \in U(S)$. Then $I = (\overline{a_1}, \overline{a_2}, \cdots, \overline{a_d}, \overline{b}) \in \calX_R$ with a reduction $Q = (\overline{a_1}, \overline{a_2}, \cdots, \overline{a_d})$ by Proposition \ref{2.1}. We notice that every Ulrich ideal in $R$ is this form, if $S$ is a regular local ring (Theorem \ref{2.2}). We also notice that $I/Q \cong R/I$.
By \cite[Corollary 7.2]{GOTWY1}, in the exact sequence $0 \to Q \overset{\iota}{\to} I \to R/I \to 0 $, the free resolution of $I$ induced from minimal free resolutions of $Q$ and $R/I$ is also minimal. We construct this resolution by using the relation $b^2 + \sum_{i = 1}^d a_ix_i = \varepsilon f$.
We set
$$K = {\rmK}_{\bullet}(a_1, \ldots, a_d;S) = (K_{\bullet}, \partial^{K}_{\bullet}) \ \text{and}\  L = {\rmK}_{\bullet}(x_1, \ldots, x_d;S) = (K_{\bullet}, \partial^{L}_{\bullet}) 
$$
are Koszul complexes of $S$ generated by $a_1, \ldots, a_d$ and $x_1, \ldots, x_d$. We define the sequence  $G = (G_{\bullet}, \partial_{\bullet})$ by $G_0 = K_0$, $G_i = K_i\oplus G_{i-1}= S^{\oplus \sum_{j=0}^i{\binom{d}{j}}}$ for $i\ge 1$, and
\vspace{0.5em}
$$
\partial_1 =\left[ 
\begin{array}{c|c}
\partial^{K}_1 & b
\end{array}\right], 
\partial_2 =\left[ 
\begin{array}{c|c}
\partial^{K}_2 & -bE_d \mid  {}^t\!{\partial^{L}_1}\\ \hline
O & \partial_1
\end{array}\right],\ \text{and}
$$
$$
\partial_i =\left[ 
\begin{array}{c|c}
\partial^{K}_i & (-1)^{i-1}bE_{\binom{d}{i-1}} \mid  {}^t\!{\partial^{L}_{i-1}} \mid O\\ \hline
O & \partial_{i-1}
\end{array}\right]\ 
\text{for $i\ge3$}.
$$
Then $\partial_i = \partial_{d+1}$ for any $i\ge d+1$.
Set $F = ({F}_{\bullet}, \overline{\partial_{\bullet}}) = ({G}_{\bullet} \otimes R, \partial_{\bullet} \otimes R)$.
We then have the following, which is the main result of this section.

\begin{thm}\label{3.2}
${F} : \cdots \to {F}_i \overset{\overline{\partial_i}}{\to} {F}_{i-1} \to \cdots \to {F}_1 \overset{\overline{\partial_1}}{\to} {F}_0 = R \overset{\varepsilon}{\to} R/I \to 0$ is a minimal free resolution of $R/I$.
\end{thm}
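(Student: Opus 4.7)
The plan is to verify, in order, that $F$ is a complex of free $R$-modules, that it is minimal, and that it is acyclic with $H_0(F)=R/I$. Minimality and the computation of $H_0(F)$ are essentially immediate: every entry in each block matrix $\partial_i$ is either $0$ or one of $\pm a_j,\ \pm b,\ \pm x_j$, all of which lie in $\n$, so every entry of $\overline{\partial_i}$ lies in $\m$; and since $\overline{\partial_1}=[\overline{a_1},\ldots,\overline{a_d},\overline{b}]$ has image $I$, we obtain $H_0(F)=R/I$.

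The main computation is showing $F$ is a complex, i.e., $\overline{\partial_i}\cdot\overline{\partial_{i+1}}=0$ in $R$. I would unwind the recursion in the definition of $\partial_i$ and observe that, viewing $G_i=\bigoplus_{m=0}^i K_m$, the only non-zero blocks of $\partial_i$ are of three kinds: a Koszul differential $\partial_m^K$ sending $K_m\to K_{m-1}$, a transposed Koszul block ${}^t\partial_m^L$ sending $K_{m-1}\to K_m$, and a scalar block $(-1)^m bE$ sending $K_m\to K_m$ (for the relevant ranges of $m$). Block-multiplying $\partial_i\,\partial_{i+1}$, the contributions to the $(K_k,K_j)$-block vanish unless $|k-j|\leq 2$, and the non-trivial contributions fall into the following classes:
\begin{itemize}
\item $|k-j|=2$: purely Koszul compositions $\partial_{k+1}^K\partial_{k+2}^K=0$ and ${}^t\partial_k^L\,{}^t\partial_{k-1}^L={}^t(\partial_{k-1}^L\partial_k^L)=0$.
\item $|k-j|=1$: sign-cancelling pieces of the form $(-1)^m bE\cdot\partial_{m+1}^K+\partial_{m+1}^K\cdot(-1)^{m+1}bE=0$, and analogously for ${}^t\partial^L$, which work because $bE$ is a scalar and commutes with any matrix.
\item $k=j$: a ``diagonal'' contribution at each level $K_{p-1}\to K_{p-1}$ equal to
\[
\partial_p^K\cdot{}^t\partial_p^L\ +\ {}^t\partial_{p-1}^L\cdot\partial_{p-1}^K\ +\ b^2\,E_{\binom{d}{p-1}},
\]
which by Lemma~\ref{3.1} equals $(c+b^2)\,E=\varepsilon f\cdot E$, and so vanishes modulo $f$.
\end{itemize}
This is where the relation $b^2+\sum a_ix_i=\varepsilon f$ is essential: Lemma~\ref{3.1} converts the otherwise non-trivial cross-term into a scalar multiple of $f$. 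This verification is routine but notationally dense, and is the main obstacle.

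For acyclicity, I would invoke the exact sequence $0\to Q\to I\to R/I\to 0$ (with $Q=(\overline{a_1},\ldots,\overline{a_d})$) together with the cited \cite[Corollary~7.2]{GOTWY1}, which gives that the minimal free resolution of $I$ is obtained as a mapping cone of the minimal resolutions of $Q$ and of $R/I$. Since $a_1,\ldots,a_d,b$ is a system of parameters of $S$ and $R$ is Cohen-Macaulay, the sequence $\overline{a_1},\ldots,\overline{a_d}$ is $R$-regular, so $Q$ is resolved minimally by the shifted Koszul complex $K_\bullet(\overline{a_1},\ldots,\overline{a_d};R)$. The Ulrich identification $I/Q\cong R/I$ realised by multiplication by $\overline{b}$ (which is well defined because $\overline{b}^2\in Q$) provides the connecting chain map, and the null-homotopy forced by $b^2=\varepsilon f-\sum a_ix_i$ produces precisely the correction blocks ${}^t\partial_{i-1}^L$ coming from the entries $\overline{x_j}$. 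Induction on $i$ identifies this mapping-cone resolution module-by-module and differential-by-differential with $F$. In particular, the ranks $\rank_R F_i=\sum_{j=0}^i\binom{d}{j}$ for $i\leq d$ and $2^d$ for $i\geq d$ agree with the Betti numbers predicted by Theorem~\ref{1.2}, confirming that $F$ is the minimal free resolution of $R/I$.
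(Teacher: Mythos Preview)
Your proposal is correct and follows the same strategy as the paper: verify that $F$ is a complex via block-multiplication, with Lemma~\ref{3.1} supplying the diagonal cancellation (the paper records this computation as Proposition~\ref{3.3}); note minimality from the entries lying in $\n$; and deduce acyclicity from the sequence $0\to Q\to I\to R/I\to 0$ together with the Koszul resolution of $R/Q$.

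The one point worth sharpening is the acyclicity step. As you phrase it, the mapping-cone argument takes a minimal free resolution of $R/I$ as one of its inputs, which is precisely what you are trying to build, so the reasoning is circular on its face. The paper sidesteps this by observing that $\overline{K}$ sits inside $F$ as a subcomplex with quotient complex isomorphic to $F$ shifted by one (this is the content of the two commutative diagrams displayed in the proof), and then the long exact sequence in homology yields $H_i(F)\cong H_{i-1}(F)$ for $i\ge 2$ together with $H_1(F)=0$ from the bottom diagram over $0\to Q\to I\to R/I\to 0$. Your ``induction on $i$'' is presumably meant to encode exactly this, but it should be stated as an exact sequence of complexes rather than as an appeal to a mapping cone that presupposes the answer.
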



To prove Theorem \ref{3.2}, we give the following proposition.

\begin{prop}\label{3.3}
Set $g = \varepsilon f$ $($$=b^2 + \sum_{i = 1}^d a_ix_i$$)$. Then
$$\partial_i \cdot \partial_{i+1} =\left[
\begin{array}{c|c}
O & gE_{\sum_{j=0}^{i-1}\binom{d}{j}}
\end{array}\right]\ 
\text{for any $i\ge 1$}.
$$
In particular, $\partial_{d+1}^2 = gE_{2^d}$.
\end{prop}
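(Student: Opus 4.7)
The plan is to prove the identity by induction on $i \ge 1$. For the base case $i=1$, a direct computation yields
\[
\partial_1 \cdot \partial_2
= [\partial^K_1 \mid b] \cdot \left[\begin{array}{c|c} \partial^K_2 & -bE_d \mid {}^t\!{\partial^L_1} \\ \hline O & \partial_1 \end{array}\right]
= [\,O \,\mid\, O \,\mid\, \partial^K_1 \cdot {}^t\!{\partial^L_1} + b^2\,],
\]
whose three output blocks are indexed by $G_2 = K_2 \oplus K_1 \oplus G_0$: the first vanishes by the Koszul relation $\partial^K_1\partial^K_2 = 0$, the second by the obvious cancellation of $\pm b\partial^K_1$, and the third equals $c + b^2 = g$ by Lemma \ref{3.1} with $p=1$, matching $[O \mid gE_{G_0}]$.

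For the inductive step, denote by $B_j = [(-1)^{j-1}bE_{\binom{d}{j-1}} \mid {}^t\!{\partial^L_{j-1}} \mid O]$ the upper right block of $\partial_j$, so that $\partial_j = \left(\begin{smallmatrix}\partial^K_j & B_j \\ O & \partial_{j-1}\end{smallmatrix}\right)$. Block multiplication gives
\[
\partial_i \cdot \partial_{i+1} = \begin{pmatrix} \partial^K_i\partial^K_{i+1} & \partial^K_i B_{i+1} + B_i \partial_i \\ O & \partial_{i-1}\partial_i \end{pmatrix}.
\]
The $(1,1)$-block vanishes by the Koszul relation, and by the inductive hypothesis the $(2,2)$-block equals $[O \mid O \mid gE_{G_{i-2}}]$ on $G_i = K_i \oplus K_{i-1} \oplus G_{i-2}$, giving the identity on the $G_{i-2}$ part of $G_{i-1}$. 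What remains is the $(1,2)$-block.

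To compute $B_i \partial_i$, I would further unfold $\partial_i$ by viewing its output as $G_{i-1} = K_{i-1} \oplus K_{i-2} \oplus G_{i-3}$: the $K_{i-1}$-component of $\partial_i$ is $[\partial^K_i \mid B_i]$, while the $K_{i-2}$-component is $[O \mid \partial^K_{i-1} \mid B_{i-1}]$. Multiplying $B_i$ on the left, the contribution to $K_{i-3}$ becomes ${}^t\!{\partial^L_{i-1}} \cdot {}^t\!{\partial^L_{i-2}} = {}^t(\partial^L_{i-2}\partial^L_{i-1}) = 0$ by the Koszul relation for $x_1,\ldots,x_d$, and the mixed $K_{i-2}$-contribution cancels through $(-1)^{i-1}+(-1)^{i-2}=0$; one thus arrives at
\[
B_i\partial_i = [\,(-1)^{i-1}b\partial^K_i \,\mid\, b^2 E + {}^t\!{\partial^L_{i-1}}\partial^K_{i-1} \,\mid\, O\,]
\]
on $G_i = K_i \oplus K_{i-1} \oplus G_{i-2}$. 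Adding $\partial^K_i B_{i+1} = [(-1)^i b\partial^K_i \mid \partial^K_i \cdot {}^t\!{\partial^L_i} \mid O]$, the $K_i$-columns cancel by sign, the $G_{i-2}$-columns are zero, and the $K_{i-1}$-columns sum to $b^2 E + \partial^K_i{}^t\!{\partial^L_i} + {}^t\!{\partial^L_{i-1}}\partial^K_{i-1} = (b^2 + c)E = gE$ by Lemma \ref{3.1} with $p = i$. Combining all four blocks produces $\partial_i \cdot \partial_{i+1} = [O \mid gE_{G_{i-1}}]$.

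The main obstacle is the careful bookkeeping inside $B_i\partial_i$: the fact that $B_i$ reappears in $\partial_i$ itself produces the diagonal $b^2 E$ term, while the vanishing of the $K_{i-3}$-contribution is precisely the Koszul relation for the $x_j$'s. Lemma \ref{3.1} then packages $\partial^K_i{}^t\!{\partial^L_i} + {}^t\!{\partial^L_{i-1}}\partial^K_{i-1}$ into $cE$, so that the total coefficient $b^2 + c = g$ emerges on the $K_{i-1}$-block, completing the induction. The final identity $\partial_{d+1}^2 = gE_{2^d}$ follows at once since $G_d$ has rank $2^d$ and $\partial_i = \partial_{d+1}$ for $i \ge d+1$.
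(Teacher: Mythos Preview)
Your proof is correct and follows essentially the same route as the paper: induction on $i$, block multiplication of $\partial_i\partial_{i+1}$ using the recursive shape $\partial_j = \left(\begin{smallmatrix}\partial^K_j & B_j \\ O & \partial_{j-1}\end{smallmatrix}\right)$, and Lemma~\ref{3.1} to collapse $\partial^K_i\,{}^t\!\partial^L_i + {}^t\!\partial^L_{i-1}\partial^K_{i-1} + b^2E$ to $gE$. The only difference is cosmetic: the paper verifies $i=1,2,3$ by hand and begins the induction at $i\ge 4$, whereas you check only $i=1$ and run the induction from $i\ge 2$; your version is a bit more economical, and you make explicit the point that the $K_{i-3}$-contribution vanishes via ${}^t\!\partial^L_{i-1}\,{}^t\!\partial^L_{i-2} = {}^t(\partial^L_{i-2}\partial^L_{i-1}) = 0$, which the paper passes over silently.
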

\begin{proof}
By direct computation and Lemma \ref{3.1}, we have
\begin{equation*}
\begin{split}
\partial_1 \cdot \partial_2 & = \left[
\begin{array}{c|c}
\partial_1^K & b
\end{array} 
\right] \cdot
\left[
\begin{array}{c|c}
\partial_2^K & -bE_d \mid {}^t\!{\partial_1^L}\\ \hline
 O & \partial_1
\end{array} 
\right]\\
& = \left[
\begin{array}{c|c}
\partial_1^K & b
\end{array} 
\right] \cdot
\left[
\begin{array}{c|c|c}
\partial_2^K & -bE_d & {}^t\!{\partial_1^L}\\ \hline
 O & \partial_1^K & b
\end{array} 
\right]\\
& = \left[
\begin{array}{c|c|c}
O & O & \partial_1^K\cdot {}^t\!{\partial_1^L} + b^2
\end{array} 
\right] = \left[
\begin{array}{c|c}
O & g
\end{array} 
\right],
\end{split}
\end{equation*}

\begin{equation*}
\begin{split}
\partial_2 \cdot \partial_3 & = \left[
\begin{array}{c|c|c}
\partial_2^K & -bE_d & {}^t\!{\partial_1^L}\\ \hline
 O & \partial_1^K & b
\end{array} 
\right] \cdot
\left[
\begin{array}{c|c|c|c}
\partial_3^K & bE_{\binom{d}{2}} & {}^t\!{\partial_2^L} & O\\ \hline
O & \partial_2^K & -bE_d & {}^t\!{\partial_1^L} \\ \hline
O & O & \partial_1^K & b
\end{array} 
\right]\\
& = \left[
\begin{array}{c|c|c|c}
O & O & \partial^K_2 \cdot {}^t\!{\partial^L_2} + {}^t\!{\partial^L_{1}} \cdot \partial^K_{1} + b^2E_d & O\\ \hline
O & O & O & \partial_1^K \cdot {}^t\!{\partial_1^L} + b^2
\end{array} 
\right] = \left[
\begin{array}{c|c}
O & gE_d
\end{array} 
\right],
\end{split}
\end{equation*}

\begin{equation*}
\begin{split}
\partial_3 \cdot \partial_4 & = \left[
\begin{array}{c|c|c|c}
\partial_3^K & bE_{\binom{d}{2}} & {}^t\!{\partial_2^L} & O\\ \hline
O & \partial_2^K & -bE_d & {}^t\!{\partial_1^L} \\ \hline
O & O & \partial_1^K & b
\end{array} 
\right] \cdot
\left[
\begin{array}{c|c|c|c|c}
\partial_4^K & -bE_{\binom{d}{3}} & {}^t\!{\partial_3^L} & O & O\\ \hline
O & \partial_3^K & bE_{\binom{d}{2}} & {}^t\!{\partial_2^L} & O\\ \hline
O & O & \partial_2^K & -bE_d & {}^t\!{\partial_1^L} \\ \hline
O & O & O & \partial_1^K & b
\end{array} 
\right]\\
& = \left[
\begin{array}{c|c|c|c|c}
O & O & \partial_3^K \cdot {}^t\!{\partial_3^L} + {}^t\!{\partial_2^L} \cdot \partial_2^K + b^2E_{\binom{d}{2}} & O & O\\ \hline
O & O & O & \partial^K_2 \cdot {}^t\!{\partial^L_2} + {}^t\!{\partial^L_{1}} \cdot \partial^K_{1} + b^2E_d & O\\ \hline
O & O & O & O & \partial_1^K \cdot {}^t\!{\partial_1^L} + b^2
\end{array} 
\right]\\ 
& = \left[
\begin{array}{c|c}
O & gE_{\sum_{j = 0}^2 \binom{d}{j}}
\end{array} 
\right].
\end{split}
\end{equation*}
 Hence, we may assume that  $i \ge 4$ and our assertion holds true for $i-1$. 
Let $A_j = \left[ (-1)^{j-1} bE_{\binom{d}{j-1}} \mid {}^t\!{\partial_{j-1}^L} \mid O \right]$ for $j\ge 1$. Then

\begin{equation*}
\begin{split}
\partial_i \cdot \partial_{i+1} & = \left[
\begin{array}{c|c}
\partial_i^K & A_i\\ \hline
O & \partial_{i-1}
\end{array} 
\right] \cdot
\left[
\begin{array}{c|c}
\partial_{i+1}^K & A_{i+1}\\ \hline
 O & \partial_i
\end{array} 
\right]\\
& = \left[
\begin{array}{c|c}
O & \partial_i^K \cdot A_{i+1} + A_i \cdot \partial_i\\ \hline
O & \partial_{i-1} \cdot \partial_i
\end{array} 
\right] = \left[
\begin{array}{c|c}
O & \partial_i^K \cdot A_{i+1} + A_i \cdot \partial_i\\ \hline
O & O \mid gE_{\sum_{j = 0}^{i-2}\binom{d}{j}}
\end{array} 
\right], \ \text{and}
\end{split}
\end{equation*}

\begin{equation*}
\begin{split}
\partial_i^K \cdot A_{i+1} + A_i \cdot \partial_i & = \left[ (-1)^i b \partial_i^K \mid \partial_i^K\cdot {}^t\!{\partial_i^L} \mid O \right]\\
& + \left[ (-1)^{i-1}bE_{\binom{d}{i-1}} \middle | {}^t\!{\partial_{i-1}^L} \middle | O \right] \cdot \left[
\begin{array}{c|c|c}
\partial_i^K & (-1)^{i-1}bE_{\binom{d}{i-1}} & {}^t\!{\partial_{i-1}^L} \mid O\\ \hline
O & \partial_{i-1}^K & (-1)^{i-2}bE_{\binom{d}{i-2}} \mid O\\ \hline
O & O & \partial_{i-2}
\end{array}
\right] \\
& = \left[ (-1)^ib\partial_i^K \mid \partial_i^K\cdot {}^t\!{\partial_i^L} \mid O \right] + \left[ (-1)^{i-1}b\partial_i^K \mid {}^t\!{\partial_{i-1}^L} \cdot \partial_{i-1}^K + b^2E_{\binom{d}{i-1}} \mid O \right]\\
& = \left[ O \middle | \partial_i^K\cdot {}^t\!{\partial_i^L} + {}^t\!{\partial_{i-1}^L} \cdot \partial_{i-1}^K + b^2E_{\binom{d}{i-1}} \middle | O \right] = \left[ O \middle | gE_{\binom{d}{i-1}} \middle | O \right],
\end{split}
\end{equation*}
by Lemma \ref{3.1}. Therefore $\partial_i \cdot \partial_{i+1} = \left[ O \middle | gE_{\sum_{j=0}^{i-1} \binom{d}{j}} \right]$, as desired. 
\end{proof}

We are now ready to prove Theorem \ref{3.2}.
\begin{proof}[Proof of Theorem \ref{3.2}]
Thanks to Proposition \ref{3.3}, $\overline{\partial_i} \cdot \overline{\partial_{i+1}} = 0$ for all $i\ge1$, hence $F$ is a complex. Let $Q = (\overline{a_1}, \cdots, \overline{a_d})$.
Then $\overline{K} = (\overline{K_{\bullet}}, \overline{\partial_{\bullet}^K}) = (K_{\bullet} \otimes R, \partial_{\bullet}^K \otimes R)$ is a minimal free resolution of $R/Q$, since $Q$ is a parameter ideal of $R$, and $\overline{K}$ is a subcomplex of $F$. In contrast, $0 \to Q \overset{\iota}{\to} I \to R/I \to 0$ is exact and the following diagrams 
$$
\xymatrix{
0 \ar[r] &  \overline{K_0} \ar[r]^{\iota}\ar[d] & F_1 \ar[r]\ar[d]^{\overline{\partial_1}} & F_0 \ar[r] \ar[d]^{\varepsilon} & 0 \\
0 \ar[r] & Q \ar[r]^{\iota} \ar[d] & I \ar[r] \ar[d]& R/I \ar[r] \ar[d]& 0\\
 & 0 & 0 & 0 &  
}
$$
 and
$$
\xymatrix{
0 \ar[r] &  \overline{K_{i+1}} \ar[r]^{\iota}\ar[d]^{\overline{\partial_{i+1}^K}} & F_{i+1} \ar[r]\ar[d]^{\overline{\partial_{i+1}}} & F_i \ar[r] \ar[d]^{\overline{\partial_{i}}} & 0 \\
0 \ar[r] & \overline{K_{i}} \ar[r]^{\iota} \ar[d]^{\overline{\partial_{i}^K}} & F_i \ar[r] \ar[d]^{\overline{\partial_{i}}}& F_{i-1} \ar[r] \ar[d]^{\overline{\partial_{i-1}}}& 0\\
0 \ar[r]& \overline{K_{i-1}} \ar[r]^{\iota} & F_{i-1} \ar[r] & F_{i-2} \ar[r] & 0 
}
$$
\vspace{1em}

\noindent are commutative for all $i \ge 2$. Therefore $F$ is exact, whence $F$ is a minimal free resolution of $R/I$, since every entry of $\partial_{\bullet}$ is not a unit. This completes the proof of Theorem \ref{3.2}.
\end{proof}
As a consequence, we get a matrix factorization of $d$-th syzygy module of $R/I$, which is an Ulrich module with respect to $I$ (see \cite[Definition 1.2]{GOTWY1}).

\begin{cor}\label{3.5}
Let $M = \Im \overline{\partial_d}$. Then 
$0 \to G_{d+1} \overset{\partial_{d+1}}{\to} G_{d} \overset{\tau}{\to} M \to 0$
is exact as $S$-modules and
$\partial_{d+1}^2 = gE_{2^d}$, where $\tau : G_{d} \overset{\varepsilon}{\to} F_{d} \overset{\overline{\partial_d}}{\to} M$.
Therefore $\partial_{d+1}$ gives a matrix factorization of $M$. 
\end{cor}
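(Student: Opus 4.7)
The plan is to combine the exactness of the resolution $F$ from Theorem \ref{3.2} with the key identity $\partial_{d+1}^2 = gE_{2^d}$ from Proposition \ref{3.3}; the identity itself is already contained in Proposition \ref{3.3}, so only the exactness of the $S$-module sequence needs real work. Observe first that $G_{d+1} = G_d = S^{\oplus 2^d}$ because $\binom{d}{d+1} = 0$, hence $\partial_{d+1}$ is a square matrix; also $g = \varepsilon f$ is a non-zero divisor on $S$.

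Surjectivity of $\tau$ is tautological, since $M = \Im \overline{\partial_d}$ by definition and the reduction map $G_d \to F_d$ is surjective. For injectivity of $\partial_{d+1}$ as an $S$-linear map, if $\partial_{d+1}(y) = 0$, then applying $\partial_{d+1}$ once more gives $gy = \partial_{d+1}^2(y) = 0$; since $f$ is a non-zero divisor on $S$ and $\varepsilon \in U(S)$, we conclude $y = 0$. The containment $\Im \partial_{d+1} \subseteq \Ker \tau$ is immediate from the fact that $F$ is a complex (Theorem \ref{3.2}).

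The decisive step is the reverse inclusion $\Ker\tau \subseteq \Im\partial_{d+1}$. Given $x \in \Ker\tau$, the element $\overline{x} \in F_d$ lies in $\Ker\overline{\partial_d} = \Im\overline{\partial_{d+1}}$ by exactness of $F$, so we can find $y \in G_{d+1}$ with $x - \partial_{d+1}(y) = fz$ for some $z \in G_d$. Here the matrix-factorization identity is essential: it yields
$$fz = \varepsilon^{-1} gz = \varepsilon^{-1}\partial_{d+1}^2(z) = \partial_{d+1}\bigl(\varepsilon^{-1}\partial_{d+1}(z)\bigr),$$
so $x = \partial_{d+1}\bigl(y + \varepsilon^{-1}\partial_{d+1}(z)\bigr) \in \Im\partial_{d+1}$. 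The final matrix-factorization assertion is then formal: setting $A = \partial_{d+1}$ and $B = \varepsilon^{-1}\partial_{d+1}$, one has $AB = BA = fE_{2^d}$, so $(A, B)$ is a matrix factorization of $f$, and the short exact sequence just established realizes $M$ as $\Coker A$.

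The one non-trivial ingredient is the absorption of the $f$-term at the end of the exactness argument: having pushed $x$ into the coset $\partial_{d+1}(y) + fG_d$ by lifting the $R$-level equality $\overline{\partial_d}(\overline{x}) = 0$, one must move the error $fz$ back into $\Im\partial_{d+1}$, and this is precisely what the relation $\partial_{d+1}^2 = gE_{2^d}$ with $g$ a unit multiple of $f$ allows. Everything else is routine bookkeeping based on Theorem \ref{3.2} and the fact that $f$ is a non-zero divisor on $S$.
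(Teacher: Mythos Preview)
Your proof is correct and follows essentially the same route as the paper: injectivity of $\partial_{d+1}$ from $\partial_{d+1}^2 = gE_{2^d}$ with $g$ a non-zero divisor, and exactness at $G_d$ by lifting along the $R$-resolution $F$ and absorbing the $f$-error via the same identity. The only cosmetic difference is that the paper writes the error term as $gz$ rather than $fz$, thereby avoiding the explicit $\varepsilon^{-1}$ you carry around; otherwise the arguments coincide.
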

\begin{proof}
Set $n = 2^d$.
Because $\partial_{d+1}^2 = gE_{n}$ (Proposition \ref{3.3}) and $g$ is a non-zero divisor on $S$, the map $ G_{d+1} \overset{\partial_{d+1}}{\to} G_{d} $ is injective. 
 It is clear that $\tau \circ \partial_{d+1} = 0$. Suppose that 
 \begin{equation*}
 \left( 
 \begin{matrix}
 x_1\\
  x_2\\
  \vdots\\
  x_n
 \end{matrix}
  \right) \in \Ker \tau.\
 \text{Then, since}\ \overline{\partial_d}\cdot 
   \left( 
 \begin{matrix}
 \overline{x_1}\\
  \overline{x_2}\\
  \vdots\\
  \overline{x_n}
 \end{matrix}
  \right) = 0\ \text{in $R$},\ 
    \left( 
 \begin{matrix}
 \overline{x_1}\\
  \overline{x_2}\\
  \vdots\\
  \overline{x_n}
 \end{matrix}
  \right) = 
\overline{\partial_d+1}\cdot 
   \left( 
 \begin{matrix}
 \overline{y_1}\\
  \overline{y_2}\\
  \vdots\\
  \overline{y_n}
 \end{matrix}
  \right)\
 \text{for some $y_i \in S$} 
\end{equation*}
by Theorem \ref{3.2}. Therefore
   \begin{equation*}
   \left( 
 \begin{matrix}
 x_1\\
 x_2\\
  \vdots\\
 x_n
 \end{matrix}
  \right)  = 
\partial_{d+1}\cdot 
   \left( 
 \begin{matrix}
 y_1\\
 y_2\\
  \vdots\\
 y_n
 \end{matrix}
  \right) + g\cdot
    \left( 
 \begin{matrix}
 z_1\\
 z_2\\
  \vdots\\
 z_n
 \end{matrix}
  \right)
   = 
\partial_{d+1}\cdot 
   \left( 
 \begin{matrix}
 y_1\\
 y_2\\
  \vdots\\
 y_n
 \end{matrix}
  \right) + \partial_{d+1}^2\cdot
    \left( 
 \begin{matrix}
 z_1\\
 z_2\\
  \vdots\\
 z_n
 \end{matrix}
  \right)
   =
  \partial_{d+1}\cdot 
  \left[
   \left( 
 \begin{matrix}
 y_1\\
 y_2\\
  \vdots\\
 y_n
 \end{matrix}
  \right) + \partial_{d+1}\cdot
    \left( 
 \begin{matrix}
 z_1\\
 z_2\\
  \vdots\\
 z_n
 \end{matrix}
  \right)
  \right],
\end{equation*}
for some $z_i \in S$. Consequently, $0 \to G_{d+1} \overset{\partial_{d+1}}{\to} G_d \overset{\tau}{\to} M \to 0$ is exact.
\end{proof}
We close this section with examples.
\begin{ex}\label{3.6}
\begin{enumerate}[$(1)$]
\item
If $d=1$, then
$$\partial_1 =\left[
\begin{matrix}
a_1 & b
\end{matrix}\right],\
\text{and}\
\partial_2 =\left[
\begin{matrix}
-b & x_1\\
a_1 & b
\end{matrix}\right].
$$
\item
If $d=2$, then
$$\partial_1 =\left[
\begin{matrix}
a_1 & a_2 & b
\end{matrix}\right],\
\partial_2 =\left[
\begin{matrix}
-a_2 & -b & 0 & x_1\\
a_1 & 0 & -b & x_2\\
0 & a_1 & a_2 & b
\end{matrix}\right],\
\text{and}\
\partial_3 =\left[
\begin{matrix}
b & -x_2 & x_1 & 0\\
-a_2 & -b & 0 & x_1\\
a_1 & 0 & -b & x_2\\
0 & a_1 & a_2 & b
\end{matrix}\right].
$$
\item
If $d=3$, then
$$\partial_1 =\left[
\begin{matrix}
a_1 & a_2 & a_3 &b
\end{matrix}\right],\
\partial_2 =\left[
\begin{matrix}
-a_2 & -a_3 & 0 & -b & 0 & 0 & x_1\\
a_1 & 0 & -a_3 & 0 & -b & 0 & x_2\\
0 & a_1 & a_2 & 0 & 0 & -b & x_3\\
0 & 0 & 0 & a_1 & a_2 & a_3 & b
\end{matrix}\right],
$$ 
$$
\partial_3 =\left[
\begin{matrix}
a_3 & b & 0 & 0 & -x_2 & x_1 & 0 & 0\\
-a_2 & 0 & b & 0 & -x_3 & 0 & x_1 & 0\\
a_1 & 0 & 0 & b & 0 & x_3 & x_2 & 0\\
0 & -a_2 & -a_3 & 0 & -b & 0 & 0 & x_1\\
0 & a_1 & 0 & -a_3 & 0 & -b & 0 & x_2\\
0 & 0 & a_1 & a_2 & 0 & 0 & -b & x_3\\
0 & 0 & 0 & 0 & a_1 & a_2 & a_3 & b
\end{matrix}\right],
$$
$$
\text{and}\
\partial_4 =\left[
\begin{matrix}
-b & x_3 & -x_2 & x_1 & 0 & 0 & 0 & 0\\
a_3 & b & 0 & 0 & -x_2 & x_1 & 0 & 0\\
-a_2 & 0 & b & 0 & -x_3 & 0 & x_1 & 0\\
a_1 & 0 & 0 & b & 0 & x_3 & x_2 & 0\\
0 & -a_2 & -a_3 & 0 & -b & 0 & 0 & x_1\\
0 & a_1 & 0 & -a_3 & 0 & -b & 0 & x_2\\
0 & 0 & a_1 & a_2 & 0 & 0 & -b & x_3\\
0 & 0 & 0 & 0 & a_1 & a_2 & a_3 & b
\end{matrix}\right].
$$
\end{enumerate}
\end{ex}

\section{Decomposable Ulrich ideals}

In this section, we explore the structure of decomposable Ulrich ideals. We begin with the following, which characterizes two-generated decomposable Ulrich ideals  in a one-dimensional Cohen-Macaulay local ring $R$.

\begin{prop}\label{4.1}
Suppose that $(R, \m)$ is a Cohen-Macaulay local ring with $\dim R = 1$. Let $I$ be an $\m$-primary ideal of $R$, and assume that $\mu_R(I) = 2$. Then the following conditions are equivalent.  
\begin{enumerate}[$(1)$]
\item
$I\in \calX_R$ and $I$ is decomposable. 
\item
There exist $a, b \in \m$ such that $I = (a, b)$, $(a) = (0):_Rb$, and $(b) = (0):_Ra$. 
\end{enumerate}
\end{prop}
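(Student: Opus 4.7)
My plan is to prove both implications using the single minimal reduction $Q := (a+b)$, together with the following common observations. Since $R$ is Cohen-Macaulay of dimension one and $I$ is $\m$-primary, $I$ contains a non-zerodivisor, so $0:_R I = 0$ and in particular $(0:_R a)\cap(0:_R b)=0$. Also, whenever $ab=0$ one has $I^{2}=(a^{2},b^{2})$ and $(a+b)(a,b)=(a^{2},b^{2})=I^{2}$, so $(a+b)$ is automatically a reduction of $I$ provided it is a non-zerodivisor.

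For (2)\,$\Rightarrow$\,(1), I would first note that the hypothesis gives $ab=0$, hence $(a)\cap(b)\subseteq(0:_R b)\cap(0:_R a)=0$, so $I=(a)\oplus(b)$ is decomposable. For the Ulrich condition I need $(a+b)$ to be a parameter ideal: if $(a+b)r=0$ then $ar=-br\in(a)\cap(b)=0$, forcing $r\in(0:_R a)\cap(0:_R b)=0$. Thus $Q=(a+b)$ is a parameter contained in $I$ with $I^{2}=QI$, and it suffices to verify $I/Q\cong R/I$; since $b\equiv-a\pmod Q$ the quotient is cyclic, and a colon computation (discussed below) yields $Q:_R I=(0:_R a)+(0:_R b)=(b)+(a)=I$.

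For (1)\,$\Rightarrow$\,(2), decomposability combined with $\mu_R(I)=2$ forces $I=(a)\oplus(b)$ for suitable $a,b\in\m$, so $(a)\cap(b)=0$, $ab=0$, and $(a)\subseteq 0:_R b$, $(b)\subseteq 0:_R a$. The same reasoning as above shows $a+b$ is a non-zerodivisor, so $Q=(a+b)$ is a minimal reduction with $I^{2}=QI$. Because the Ulrich condition is independent of the choice of reduction (Section 2), it forces $I/Q\cong R/I$, equivalently $Q:_R I=I$. The colon computation then gives $(0:_R a)+(0:_R b)=I$, and combined with $(0:_R a)\cap(0:_R b)=0$ we get an internal direct sum $I=(0:_R b)\oplus(0:_R a)$. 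Comparing this with $I=(a)\oplus(b)$ under the inclusions $(a)\subseteq 0:_R b$ and $(b)\subseteq 0:_R a$ then forces equality on each summand by the standard fact that two direct-sum decompositions $M=A\oplus B=A'\oplus B'$ with $A\subseteq A'$, $B\subseteq B'$ must coincide.

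The key lemma driving both directions is the colon computation $Q:_R I=(0:_R a)+(0:_R b)$, and this is the step I expect to be the main obstacle. Given $r$ with $ra\in Q$, writing $ra=s(a+b)$ produces $(r-s)a=sb\in(a)\cap(b)$, and one can separate the components only once $(a)\cap(b)=0$ is known, yielding $r-s\in 0:_R a$ and $s\in 0:_R b$. Thus the whole argument hinges on having decomposability $(a)\cap(b)=0$ in hand before the Ulrich condition can be translated into the annihilator identity—this is automatic under hypothesis~(2), but under hypothesis~(1) must first be extracted from the direct-sum decomposition of $I$.
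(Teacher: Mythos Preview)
Your proof is correct, and it takes a genuinely different route from the paper's.

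The paper works directly with the freeness of $I/I^{2}$. For $(1)\Rightarrow(2)$ it writes $I/I^{2}\cong (a)/(a^{2})\oplus(b)/(b^{2})\cong R/[(a)+(0{:}_{R}a)]\oplus R/[(b)+(0{:}_{R}b)]$ and compares this with $(R/I)^{\oplus 2}$ to get $I=(a)+(0{:}_{R}a)=(b)+(0{:}_{R}b)$; the argument is then finished via an auxiliary claim $(0{:}_{R}a^{2})=(0{:}_{R}a)$, proved using the non-zerodivisor $a+b$. For $(2)\Rightarrow(1)$ the paper again checks freeness of $I/I^{2}$ through the same decomposition. By contrast, you pivot everything on the single identity $Q{:}_{R}I=(0{:}_{R}a)+(0{:}_{R}b)$ for $Q=(a+b)$, valid once $(a)\cap(b)=0$, and translate the Ulrich condition as $Q{:}_{R}I=I$. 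This lets you run both implications with the same lemma and finish $(1)\Rightarrow(2)$ by the clean direct-sum comparison $I=(a)\oplus(b)=(0{:}_{R}b)\oplus(0{:}_{R}a)$ with $(a)\subseteq(0{:}_{R}b)$, $(b)\subseteq(0{:}_{R}a)$. Your approach is more symmetric and avoids the paper's separate claim about annihilators of squares; the paper's approach stays closer to the literal definition of Ulrich through $I/I^{2}$.
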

\begin{proof}
(1) $\Rightarrow$ (2)  Choose $a, b \in \m$ so that $I = (a) \oplus (b) = (a, b)$. Then $ab = 0$ and 
$$
I/I^2 \cong (a)/(a^2) \oplus (b)/(b^2)\cong R/[(a) + (0):_R a] \oplus R/[(b) + (0):_R b],
$$
while $I/I^2 \cong (R/I)^{\oplus 2}$, since $I \in \calX_R$ and $\mu_R(I) = 2$. Therefore, because $I = (a, b) \subseteq (a) + (0):_R a $ and $I \subseteq (b) + (0):_R b$, we get 
$I = (a) + (0):_R a = (b) + (0):_R b$.
In contrast, we have 
$$
I^2 = (a^2, b^2) = (a+b)I,
$$
hence $a+b$ is a non-zero divisor on $R$, since $\sqrt{I} = \m$. We also have the following. 

\begin{claim*}
$(0):_Ra^2 = (0):_Ra$ and $(0):_Rb^2 = (0):_Rb$.
\end{claim*}
\begin{proof}[Proof of Claim]
$(0):_Ra \subseteq (0):_Ra^2$ is clear. Let $x \in (0):_Ra^2$. Since $(a+b)ax=a^2x + abx = 0$ and $a+b$ is a non-zero divisor on $R$, we have $ax = 0$, which shows $(0):_Ra^2 = (0):_Ra$. Similarly, $(0):_Rb^2 = (0):_Rb$. 
\end{proof}
Let $x\in (0):_Ra$.
Because $x \in I = (a, b)$, we write $x = ax_1 + bx_2$ ($x_i \in R$). Then 
$$
0 = ax = a^2x_1 + abx_2 = a^2x_1,
$$
which shows that $x_1 \in (0):_Ra^2 = (0):_Ra$ by Claim. Consequently, we have $x = bx_2 \in (b)$, so that $(0):_Ra = (b)$. We also get $(0):_Rb = (a)$ as well.

(2) $\Rightarrow$ (1)     Because $ab = 0$, we have $I^2 = (a+b)I$. Hence $a+b$ is a non-zero divisor on $R$. Let $x \in (a) \cap (b)$. Then $(a+b)x = 0$, that is $x =0$. Therefore $I = (a) \oplus (b)$ and we have 
$$
 I/I^2 \cong (a)/(a^2) \oplus (b)/(b^2)\cong R/[(a) + (0):_R a] \oplus R/[(b) + (0):_R b] = R/I \oplus R/I,
$$
which shows that $I \in \calX_R$, as claimed.
\end{proof}

We now assume that $(S, \n)$ is a regular local ring with $\dim S = 2$, and let $0 \neq f \in \n$ and $R = S/(f)$. We then have the following, which determine all decomposable Ulrich ideals in a one-dimensional hypersurface ring.

\begin{thm}\label{4.2}
Assume that $f = p_1^{e_1}p_2^{e_2}\cdots p_{\ell}^{e_{\ell}}$ $($$\ell \ge 1, e_i\ge 1$$)$ where $p_1, p_2, \ldots, p_{\ell}$ are distinct prime elements of $S$.
Set $\Lambda = \{1, 2, \cdots, \ell \}$. For $\emptyset \neq J \subsetneq \Lambda$, we define $\alpha _J = \prod_{j\in J}p_j^{e_j}$ and $\beta_J = \prod_{j \in \Lambda \setminus J}p_j^{e_j}$. Then 
$$\{ I \in \calX_R \mid \text{$I$ is decomposable}\  \} =
\{ (\overline{\alpha_J}, \overline{\beta_J}) \mid \emptyset \neq J \subsetneq \Lambda \}.
$$
\end{thm}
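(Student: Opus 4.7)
My plan: for $(\supseteq)$, apply Proposition~\ref{4.1} directly; for $(\subseteq)$, reduce to a two-generator setup via Theorem~\ref{1.2} and Proposition~\ref{4.1}, then analyse the generators inside the UFD $S$. Fix $\emptyset \neq J \subsetneq \Lambda$ and set $a := \alpha_J$, $b := \beta_J$, so $ab = f$. The ideal $(a,b)$ is $\n$-primary because $\alpha_J, \beta_J$ are coprime in the two-dimensional UFD $S$ and form a regular sequence. Since $\beta_J$ is a nonzerodivisor in $S$, one has $s\beta_J \in (\alpha_J\beta_J) \Longleftrightarrow s \in (\alpha_J)$, which gives $(0):_R \overline{\beta_J} = (\overline{\alpha_J})$; symmetrically $(0):_R \overline{\alpha_J} = (\overline{\beta_J})$. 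Coprimality together with $J \neq \emptyset, \Lambda$ (so both $\alpha_J, \beta_J$ are non-units) forces $\mu_R(\overline{\alpha_J}, \overline{\beta_J}) = 2$, because $\overline{\alpha_J} \in (\overline{\beta_J})$ would yield $\beta_J \mid \alpha_J$ in $S$. Proposition~\ref{4.1} then concludes that $(\overline{\alpha_J}, \overline{\beta_J}) \in \calX_R$ is decomposable.

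For $(\subseteq)$, let $I \in \calX_R$ be decomposable. Theorem~\ref{1.2} gives $\mu_R(I) = d+1 = 2$, and Proposition~\ref{4.1} produces $a, b \in \n$ with $I = (\overline{a}, \overline{b})$, $(0):_R \overline{a} = (\overline{b})$ and $(0):_R \overline{b} = (\overline{a})$. Factor in the UFD: $a = u \prod_i p_i^{a_i} \cdot A$ and $b = v \prod_i p_i^{b_i} \cdot B$, with $u, v \in U(S)$ and $A, B$ products of primes coprime to $f$. I first show $A, B$ are units: if a prime $q \mid A$ is coprime to $f$, then $\overline{q}$ is a nonzerodivisor of $R$; writing $a = q\tilde a$, the equality $\overline{ab} = 0$ and the fact that $\overline{q}$ is a nzd yield $\overline{\tilde a}\,\overline{b} = 0$, so $\overline{\tilde a} \in (0):_R \overline{b} = (\overline{a})$. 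Unwinding, $\tilde a(1 - qr) \in (f)$ for some $r \in S$; since $1 - qr \in U(S)$, $\tilde a \in (f)$, whence $qf \mid a$, forcing $a_i \geq e_i$ for all $i$ and hence $\overline{a} = 0$ in $R$, contradicting $\mu_R(I) = 2$. With $A, B$ units, a direct valuation computation gives $(0):_R \overline{b} = (\overline{c})$, where $c := \prod_i p_i^{\max(0, e_i - b_i)}$ (and this is independent of $B$ because $B$ contributes no $p_i$-factor). Since $c \mid f$, the equation $(\overline{c}) = (\overline{a})$ is equivalent to $(a, f) = (c)$ in $S$; but $(a, f)$ is principal if and only if $a \mid f$ or $f \mid a$ (a standard $\gcd$ argument in the two-dimensional regular local ring), and $f \mid a$ would make $b$ a unit in $S$. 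So $a \mid f$, which means $a_i \leq e_i$ for all $i$ and $a_i = \max(0, e_i - b_i)$. Symmetrically $b_i \leq e_i$ and $b_i = \max(0, e_i - a_i)$, giving $a_i + b_i = e_i$ for every $i$.

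The main obstacle is ruling out the ``mixed'' case $0 < a_i, b_i < e_i$ at some index $i$. Here I invoke that Proposition~\ref{4.1} actually furnishes the decomposition $I = (\overline{a}) \oplus (\overline{b})$ (as established in its $(2) \Rightarrow (1)$ direction), so $(\overline{a}) \cap (\overline{b}) = 0$. If $\min(a_i, b_i) \geq 1$, set $w := ab/p_i \in S$; since $w = a \cdot (b/p_i) = (a/p_i) \cdot b$, the image $\overline{w}$ lies in $(\overline{a}) \cap (\overline{b})$. But the $p_i$-valuation of $w$ is $a_i + b_i - 1 = e_i - 1 < e_i$, so $f \nmid w$ and hence $\overline{w} \neq 0$, a contradiction. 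Therefore at every $i$ we have $(a_i, b_i) \in \{(0, e_i), (e_i, 0)\}$. Setting $J := \{i : a_i = e_i\}$, we obtain $a = u\alpha_J$ and $b = v\beta_J$; the requirement $a, b \in \n$ forces $\emptyset \neq J \subsetneq \Lambda$, and so $I = (\overline{\alpha_J}, \overline{\beta_J})$ as asserted.
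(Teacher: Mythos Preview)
Your argument is correct, but it takes a noticeably longer road than the paper's. For $(\supseteq)$ the paper applies Proposition~\ref{2.1} to $a=\alpha_J+\beta_J$, $b=\beta_J$ via the identity $-ab+b^2=-f$, whereas you verify the annihilator conditions of Proposition~\ref{4.1} directly; both are fine. The real divergence is in $(\subseteq)$. The paper simply writes $ab=\rho f$ and shows $\rho\in U(S)$ in one stroke: if a prime $p$ divides $\rho$, then (since $a,b$ are coprime by Lemma~\ref{2.3}) $p\mid a$, say $a=pa'$, whence $a'b\in(f)$ and $\overline{a'}\in(0):_R\overline b=(\overline a)=(\overline{pa'})$, contradicting $p\in\n$. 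Once $ab$ is associate to $f$, coprimality of $a,b$ immediately forces $a\sim\alpha_J$, $b\sim\beta_J$. Your route instead carries out a full prime-by-prime valuation analysis: eliminating extraneous prime factors of $a,b$, computing $(0):_R\overline b$ explicitly, deducing $a_i+b_i=e_i$, and then excluding the mixed case $0<a_i,b_i<e_i$ via $(\overline a)\cap(\overline b)=0$. This works, but note that your mixed-case step is unnecessary: Lemma~\ref{2.3} already gives that $a,b$ form a system of parameters of $S$, hence are coprime, so $\min(a_i,b_i)=0$ for every $i$ is immediate. The paper's argument buys brevity; yours makes the arithmetic structure more explicit but could be shortened by invoking coprimality earlier.
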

\begin{proof}
Suppose that $\emptyset \neq J \subsetneq \Lambda$, and set $a  = \alpha_{J} + \beta_{J}, $ $b = \beta_J$. Then $a, b$ is a system of parameters of $S$, since $\alpha_J, \beta_J$ is also a system of parameters of it, and we have
$$a^2\cdot0 + ab\cdot (-1) + b^2 = -\alpha_J\beta_J = -f.$$
Thus $(\overline{a}, \overline{b}) = (\overline{\alpha_J}, \overline{\beta_J}) \in \calX_R$ by Proposition \ref{2.1}, and  $(\overline{\alpha_J}, \overline{\beta_J}) = (\overline{\alpha_J})\oplus (\overline{\beta_J})$. 

Conversely, suppose that $I \in \calX_R$ and $I$ is decomposable. Then, because $R$ is a Gorenstein ring, $\mu_R(I) = 2$ by Theorem \ref{1.2}. We can choose $a, b \in \n$ so that $I = (\overline{a}, \overline{b})$, $(0):_R\overline{a} = (\overline{b})$, and $(0):_R\overline{b} = (\overline{a})$ by Proposition \ref{4.1}. Since $\overline{a}\overline{b}= 0$ in $R$, we write $ab = \rho f$ with $\rho \in S$. We note that $a, b$ are relatively prime because $a, b$ is a system of parameters of $S$ by Lemma \ref{2.3}. Therefore, it suffices to show that $\rho \in U(S)$. Assume that $\rho \in \n$. Then $\rho = p\rho'$ for some prime element $p$ of $S$ and $\rho' \in S$, hence $ab = p\rho'f \in (p)$, and we may assume that $a \in (p)$. Thus, writing $a = pa'$ with $a' \in S$, we get $a'b = \rho'f$, which means $\overline{a'} \in (0):_R\overline{b} = (\overline{a})$. This is impossible since $p \notin U(S)$.
\end{proof}

The following is a direct consequence of Theorem \ref{4.2}.

\begin{cor}\label{4.3}
Suppose that $R = k[[X, Y]]/(X^kY)$, where $k>0$ and $k[[X, Y]]$ is a formal power series ring over a field $k$. Then 
$$
\{ I \in \calX_R \mid I\ \text{is decomposable}\  \} = \{ (x^k, y) \}
$$
where $x, y$ denote the images of $X, Y$ in $R$.
\end{cor}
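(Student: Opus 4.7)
The plan is to apply Theorem \ref{4.2} directly to the factorization of $f$ in the regular local ring $S = k[[X,Y]]$. Since $S$ is a unique factorization domain and $X, Y \in \n = (X,Y)$ are irreducible, they are distinct (non-associate) prime elements of $S$. Thus the factorization
$$
f = X^k Y
$$
is exactly of the form assumed in Theorem \ref{4.2}, with $\ell = 2$, $p_1 = X$, $e_1 = k$, $p_2 = Y$, $e_2 = 1$.

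Next, I would enumerate the proper nonempty subsets of $\Lambda = \{1,2\}$. There are exactly two: $J = \{1\}$ and $J = \{2\}$. For $J = \{1\}$, one computes $\alpha_J = X^k$ and $\beta_J = Y$, giving the ideal $(\overline{X^k}, \overline{Y}) = (x^k, y)$. For $J = \{2\}$, one gets $\alpha_J = Y$ and $\beta_J = X^k$, yielding $(\overline{Y}, \overline{X^k}) = (x^k, y)$, the same ideal. Therefore the set on the right-hand side of Theorem \ref{4.2} collapses to the single element $\{(x^k, y)\}$, which is exactly the claimed description.

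There is no real obstacle here; the statement is a direct specialization of Theorem \ref{4.2} to the case of a monomial $f$ in two variables with only two distinct prime factors. The only point worth verifying explicitly is that $X$ and $Y$ are prime and non-associate in $k[[X,Y]]$, so that the hypothesis of Theorem \ref{4.2} applies, and that the two choices of $J$ produce the same ideal because swapping $\alpha_J$ and $\beta_J$ does not change the ideal they generate.
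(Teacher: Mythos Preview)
Your proposal is correct and is exactly the argument the paper has in mind: the corollary is stated as a direct consequence of Theorem \ref{4.2}, and your enumeration of the two nonempty proper subsets of $\{1,2\}$ (both yielding the ideal $(x^k,y)$) is precisely how that specialization goes through.
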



\section{The case where $R= k[[X, Y]]/(f)$}

In this section, let $S = k[[X, Y]] $ be a formal power series ring over a field $k$, and $R = S/(f)$ with $f\in \n = (X, Y)$. By using Theorem \ref{2.2} and Corollary \ref{4.3}, we explore the set $\calX_R$, when $f = Y^k$ or $X^{k-1}Y$$($$k\ge2$$)$.
Let $x, y$ denote the images of $X, Y$ in $R$.

Firstly, we assume that $f = Y^k$ and $k\ge2$. Let $I \in \calX_R$. Remember that $\mu_R(I) = 2$, since $R$ is a Gorenstein ring. 

\begin{prop}\label{4.4}
$I =(\overline{a}, \overline{b})$ and $I^2 = \overline{a}I$ for some $a = X^n + a_1Y$ and $b = b_1Y$, where $n>0$, $a_1, b_1\in S$. Therefore $Y^{k-1} \in (a, b)$.
\end{prop}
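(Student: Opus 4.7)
The plan is to begin with the representation supplied by Theorem \ref{2.2} and then rescale and translate both generators to the desired normal form, preserving both the reduction and the defining relation.

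Apply Theorem \ref{2.2} to write $I = (\overline{p}, \overline{q})$ with $p, q \in \n$ a system of parameters of $S$ and $q^2 + px = \varepsilon Y^k$ for some $x \in (p,q)$, $\varepsilon \in U(S)$; by (the proof of) Proposition \ref{2.1}, $(\overline{p})$ is a reduction, so $I^2 = \overline{p}I$. First I would show $p \notin (Y)$: otherwise the relation forces $q^2 \in (Y)$, hence $q \in (Y)$, and $(p,q) \subseteq (Y)$ contradicts that $(p,q)$ is $\n$-primary. Then $p(X,0) = u_0 X^n$ with $u_0 \in k[[X]]^{\times}$ and $n \geq 1$; set $a := u_0^{-1} p$, so $a(X,0) = X^n$ and $a = X^n + a_1 Y$ for some $a_1 \in S$. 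Since $u_0^{-1} \in U(S)$ we have $(\overline{a}) = (\overline{p})$ in $R$, so $I^2 = \overline{a} I$ and $I = (\overline{a}, \overline{q})$.

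Next I would push $q$ into $(Y)$. If $q \in (Y)$, take $b := q$. Otherwise $q(X,0) = v_0 X^{n'}$ with $v_0 \in k[[X]]^{\times}$ and $n' \geq 1$, and the main technical step is to establish $n' \geq n$. Reducing the Ulrich relation modulo $(Y)$ in $k[[X]]$ gives $v_0^2 X^{2n'} + u_0 X^n x(X,0) = 0$, whence $x(X,0) = -v_0^2 u_0^{-1} X^{2n'-n}$ has $X$-adic valuation exactly $2n'-n$; on the other hand $x \in (p,q)$ yields $x(X,0) \in (X^{\min(n,n')})$, so $2n'-n \geq \min(n,n')$, which rules out $n' < n$. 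Thus $n' \geq n$, and setting $c := v_0 X^{n'-n} \in k[[X]] \subset S$ one checks $(q - ca)(X,0) = 0$, so $b := q - ca \in (Y)$, say $b = b_1 Y$. Then $I = (\overline{a}, \overline{q}) = (\overline{a}, \overline{b})$ and $I^2 = \overline{a} I$ still holds.

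For the last assertion $Y^{k-1} \in (a,b)$, substitute $p = u_0 a$ and $q = b + ca$ into $q^2 + px = \varepsilon Y^k$ to obtain a relation of the same shape for the new generators, namely $b^2 + ax' = \varepsilon Y^k$ with $x' = 2bc + c^2 a + u_0 x \in (a,b)$. Rearranging with $b = b_1 Y$ gives $a x' = Y^2(\varepsilon Y^{k-2} - b_1^2)$. Because $a \notin (Y)$ and $S$ is a UFD, $a$ and $Y$ are coprime, so $a$ divides $\varepsilon Y^{k-2} - b_1^2$; equivalently, in $A := S/(a)$ we have $b_1^2 = \varepsilon Y^{k-2}$. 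Since $\varepsilon$ is a unit in $A$, this exhibits $Y^{k-2} = (\varepsilon^{-1} b_1) \cdot b_1 \in (b_1)A$, hence $Y^{k-1} \in Y(b_1) = (b)A$, and lifting back to $S$ yields $Y^{k-1} \in (a,b)$.

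The main difficulty is the valuation inequality $n' \geq n$: this is precisely what allows us to absorb $q$ into $(Y)$ using the reduction generator $a$, without having to swap the roles of $a$ and $b$ and risk losing the reduction property $I^2 = \overline{a}I$. Once this inequality is in hand, the normalization of $a$ into the form $X^n + a_1 Y$ and the deduction of $Y^{k-1} \in (a,b)$ from the factorization $b_1^2 = \varepsilon Y^{k-2}$ in $S/(a)$ are both elementary.
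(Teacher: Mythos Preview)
Your proof is correct and takes a genuinely different route from the paper. The paper works inside the total ring of fractions $Q(R)=K[Y]/(Y^k)$ (with $K=Q(k[[x]])$) and the integral closure $\overline{R}=k[[x]]+\sum_{i=1}^{k-1}Ky^i$: writing $A=I{:}I=R+R\tfrac{\beta}{\alpha}$, it subtracts off the $k[[x]]$-component of $\tfrac{\beta}{\alpha}$ so that $\tfrac{\beta}{\alpha}\in\sum_{i\ge 1}Ky^i$, whence $y^{k-1}\beta=0$; then $(\alpha):_R\beta=I$ gives $y^{k-1}\in I$ first, and the normal forms of $a,b$ are read off afterwards. You instead stay entirely in $S$, starting from the explicit relation $q^2+px=\varepsilon Y^k$ of Theorem~\ref{2.2}: reducing modulo $(Y)$ yields the valuation inequality $n'\ge n$, which lets you subtract a multiple of $a$ from $q$ to land in $(Y)$ without disturbing the reduction $(\overline{a})$, and the containment $Y^{k-1}\in(a,b)$ then falls out of the identity $b_1^2=\varepsilon Y^{k-2}$ in $S/(a)$. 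The paper's birational approach is more structural and reappears in later sections (e.g., for $f=X^kY$, where the endomorphism ring $A$ being local is used), whereas your argument is more elementary and self-contained, never leaving the regular ring $S$.
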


\begin{proof}
Let us write $I = (\alpha, \beta)$ with $I^2 = \alpha I$ $(\alpha, \beta \in R)$. We set $$A = I:I = \{ \varphi\in Q(R) \mid \varphi I\subseteq I \} \subseteq Q(R),$$
where $Q(R)$ denotes the total ring of fractions of $R$. Then $A = \frac{I}{\alpha} = R + R\frac{\beta}{\alpha}$, since $I^2 = \alpha I$. In contrast, let $D = k[[x]] \subseteq R$ and $K = Q(D)$. Then, since $A$ is a module finite birational extension of $R$ and $Q(R) = K[Y]/(Y^k)$, we have 
$$
R\subseteq A = R + R\frac{\beta}{\alpha} \subseteq \overline{R} = D + \sum_{i = 1}^{k-1} Ky^i, 
$$
where $\overline{R}$ denotes the integral closure of $R$ in $Q(R)$. Because $\frac{\beta}{\alpha} \in D + \sum_{i =1}^{k-1} Ky^i$, we write $\frac{\beta}{\alpha} = d + \rho$ with $d\in D$ and $\rho \in \sum_{i =1}^{k-1} Ky^i$. Therefore, since $\frac{\beta-\alpha d}{\alpha} = \frac{\beta}{\alpha} - d = \rho$ and $A = R + R\rho$, replacing $\beta$ with $\beta-\alpha d$, from the beginning we may assume that $\frac{\beta}{\alpha} \in \sum_{i =1}^{k-1} Ky^i$. Hence $y^{k-1}\beta = 0$, since $y^{k-1}\cdot \frac{\beta}{\alpha} = 0$ in $R$. Therefore, we have $y^{k-1} \in I$, because $(\alpha):_R \beta = I$ (remember that $I/(\alpha) \cong R/I$). Let $a, b\in S$ such that $\overline{a} = \alpha$, $\overline{b} = \beta$ in $R$. 
Then $a, b$ is a system of parameters of $S$ by Lemma \ref{2.3}.
Since $bY^{k-1} \in (Y^k)$ in $S$, we get $b \in (Y)$, and that $a \notin (Y)$. Consequently, we have that $a = \varepsilon X^n + a_1Y$ and $b = b_1Y$ with $n>0$, $a_1, b_1\in S$, and $\varepsilon \in U(S)$, and may assume $\varepsilon = 1$. We also have $Y^{k-1} \in (a, b)$, since $Y^{k-1} \in (a, b) + (Y^k)$.
\end{proof}

\begin{prop}\label{5.1}$($\cite[Example 4.8]{GIK}$)$
Suppose that $R = k[[X, Y]]/(Y^2)$. Then 
$$
\calX_R = \{ (x^{\ell}, y) \mid \ell>0\}.
$$
\end{prop}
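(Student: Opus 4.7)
The plan is to prove the two inclusions separately, invoking results already established earlier in the paper. Throughout let $\n = (X, Y)$ denote the maximal ideal of $S = k[[X, Y]]$.

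For the inclusion $\supseteq$, I would observe that $f = Y^2$ has the form $b^2$ with $b = Y \in \n$, and that $S/(Y) \cong k[[X]]$ is a discrete valuation ring. Hence every system of parameters of $S/(Y)$ is a single element of the form $\varepsilon X^\ell$ with $\varepsilon \in U(k[[X]])$ and $\ell > 0$. Corollary \ref{2.4} then yields $(\overline{\varepsilon X^\ell}, \overline{Y}) \in \calX_R$; since the image of $\varepsilon$ is a unit in $R$ as well, this ideal coincides with $(x^\ell, y)$.

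For the reverse inclusion, let $I \in \calX_R$. Applying Proposition \ref{4.4} with $k = 2$ furnishes an integer $n > 0$ and elements $a_1, b_1 \in S$ such that $I = (\overline{a}, \overline{b})$ for $a = X^n + a_1 Y$ and $b = b_1 Y$, with the additional property that $Y = Y^{k-1} \in (a, b)$ as an ideal of $S$. From $Y \in (a, b)$ I can solve $X^n = a - a_1 Y \in (a, b)$, so $(a, b) \supseteq (X^n, Y)$, and the reverse containment is immediate from the explicit shapes of $a$ and $b$. Therefore $(a, b) = (X^n, Y)$ as $S$-ideals, and taking images in $R$ I obtain $I = (\overline{a}, \overline{b}) = (x^n, y)$.

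No step presents a serious obstacle once Proposition \ref{4.4} is in hand: in the case $k = 2$ the forced containment $Y^{k-1} = Y \in (a, b)$ is already strong enough to pin the ambient $S$-ideal $(a, b)$ down to $(X^n, Y)$, leaving no residual freedom in the coefficients $a_1, b_1$. The only minor care needed is in the $\supseteq$ direction, where one verifies that the unit $\varepsilon$ appearing in a parameter of $k[[X]]$ is absorbed after passing to $R$, so that the ideals produced by Corollary \ref{2.4} are precisely the $(x^\ell, y)$.
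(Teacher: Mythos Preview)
Your proof is correct and follows essentially the same approach as the paper: both use Corollary \ref{2.4} for the inclusion $\supseteq$ and Proposition \ref{4.4} (with $k=2$, so that $Y^{k-1}=Y\in(a,b)$) for the reverse inclusion, concluding $(a,b)=(X^n,Y)$ in $S$. Your added remark about absorbing the unit $\varepsilon$ in the $\supseteq$ direction is harmless extra care; the paper simply applies Corollary \ref{2.4} directly to the parameter $X^\ell$.
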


\begin{proof}
Thanks to Corollary \ref{2.4}, $(x^{\ell}, y) \in \calX_R$ for any $\ell>0$. Conversely, suppose that $I \in \calX_R$. Then $I =(\overline{a}, \overline{b})$ for some $a = X^n + a_1Y$ and $b = b_1Y$ with $n>0$, $a_1, b_1\in S$, and $Y \in (a, b)$ by Proposition \ref{4.4}. Therefore, $(a, b) = (a, b, Y) = (X^n, Y)$, as desired.
\end{proof}

If $k$ is odd, we have the following family of Ulrich ideals.

\begin{prop}\label{4.5}
Suppose that $k = 2m +1$ $(m\ge1)$. Let $\ell>0$ and $\varepsilon \in U(S)$. We consider the ideal $I = (x^{2\ell} + \overline{\varepsilon}y, x^{\ell}y^m)$ of $R$. Then the following assertions hold true.
\begin{enumerate}[$(1)$]
\item
$I \in \calX_R$.
\item
Let $\ell'>0$, $\varepsilon'\in U(S)$ and suppose that $I = (x^{2\ell'} + \overline{\varepsilon'}y, x^{\ell'}y^m)$. Then $\ell = \ell'$ and $\varepsilon \equiv \varepsilon'$ $\mod$ $\n$.
\end{enumerate}
\end{prop}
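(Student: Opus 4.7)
For (1), my plan is to apply Proposition~\ref{2.1} directly with the choices
\[
a_{1}=X^{2\ell}+\varepsilon Y,\quad b=X^{\ell}Y^{m},\quad x_{1}=-Y^{2m},\quad \delta=-\varepsilon.
\]
The pair $a_{1},b$ forms a system of parameters of $S$: Hensel's lemma lets us solve $Y\equiv -\varepsilon^{-1}X^{2\ell}$ modulo $a_{1}$, so $S/(a_{1})\cong k[[X]]$, in which the image of $b$ is a unit multiple of $X^{\ell(2m+1)}$. The defining relation is a one-line expansion,
\[
b^{2}=X^{2\ell}Y^{2m}=(X^{2\ell}+\varepsilon Y)Y^{2m}-\varepsilon Y^{2m+1}=a_{1}Y^{2m}-\varepsilon f,
\]
so $b^{2}+a_{1}x_{1}=\delta f$. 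It remains to verify $x_{1}\in(a_{1},b)$; in the Artinian quotient $S/(a_{1},b)\cong k[[X]]/(X^{\ell k})$, the image of $Y^{2m}$ is a unit multiple of $X^{4m\ell}$, which vanishes exactly because $4m\ell\ge (2m+1)\ell=\ell k$ whenever $m\ge1$.

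For (2), I plan to pin down $\ell$ first by a colength computation. The same reduction used in (1) yields $R/I\cong k[[X]]/(X^{\ell k})$, so $\ell_{R}(R/I)=\ell k$. Applying this to both presentations gives $\ell k=\ell' k$ and hence $\ell=\ell'$.

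To extract the constant term of $\varepsilon$ I would then argue by contradiction: suppose $\varepsilon\not\equiv\varepsilon'\pmod{\n}$. Then $\varepsilon-\varepsilon'\in U(S)$, so $\overline{\varepsilon}-\overline{\varepsilon'}\in U(R)$. Subtracting the two leading generators gives
\[
(\overline{\varepsilon}-\overline{\varepsilon'})\,y\;=\;(x^{2\ell}+\overline{\varepsilon}\,y)-(x^{2\ell}+\overline{\varepsilon'}\,y)\;\in\;I,
\]
and dividing by the unit $\overline{\varepsilon}-\overline{\varepsilon'}$ forces $y\in I$. However, under $R/I\cong k[[X]]/(X^{\ell k})$ the class of $y$ equals a unit multiple of $x^{2\ell}$, which is nonzero precisely when $2\ell<\ell k$, i.e.\ $k>2$; since $k=2m+1\ge3$, this is a contradiction. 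Hence $\varepsilon\equiv\varepsilon'\pmod{\n}$.

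The sole nontrivial ingredient of the plan is the colength identification $R/I\cong k[[X]]/(X^{\ell k})$; once it is in hand, (1) is immediate from Proposition~\ref{2.1}, and (2) reduces to the short unit argument above.
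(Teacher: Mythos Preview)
Your proof is correct. For (1) you and the paper both apply Proposition~\ref{2.1} with $a=X^{2\ell}+\varepsilon Y$ and $b=X^{\ell}Y^{m}$; the only difference is cosmetic: the paper packages the auxiliary element as $x_1=a\varphi+b\psi$ with explicit $\varphi,\psi$, so that membership in $(a,b)$ is automatic, whereas you take the simpler $x_1=-Y^{2m}$ and verify $Y^{2m}\in(a,b)$ separately via the identification $S/(a,b)\cong k[[X]]/(X^{\ell k})$.

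For (2) there is a small but genuine difference in route. The paper first invokes Lemma~\ref{2.3} to lift the equality of ideals from $R$ up to $S$, then reads off $\ell=\ell'$ from colength in $S$ and extracts $\varepsilon\equiv\varepsilon'$ by writing $X^{2\ell}+\varepsilon Y$ explicitly as an $S$-combination of $X^{2\ell}+\varepsilon'Y$ and $X^{\ell}Y^{m}$. You stay entirely in $R$, using the concrete model $R/I\cong k[[X]]/(X^{\ell k})$ both to compute the colength and to see directly that $y\notin I$, which blocks $\varepsilon\not\equiv\varepsilon'$. Your argument avoids the appeal to Lemma~\ref{2.3} and is a bit more self-contained; the paper's argument, on the other hand, makes the role of the lift to $S$ explicit and would adapt more mechanically to situations where one does not have such a clean description of $R/I$.
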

\begin{proof}
(1) Let $a = X^{2\ell} + \varepsilon Y$ and $b = X^{\ell}Y^{m}$. Then $a, b$ is a system of parameters of $S$, and setting $\varphi = -\varepsilon^{-1}Y^{2m -1}$, $\psi = \varepsilon X^{\ell}Y^{m-1}$, and $\delta = - 1$, we have 
$$
a^2\varphi +ab\psi +b^2 = \delta Y^{2m +1}, 
$$
so that $I = (\overline{a}, \overline{b}) \in \calX_R$ by Proposition \ref{2.1}.

(2) Let $\ell, \ell' >0$, $\varepsilon, \varepsilon' \in U(S)$, and assume that 
$$
(x^{2\ell} + \overline{\varepsilon}y, x^{\ell}y^m) = (x^{2\ell'} + \overline{\varepsilon'}y, x^{\ell'}y^m).
$$ 
Then $(X^{2\ell} + \varepsilon Y, X^{\ell}Y^m) = (X^{2\ell'} + \varepsilon'Y, X^{\ell'}Y^m)$ by Lemma \ref{2.3}, hence we have $\ell = \ell'$ by comparing the colength of the ideals. We write $X^{2\ell} + \varepsilon Y = (X^{2\ell} + \varepsilon'Y)\xi + (X^{\ell}Y^m)\eta$ with $\xi, \eta\in S$. Then $X^{2\ell}(1 -\xi) = Y(-\varepsilon + \varepsilon'\xi + X^{\ell}Y^{m-1}\eta)$, whence 
$$
1- \xi = Y\rho \ \text{and}\ -\varepsilon + \varepsilon'\xi + X^{\ell}Y^{m-1}\eta = X^{2\ell}\rho
$$
for some $\rho \in S$. Therefore, $1\equiv \xi$ and $-\varepsilon + \varepsilon'\xi \equiv 0$ mod $\n$, that is $\varepsilon \equiv \varepsilon'$.
\end{proof}

As a consequence, we get the following. 

\begin{thm}\label{4.6}
Suppose that $R = k[[X, Y]]/(Y^3)$. Then
$$
\calX_R = \{ (x^{2\ell} + \overline{\varepsilon}y, x^{\ell}y) \mid \ell> 0, \varepsilon \in U(S)\}.
$$
\end{thm}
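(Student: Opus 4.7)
The inclusion $(\supseteq)$ is immediate from Proposition \ref{4.5} with $m = 1$. For the reverse inclusion, the plan is as follows. Let $I \in \calX_R$. Proposition \ref{4.4} lets me write $I = (\overline{a}, \overline{b})$ with $a = X^n + a_1 Y$ and $b = b_1 Y$ for some $n > 0$ and $a_1, b_1 \in S$, and Theorem \ref{2.2} supplies $x_1 \in (a, b)$ and $\varepsilon \in U(S)$ with $b^2 + a x_1 = \varepsilon Y^3$ in $S$.

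The heart of the argument is to extract a cleaner equation by repeated division by $Y$. Substituting $Y = 0$ gives $X^n x_1(X, 0) = 0$, whence $Y \mid x_1$; dividing the original equation by $Y$, substituting $Y = 0$ again, and iterating, I arrive at
\[
b_1^2 + (X^n + a_1 Y)\, z = \varepsilon Y \qquad \text{in } S
\]
for some $z \in S$. Evaluating this identity and its $Y$-derivative at the origin yields two crucial facts: $a_1(0,0)\, z(0,0) = \varepsilon(0,0) \neq 0$, so both $a_1$ and $z$ are units in $S$; and $b_1(X,0)^2 = -X^n\, z(X,0)$ with $z(X, 0)$ a unit of $k[[X]]$, which forces $n$ to be even. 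Writing $n = 2\ell$ and $b_1(X, 0) = X^{\ell} C'(X)$ with $C'(0) \neq 0$, I conclude that $a = X^{2\ell} + \varepsilon' Y$ with $\varepsilon' := a_1 \in U(S)$, already matching the first generator in the theorem.

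It remains to rewrite the second generator. Expanding $b_1 = X^{\ell} C'(X) + Y b_1'$ with $b_1' \in S$, a direct computation in $R$ gives
\[
\overline{b} \;-\; \overline{a}\cdot \overline{\varepsilon'^{-1} b_1'}\, y \;=\; x^{\ell}\, y \bigl(\overline{C'(X)} \;-\; x^{\ell}\, \overline{\varepsilon'^{-1} b_1'}\bigr),
\]
and the parenthesized factor $W$ reduces to $C'(0) \neq 0$ modulo $\m$, hence lies in $R^{\times}$. Therefore $I = (\overline{a}, \overline{b}) = (\overline{a},\, x^{\ell} y W) = (\overline{a},\, x^{\ell} y) = (x^{2\ell} + \overline{\varepsilon'}\, y,\, x^{\ell} y)$, completing the argument. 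I expect the main technical obstacle to be the repeated-division step producing the clean identity for $z$, together with the bookkeeping that forces $n$ to be even; once these are in place, the derivative evaluation at the origin and the final change of generators in $R$ are routine.
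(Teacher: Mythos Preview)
Your proof is correct, but it takes a genuinely different route from the paper's. The paper argues almost entirely by length counting: from $\ell_R(R/(\overline a)) = 3n = 2\,\ell_R(R/I)$ it first excludes $b_1 \in U(S)$ and $b_1 \in (Y)$, then uses $Y^2 \in (a,b)$ (from Proposition~\ref{4.4}) to reduce to $(a,b) = (X^n + a_1Y,\, X^\ell Y)$, proves $a_1 \in U(S)$ by reading the relation $a^2\varphi + ab\psi + b^2 = \varepsilon Y^3$ modulo $(X)$, and finally obtains $n = 2\ell$ from $3n = 2(\ell + n)$. You instead work directly with the single relation $b^2 + a x_1 = \varepsilon Y^3$ from Theorem~\ref{2.2}: two rounds of ``set $Y=0$ and divide'' collapse it to $b_1^2 + (X^n + a_1Y)z = \varepsilon Y$, from which evaluation at the origin gives $b_1 \in \n$, the formal $Y$-derivative at the origin gives $a_1, z \in U(S)$ (valid in all characteristics since the term $2b_1\,\partial_Y b_1$ vanishes at the origin either way), and setting $Y=0$ forces $n$ even; a short change of generators in $R$ then finishes. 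Your approach is more self-contained and computational, bypassing all length calculations and the auxiliary fact $Y^2 \in (a,b)$; the paper's approach, by contrast, is more structural and illustrates how the numerical constraint $\ell_R(R/Q) = 2\,\ell_R(R/I)$ governs Ulrich ideals, a theme it reuses throughout Section~6.
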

\begin{proof}
The inclusion ($\supseteq$) follows from Proposition \ref{4.5}. Suppose that $I \in \calX_R$. By Proposition \ref{4.4}, $I =(\overline{a}, \overline{b})$ for some $a = X^n + a_1Y$ and $b = b_1Y$ with $n>0$, $a_1, b_1\in S$. We notice that $\ell_R(R/(\overline{a})) = 2\cdot\ell_R(R/I)$, since $I/(\overline{a}) \cong R/I$, and $\ell_R(R/(\overline{a})) = \ell_S(S/(a, Y^3)) = 3n$.  If $b_1\notin \n$, then $(a, b) = (X^n, Y)$, whence 
$
\ell_R(R/I) = \ell_S(S/(a, b)) = n.
$
This implies that $3n =2n$, which is impossible. Hence $b_1 \in \n$. If $b_1 \in (Y)$, then $y\overline{b} = 0$ in $R$, thus $y \in (\overline{a}):_R\overline{b} = I$. This implies that $Y\in (a, b)$ and $(a, b) = (X^n, Y)$, which is also impossible. Therefore, since $b_1\in \n\setminus (Y)$, we write $b_1 = \tau X^{\ell} +b_2Y$ with $\ell>0$, $\tau\in U(S)$, and $b_2 \in S$. Because $Y^2 \in (a, b)$ by Proposition \ref{4.4}, we have $(a, b) = (a, b, Y^2) = (X^n +a_1Y, X^{\ell}Y, Y^2)$, whence $(a, b) = (X^n +a_1Y, X^{\ell}Y)$ or $(X^n +a_1Y, Y^2)$, since $(a, b) \nsubseteq (Y)$. We then have $(a, b) = (X^n +a_1Y, X^{\ell}Y)$. Indeed, if $(a, b) = (X^n +a_1Y, Y^2)$, then $2\cdot \ell_R(R/I) = 2\cdot \ell_S(S/(X^n +a_1Y, Y^2)) =4n \neq 3n,$ which is impossible. Therefore, we may assume that $b_1 = X^{\ell}$. In addition, we have the following.

\begin{claim*}
$a_1 \in U(S)$.
\end{claim*}
\begin{proof}[Proof of Claim]
Because $(\overline{a}, \overline{b}) \in \calX_R$,  
$$
a^2\varphi + ab\psi +b^2 = \varepsilon Y^3
$$
for some $\varphi, \psi \in S$ and $\varepsilon \in U(S)$ by Theorem \ref{2.2}. Since $a^2\varphi \in (Y)$ and $a\notin (Y)$, $\varphi = Y \varphi_1$ for some $\varphi_1 \in S$. Expanding the equation, we have
$$
a_1^2\varphi_1Y^2 + X^{2\ell}Y +2a_1\varphi_1 X^{2\ell}Y +a_1\psi X^{\ell}Y + \varphi_1 X^{4\ell} + \psi X^{3\ell} = \varepsilon Y^2.
$$
Therefore, $Y^2(a_1^2\varphi_1 - \varepsilon) \in (X)$, so that $a_1^2\varphi_1 - \varepsilon \in (X)$, whence $a_1\in U(S)$.
\end{proof}

It suffices to show that $n = 2\ell$. In fact, we have 
$$
\ell_R(R/I) = \ell_S(S/(X^n +a_1Y, X^{\ell}Y)) = \ell + n,
$$
while $\ell_R(R/(\overline{a})) = 3n$. Consequently, $3n = 2(\ell +n)$, whence $n =2\ell$. This completes the proof of Theorem \ref{4.6}.
\end{proof}

Similarly, if $k$ is even, we have the following. 

\begin{prop}\label{4.7}
Suppose that $k = 2m$ $(m\ge2)$. Then the following assertions hold true.
\begin{enumerate}[$(1)$]
\item
$\{ I \in\calX_R \mid y^m \in I \} = \{ (x^{\ell} + \alpha y, y^m) \mid \ell>0, \alpha\in R \}$. 
\item
Let $\ell, \ell' >0$, $\alpha, \alpha' \in R$ and suppose that $(x^{\ell} + \alpha y, y^m) = (x^{\ell'} + \alpha' y, y^m)$. Then $\ell = \ell'$ and $\alpha \equiv \alpha'$ $\mod$ $\m = \n/(Y^{2m})$. 
\end{enumerate}
\end{prop}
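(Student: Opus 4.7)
The plan is to prove the two inclusions of (1) separately and then deduce (2) from a direct manipulation of the defining equality of the two ideals. For the inclusion $(\supseteq)$ in (1), given $I = (x^{\ell} + \alpha y, y^m)$, lift $\alpha \in R$ to $\alpha_1 \in S$ and set $a = X^{\ell} + \alpha_1 Y$, $b = Y^m$. Since $a \notin (Y)$, the pair $a, b$ is a system of parameters of $S$, and the identity $b^2 = Y^{2m} = f$ is precisely the relation required by Proposition \ref{2.1} (or Corollary \ref{2.4}) with $x_1 = 0$ and $\varepsilon = 1$. Hence $I = (\overline{a}, \overline{b}) \in \calX_R$, and obviously $y^m = \overline{b} \in I$.

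For the reverse inclusion, take $I \in \calX_R$ with $y^m \in I$. Proposition \ref{4.4} yields $I = (\overline{a}, \overline{b})$ with $a = X^n + a_1 Y$ ($n > 0$), $b = b_1 Y$, and $I^2 = (\overline{a}) I$. Since $R$ is Gorenstein and $\mu_R(I) = 2$, the Ulrich condition gives $I/(\overline{a}) \cong R/I$, so $\ell_R(R/I) = \tfrac12 \ell_R(R/(\overline{a}))$. The hypothesis $y^m \in I$ lifts to $Y^m = Aa + Bb + CY^{2m}$ in $S$, i.e.\ $Y^m(1 - CY^m) \in (a, b)$; since $1 - CY^m$ is a unit of $S$, this gives $Y^m \in (a, b)$, so $(\overline{a}, y^m) \subseteq I$. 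The argument will finish once we establish
\begin{equation*}
\ell_S\bigl(S/(X^n + a_1 Y, Y^j)\bigr) = nj \qquad (j \ge 1),
\end{equation*}
because $j = 2m$ then gives $\ell_R(R/I) = mn$ and $j = m$ gives $\ell_R(R/(\overline{a}, y^m)) = mn$, forcing $I = (\overline{a}, y^m) = (x^n + \overline{a_1} y, y^m)$, which is of the required shape. This length identity is the main technical step; I would prove it by regarding $S/(Y^j)$ as a free $k[[X]]$-module with basis $1, Y, \ldots, Y^{j-1}$ and computing that multiplication by $X^n + a_1 Y$ has, in this basis, a lower triangular matrix with each diagonal entry equal to $X^n$, so the determinant is $X^{nj}$ and the claimed length follows.

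For (2), lift the equality $(x^{\ell} + \alpha y, y^m) = (x^{\ell'} + \alpha' y, y^m)$ to the equality $(X^{\ell} + \alpha_1 Y, Y^m) = (X^{\ell'} + \alpha_1' Y, Y^m)$ in $S$ (absorbing $Y^{2m} \in (Y^m)$). Comparing colengths via the length formula above gives $\ell = \ell'$ immediately. Writing $X^{\ell} + \alpha_1 Y = (X^{\ell} + \alpha_1' Y)\xi + Y^m \eta$ and using that $Y \nmid X^{\ell}$ in $S$ forces $\xi = 1 + Y\rho$ for some $\rho \in S$; substituting back and cancelling $Y$ yields $\alpha_1 - \alpha_1' = \alpha_1' Y\rho + Y^{m-1}\eta + X^{\ell}\rho$, which lies in $\n$ since $m \ge 2$ and $\ell \ge 1$. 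Hence $\alpha \equiv \alpha' \pmod{\m}$, completing the argument.
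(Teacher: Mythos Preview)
Your proof is correct and follows the same approach as the paper's. The paper's argument for $(\supseteq)$ is just a pointer to Corollary \ref{2.4}, and for $(\subseteq)$ it uses Proposition \ref{4.4} and the same length comparison $\ell_R(R/I)=mn=\ell_S(S/(a,Y^m))$ to force $(a,b)=(a,Y^m)$; for (2) it simply says ``same technique as in the proof of Proposition \ref{4.5} (2)'', which is exactly your colength-plus-substitution argument. Your only additions are the explicit lift of $y^m\in I$ to $Y^m\in(a,b)$ and the triangular-matrix justification of $\ell_S(S/(X^n+a_1Y,Y^j))=nj$, both of which the paper leaves implicit.
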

\begin{proof}
(1)  The inclusion ($\supseteq$) follows from Corollary \ref{2.4}. Suppose that $I \in \calX_R$. $I =(\overline{a}, \overline{b})$ for some $a = X^n + a_1Y$ and $b = b_1Y$ with $n>0$, $a_1, b_1\in S$. Since $\ell_R(R/(\overline{a})) = 2\cdot \ell_R(R/I)$ and $\ell_R(R/(\overline{a})) = \ell_S(S/(X^n + a_1Y, Y^{2m})) = 2mn$, we have $\ell_R(R/I) = mn$. In contrast, because $y^m \in I$, $$mn = \ell_R(R/I) = \ell_S(S/(a, b)) = \ell_S(S/(a, b, Y^m)) \le \ell_S(S/(X^n + a_1Y, Y^m)) = mn,$$
hence $(a, b) = (X^n + a_1Y, Y^m)$, as desired. The Assertion (2) follows from the same technique as in the proof of Proposition \ref{4.5} (2).
\end{proof}

\begin{cor}\label{4.8}
Suppose that $R = k[[X, Y]]/(Y^4)$. Then
$$
\{ I \in\calX_R \mid y^2 \in I \} = \{ (x^{\ell} + \alpha y, y^2) \mid \ell>0, \alpha\in R \}.
$$
\end{cor}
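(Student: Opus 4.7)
The plan is essentially trivial: Corollary \ref{4.8} is nothing but the specialization of Proposition \ref{4.7}(1) to the case $k=4$, i.e.\ $m=2$. Since the hypothesis $m\ge 2$ of Proposition \ref{4.7} is satisfied when $m=2$, the ring $R=k[[X,Y]]/(Y^4)$ falls within its scope, and the displayed equality in Corollary \ref{4.8} is obtained by reading off the displayed equality of Proposition \ref{4.7}(1) with $y^m$ replaced by $y^2$.

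Accordingly, the written proof will consist of a single sentence invoking Proposition \ref{4.7}(1) with $m=2$. There is no obstacle to anticipate, since the substantive work has already been done in the more general proposition: the inclusion $(\supseteq)$ came from Corollary \ref{2.4} applied to $f=Y^{2m}=(Y^m)^2$ with $b=Y^m$, while the inclusion $(\subseteq)$ followed from Proposition \ref{4.4} combined with the length count $\ell_R(R/(\overline{a}))=2\cdot\ell_R(R/I)=2mn$ together with the hypothesis $y^m\in I$, forcing $(a,b)=(X^n+a_1Y,Y^m)$. Specializing $m=2$ changes none of this reasoning.

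If desired, one may additionally remark that Corollary \ref{4.8} only describes the subset of $\calX_R$ consisting of those Ulrich ideals that contain $y^2$; the complementary set, i.e.\ Ulrich ideals $I$ of $R=k[[X,Y]]/(Y^4)$ with $y^2\notin I$, is not addressed here and will presumably be treated in the subsequent Theorem \ref{4.10}. This observation is purely organizational and requires no proof.
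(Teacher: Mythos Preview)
Your proposal is correct and matches the paper's approach exactly: the paper states Corollary \ref{4.8} immediately after Proposition \ref{4.7} with no proof, treating it as the direct specialization of Proposition \ref{4.7}(1) to $k=4$, $m=2$.
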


For a moment, suppose that $k = 4$. Let $I \in\calX_R$ and assume that $y^2 \notin I$. Then $I =(\overline{a}, \overline{b})$ and $I^2 = \overline{a}I$ for some $a = X^n + a_1Y$ and $b = b_1Y$, where $n>0$, $a_1, b_1\in S$ by Proposition \ref{4.4}. With this notation, we get the following.

\begin{lem}\label{4.9}
$b_1 = X^p + b_2Y$ with $0<p<n$ and $b_2 \in S$.
\end{lem}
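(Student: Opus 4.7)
The plan is to extract from Theorem \ref{2.2} a key equation $b_1^2 + a\eta = \varepsilon Y^2$, then use it to constrain $b_1$, and finally use a colength computation to bound $p$.

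First, applying Theorem \ref{2.2} with $d=1$ produces $\xi \in (a,b)$ and $\varepsilon \in U(S)$ with $b^2 + a\xi = \varepsilon Y^4$. Substituting $b = b_1 Y$ gives $a\xi = Y^2(\varepsilon Y^2 - b_1^2)$. A short direct check on the $k[[X]]$-basis $\{1, Y\}$ of $S/(Y^2)$ shows that $a = X^n + a_1 Y$ is a non-zero-divisor modulo $Y^2$ (the multiplication-by-$a$ matrix has determinant $X^{2n}$). Hence $\xi \in (Y^2)$, and cancelling $Y^2$ yields $b_1^2 + a\eta = \varepsilon Y^2$ for some $\eta \in S$, henceforth denoted $(\star)$.

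Next I would pin down the form of $b_1$. If $b_1 \in U(S)$, then $(\overline{b}) = (y)$ and $y^2 \in I$, contradicting the hypothesis. If $b_1 = Y b_1'$, substituting into $(\star)$ and re-applying the non-zero-divisor argument in $S/(Y^2)$ cancels another $Y^2$ to give $(b_1')^2 + a\zeta = \varepsilon$; reducing modulo $\n$ forces $b_1'$ to be a unit, whence $(\overline{b}) = (y^2)$ and once more $y^2 \in I$, a contradiction. Therefore $b_1 \in \n\setminus(Y)$, so $b_1 = \tau X^p + b_2' Y$ with $\tau \in U(S)$ and $p \geq 1$. Rescaling $b$ by $\tau^{-1}$ preserves $I$ and the form $b = b_1 Y$ from Proposition \ref{4.4}, so we may assume $b_1 = X^p + b_2 Y$.

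To bound $p < n$: the isomorphism $I/(\overline{a}) \cong R/I$ together with $\ell_R(R/(\overline{a})) = \ell_S(S/(a, Y^4)) = 4n$ gives $\ell_S(S/(a, b)) = 2n$. Suppose for contradiction $p \geq n$. Using $X^n \equiv -a_1 Y \pmod{a}$ we get $b \equiv (b_2 - X^{p-n}a_1)\,Y^2 \pmod{a}$, so $(a, b) = (a, cY^2)$ with $c = b_2 - X^{p-n}a_1$. A determinantal computation on the $k[[X]]$-module $S/(Y^2) \cong k[[X]] \oplus k[[X]] Y$ shows $\ell_S(S/(a, Y^2)) = 2n$. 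Since $(a, cY^2) \subseteq (a, Y^2)$ and both quotients have length $2n$, the two ideals coincide; in particular $Y^2 \in (a, b)$ and $y^2 \in I$, contradicting the hypothesis. Hence $p < n$.

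The main obstacle is the final colength step: one must establish $\ell_S(S/(a, Y^2)) = 2n$ carefully (note that $(a, Y^2)$ need not equal $(X^n, Y^2)$) and then extract the contradiction from the forced equality of ideals, since $(\star)$ alone only yields the weaker necessary condition $2p \geq n$.
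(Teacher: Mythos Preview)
Your argument is correct, but it takes a more computational route than the paper. The paper never invokes Theorem~\ref{2.2} or the equation~$(\star)$; instead it repeatedly exploits the single structural fact $(\overline{a}):_R\overline{b}=I$, which holds because $I/(\overline{a})\cong R/I$. For $b_1\notin U(S)$ one just notes $y\notin I$; for $b_1\notin(Y)$ one observes that $b_1\in(Y)$ forces $b\in(Y^2)$, hence $y^2\overline{b}=0$ in $R=S/(Y^4)$, so $y^2\in(\overline{a}):_R\overline{b}=I$; and for $p<n$ one computes $b\equiv cY^2\pmod a$ when $p\ge n$, whence again $y^2\overline{b}\in(\overline{a})$ and $y^2\in I$. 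This avoids both the derivation of~$(\star)$ and the colength comparison with $(a,Y^2)$. Your approach has the merit of being self-contained via explicit length and determinant computations, and the equation~$(\star)$ you extract is interesting in its own right; but note that your colength trick ($(a,b)\subseteq(a,Y^2)$ with equal colength $2n$) would already handle the case $b_1\in(Y)$ directly, making the detour through~$(\star)$ unnecessary even within your own framework.
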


\begin{proof}
Because $y\notin I$, $b_1 \notin U(S)$. We then have $b_1\in \n \setminus (Y)$. Indeed, if $b_1\in (Y)$, then $y^2\overline{b} = 0$ in $R$, whence $y^2\in I$. This is impossible. Therefore $b_1 = \tau X^p + b_2 Y$ with $p>0$, $b_2 \in S$, and $\tau \in U(S)$, and may assume $\tau = 1$. Assume $p\ge n$. Then, because 
$$b = X^pY + b_2Y^2 \underset{\mod a}{\equiv} X^{p-n}Y(-a_1Y) + b_2Y^2 \in (Y^2),$$
we have $y^2\in (\overline{a}):_R \overline{b} = I$, which is impossible. Therefore $0<p<n$.
\end{proof}

\begin{thm}\label{4.10}
Suppose that $R =k[[X, Y]]/(Y^4)$. Let $I\in\calX_R$ and assume that $y^2\notin I$. We set $I = (\overline{a}, \overline{b})$ with $a, b \in S$. Then the following assertions hold true. 
\begin{enumerate}[$(1)$]
\item
$(a, b) = (X^n + a_1Y, Y(X^p + b_2Y))$ with $0<p<n$, $a_1\in\n$, and $b_2\in U(S)$.
\item
If $a_1 \in (Y)$, then $\operatorname{ch}k =2$.
\item
If ${\rm ch}$$k \neq 2$, then $(a, b) = (X^n + \alpha X^rY, Y(X^p + b_2Y))$ with $0<r<p<n$, $n-p\le r$, and $\alpha, b_2 \in U(S)$.
\end{enumerate} 
\end{thm}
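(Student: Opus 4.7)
The plan is to feed the forms $a=X^n+a_1Y$ and $b=Y(X^p+b_2Y)$ with $0<p<n$ provided by Proposition \ref{4.4} and Lemma \ref{4.9} into the Ulrich equation $b^2+ax_1=\varepsilon Y^4$ from Theorem \ref{2.2} (with $x_1\in(a,b)$ and $\varepsilon\in U(S)$), and to read off (1)--(3) from a careful coefficient analysis. Since $a\equiv X^n\pmod{Y}$ is a unit modulo $Y$, a short $Y$-divisibility argument forces $x_1\in(Y^2)$; writing $x_1=Y^2v$ and dividing the Ulrich equation by $Y^2$, then reducing modulo $Y$, yields $v|_{Y=0}=-X^{2p-n}$, so $n\le 2p$. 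Substituting $v=-X^{2p-n}+Yv_1$ and dividing once more by $Y$ produces the core identity
\[
\bigl(2b_2X^p+X^nv_1-a_1X^{2p-n}\bigr)+Y\bigl(b_2^2+a_1v_1\bigr)=\varepsilon Y\qquad\text{in }S.
\]
Writing $A_i,B_i,V_i,E_i\in k[[X]]$ for the $Y^i$-coefficients of $a_1,b_2,v_1,\varepsilon$, the $Y^0$-coefficient gives
\[
A_0=X^{n-p}\bigl(2B_0+X^{n-p}V_0\bigr),
\]
so $A_0\in(X^{n-p})\subseteq(X)$ and $a_1(0,0)=A_0(0)=0$, i.e.\ $a_1\in\n$; this is half of (1).

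The main obstacle in (1) is ruling out the boundary case $n=2p$, which I would handle by invoking the condition $x_1\in(a,b)$ from Theorem \ref{2.2}. Supposing $n=2p$ gives $v_0=-1$; writing $Y^2v=\alpha a+\beta b$ in $S$ forces $\alpha\in(Y)$ by reduction modulo $Y$, so set $\alpha=Y\alpha'$. Using that $Y$ is a non-zero-divisor, matching $Y^1$-coefficients yields $\beta_0=-\alpha'_0X^{n-p}$, and comparing $Y^2$-coefficients then gives
\[
v_0=\alpha'_0\bigl(A_0-X^{n-p}B_0\bigr)+\alpha'_1X^n+\beta_1X^p.
\]
When $n=2p$ the right-hand side lies in $(X)$ (since $A_0(0)=0$, $X^{n-p}|_{X=0}=0$ and $p\ge 1$), whereas $v_0=-1$ is a unit; this contradiction forces $n<2p$. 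Evaluating the $Y^1$-coefficient of the core identity at $X=0$ then collapses to $B_0(0)^2=E_0(0)\ne 0$, so $B_0(0)\ne 0$ and $b_2\in U(S)$, completing (1).

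For (2), assume $a_1\in(Y)$, i.e.\ $A_0=0$; the $Y^0$-identity then forces $2B_0\in(X^{n-p})\subseteq(X)$. If $\operatorname{ch}k\ne 2$, then $B_0\in(X)$ and $B_0(0)=0$, contradicting $b_2\in U(S)$ from (1); hence $\operatorname{ch}k=2$. For (3), assume $\operatorname{ch}k\ne 2$. By (2) one has $A_0\ne 0$, and since $2B_0(0)\ne 0$ the factor $2B_0+X^{n-p}V_0$ is a unit in $k[[X]]$; thus $A_0$ has $X$-order exactly $r:=n-p$, yielding $0<r<p<n$ (using $n<2p$) and $n-p\le r$ with equality. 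To upgrade the condition $A_0\in(X^r)$ (on the $Y=0$ slice) to $a_1\in(X^r)$ (as an element of $S$), I would iteratively modify $a\mapsto a+\rho b$: this transforms $a_1$ into $a_1+\rho(X^p+b_2Y)$, so that $A_k$ is changed by $\rho_kX^p+\sum_{i+j=k-1}\rho_iB_j$. Since $X^p\in(X^r)$ and $B_0\in U(k[[X]])$, one can recursively choose $\rho_0,\rho_1,\ldots\in k[[X]]$ so that each new $A_k$ lies in $(X^r)$; then $\alpha:=a_1/X^r\in S$ satisfies $\alpha(0,0)\ne 0$, so $\alpha\in U(S)$ and $a=X^n+\alpha X^rY$, establishing (3).
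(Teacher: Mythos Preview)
Your proof is correct, and it takes a genuinely different route from the paper's.

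The paper's argument is a blend of two tools: the length identity $\ell_R(R/(\overline a))=2\,\ell_R(R/I)$ and coefficient analysis of the expanded Ulrich relation $a^2\varphi+ab\psi+b^2=\varepsilon Y^4$. The length identity is used to show $a_1\in\n$ (by computing $\ell_S(S/(X^n+a_1Y,X^p+b_2Y))=n$ and ruling out $a_1\in U(S)$) and, in~(3), to obtain the bound $n\le r+p$. The expanded equation is then mined for $b_2\in U(S)$ and for~(2). By contrast, you never touch lengths: everything is read off from the compact equation $b^2+ax_1=\varepsilon Y^4$ after a clean $Y$-adic reduction, together with the membership condition $x_1\in(a,b)$ (which you invoke to kill the boundary case $n=2p$). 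This makes your argument more uniform, at the cost of the extra step excluding $n=2p$; the paper never needs this because its length argument in~(3) handles the bound directly. A small bonus of your approach is that you actually prove $r=n-p$ with equality (since $A_0=X^{n-p}\cdot(\text{unit})$), whereas the paper only establishes $n-p\le r$. Your iterative modification in~(3) is also a bit more elaborate than the paper's single reduction $a\mapsto a-\text{(suitable multiple of }b)$ to kill $a_2$, but both land on the same normal form.
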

\begin{proof}
(1)  Thanks to Lemma \ref{4.9}, $(a, b) = (X^n + a_1Y, Y(X^p + b_2Y))$ with $0<p<n$ and $a_1, b_2 \in S$. We may assume $a = X^n + a_1Y$ and $b = Y(X^p + b_2Y)$. Because $\ell_R(R/(\overline{a})) = 2\cdot \ell_R(R/I)$ and $\ell_R(R/(\overline{a})) = \ell_S(S/(X^n + a_1Y, Y^4)) = 4n$, we have $$2n = \ell_R(R/I) = \ell_S(S/(X^n + a_1Y, Y(X^p + b_2Y))) = n + \ell_S(S/(X^n + a_1Y, X^p + b_2Y)),$$
so that $\ell_S(S/(X^n + a_1Y, X^p + b_2Y)) = n$. If $a_1 \in U(S)$, then $(X^n + a_1Y, X^p + b_2Y) = (X^n + a_1Y, X^p(1-a_1^{-1}b_2X^{n-p})) = (X^p, Y)$, hence $n = \ell_S(S/(X^n + a_1Y, X^p + b_2Y)) = p$, which is impossible. Therefore $a_1 \in \n$. In contrast, we have 
$$
a^2\varphi + ab\psi +b^2 = \varepsilon Y^4
$$
for some $\varphi, \psi \in S$ and $\varepsilon \in U(S)$ by Theorem \ref{2.2}. Then $\varphi = Y \varphi_1$ for some $\varphi_1 \in S$, since $a^2\varphi \in (Y)$ and $a\notin (Y)$. From the equation, we get
\begin{equation*}
\begin{split}
\varepsilon Y^3 = &\  b_2^2Y^3\\
& + a_1^2\varphi_1Y^2 + a_1b_2\psi Y^2 + 2b_2X^pY^2\\
& +  2a_1\varphi_1X^nY + b_2\psi X^nY + a_1\psi X^pY + X^{2p}Y\\
& + \varphi_1X^{2n} + \psi X^{n+p}.
\end{split}
\end{equation*}
Hence $X^{n + p}(\varphi_1 X^{n-p} + \psi) \in (Y)$, so that $\varphi_1 X^{n-p} + \psi \in (Y)$, whence $\psi \in \n$.
Similarly, $Y^2(-\varepsilon Y + b_2^2Y + a_1^2\varphi_1 + a_1b_2\psi) \in (X)$, so that $-\varepsilon Y + b_2^2Y + a_1^2\varphi_1 + a_1b_2\psi \equiv 0$ mod $(X)$.
Because $a_1, \psi \in \n$, $-\varepsilon Y + b_2^2Y  \equiv 0$ mod $(X, Y^2)$, whence $b_2 \in U(S)$.

(2)  Assume $a_1 \in (Y)$. Then, because $0<p<n$ and $\psi \in \n$, we have $2b_2X^pY^2\in (X^{p+1}, Y^3)$, therefore $\operatorname{ch}k =2$, since $b_2 \in U(S)$.

(3) Suppose that $\operatorname{ch}k \neq 2$. Then $a_1 \in \n \setminus (Y)$ by Assertions (1), (2). We write $a_1 = \alpha X^r + a_2Y$ with $r>0$, $\alpha \in U(S)$, and $a_2 \in S$. If $r\ge p$, since $a = X^n + \alpha X^rY \underset{\mod b}{\equiv} X^n + (-\alpha b_2^{-1}X^{r-p}Y)Y$, then by replacing $\alpha X^r$ with $-\alpha b_2^{-1}X^{r-p}Y$, we would have to assume that $a_1 \in (Y)$, which is unreasonable. Hence $0<r<p<n$. Because
$$
a = X^n + \alpha X^rY + a_2Y^2 \underset{\mod b}{\equiv} X^n + \alpha X^rY - a_2b_2^{-1}X^pY = X^n + (\alpha - a_2b_2^{-1}X^{p-r})X^rY
$$ 
and $\alpha - a_2b_2^{-1}X^{p-r} \in U(S)$, we may assume that $a_2 = 0$. Since  $\ell_S(S/(X^n + a_1Y, X^p + b_2Y)) = n$ (see the proof of Assertion (1)), if $n>r+p$,
\begin{equation*}
\begin{split}
n & = \ell_S(S/(X^n + a_1Y, X^p + b_2Y)) = \ell_S(S/(X^n + \alpha X^rY, X^p + b_2Y))\\
& = \ell_S(S/(X^n - \alpha b_2^{-1}X^{r+p}, X^p + b_2Y)) = \ell_S(S/(X^{r+p}, X^p + b_2Y)) = r+p,
\end{split}
\end{equation*}
which makes a contradiction. Therefore, $n \le r+p$.
\end{proof}

Now we explore a concrete example.

\begin{ex}\label{4.11}
Suppose that $R =k[[X, Y]]/(Y^4)$. Let $p, n$ be integers such that $0<p<n$ and $2n\le 3p$. We set $a = X^n + 2X^{n-p}Y$, $b = Y(X^p +Y)$. Then the following assertions hold true. 
\begin{enumerate}[$(1)$]
\item
$I = (\overline{a}, \overline{b}) \in \calX_R$, for any characteristic of $k$.
\item
$y^2 \notin I$.
\end{enumerate} 
\end{ex}
\begin{proof}
(1)  We set $\varphi = -X^{3p-2n}Y$, $\psi = X^{2p-n}$, and $\varepsilon = 1$. Then $a, b$ is a system of parameters of $S$, and we have $a^2\varphi + ab\psi + b^2 = \varepsilon Y^4$; therefore, $I \in \calX_R$ by Proposition \ref{2.1}.

(2)  If $y^2 \in I$, then $Y^2 \in (a, b)$. We write $Y^2 = (X^n + 2X^{n-p}Y)\xi + Y(X^p + Y)\eta$ with $\xi, \eta \in S$. Hence, since $\xi = Y\xi_1$ for some $\xi_1 \in S$, we have $Y(1-2X^{n-p}\xi_1-\eta) = X^p(X^{n-p}\xi_1 +\eta)$, so that $1-2X^{n-p}\xi_1-\eta = \rho X^p$ and $X^{n-p}\xi_1 +\eta = \rho Y$ for some $\rho \in S$. This implies that $1 \equiv \eta$ and $\eta \equiv 0$ mod $\n$, which is impossible. 
\end{proof}

In what follows, we assume that $f = X^kY$ ($k\ge1$). Thanks to Corollary \ref{4.3}, $(x^k, y)$ is the only decomposable Ulrich ideal in $R$. Let $I \in \calX_R$ and $I$ is indecomposable. We begin with the following. 

\begin{prop}\label{4.12}
$I =(\overline{a}, \overline{b})$ and $I^2 = \overline{a}I$ for some $a = X^n + a_1Y$ and $b = b_1XY$, where $n>0$, $a_1, b_1\in S$ such that $a_1 \notin (X)$. In addition, $n<k$ if $k\ge2$.
\end{prop}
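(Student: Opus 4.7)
The plan is to apply Theorem \ref{2.2} to choose $a, b \in \mathfrak{n}$ with $I = (\overline{a}, \overline{b})$, $I^2 = \overline{a} I$, and satisfying the Ulrich equation $b^2 + a x_1 = \varepsilon X^k Y$ in $S$ for some $x_1 \in (a, b)$ and $\varepsilon \in U(S)$, and then to normalize $(a, b)$ to the prescribed form using the available freedom. Since $\overline{a}$ is a non-zero-divisor in $R$, we have $a \notin (X) \cup (Y)$ in $S$, so both $a(X, 0) \in k[[X]]$ and $a(0, Y) \in k[[Y]]$ are nonzero. Reducing the Ulrich equation modulo $X$ and modulo $Y$ and combining with $x_1 \in (a, b)$, a valuation argument in the discrete valuation rings $k[[X]]$ and $k[[Y]]$ shows $a(X, 0) \mid b(X, 0)$ and $a(0, Y) \mid b(0, Y)$ (with the convention that every element divides $0$).

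The central step is to show that $b$ can be chosen in $(XY)$, equivalently that both $b(X, 0)$ and $b(0, Y)$ vanish. Under a modification $b \mapsto u b + s a$ with $u \in S^{\times}$ and $s \in S$, the pair $(\gamma_X, \gamma_Y) := \bigl((b(X, 0)/a(X, 0))(0),\ (b(0, Y)/a(0, Y))(0)\bigr)$ transforms to $(u_0 \gamma_X + s_0,\ u_0 \gamma_Y + s_0)$, where $u_0 = u(0, 0) \in k^{\times}$ and $s_0 = s(0, 0) \in k$. Hence the simultaneous vanishing of $b(X, 0)$ and $b(0, Y)$ by a single substitution is possible precisely when $\gamma_X = \gamma_Y$, and whether this equality holds is invariant under the modification (up to a common nonzero scalar). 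The main obstacle of the proof is to show that $\gamma_X = \gamma_Y$ can always be arranged when $I$ is indecomposable; I expect the contrary case to force $I = (x^k, y)$, which by Corollary \ref{4.3} is the unique decomposable Ulrich ideal. One would derive the contradiction by extracting from a nonzero obstruction $\gamma_X - \gamma_Y$ two generators $\overline{a'}, \overline{b'} \in I$ with $\overline{a'}\,\overline{b'} = 0$ satisfying the annihilator conditions of Proposition \ref{4.1}.

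Once $b = b_1 X Y$, the Ulrich equation reads $X^2 Y^2 b_1^2 + a x_1 = \varepsilon X^k Y$. Reducing modulo $X$ and modulo $Y$ forces $x_1 \in (X) \cap (Y) = (XY)$; writing $x_1 = X Y y_1$ and dividing the equation by $XY$ gives $a y_1 + X Y b_1^2 = \varepsilon X^{k-1}$. Setting $Y = 0$ yields $a(X, 0) \cdot y_1(X, 0) = \varepsilon X^{k-1}$ in $k[[X]]$; hence $a(X, 0) = w(X) X^n$ for some $w \in k[[X]]^{\times}$ and some $1 \leq n \leq k - 1$, where the lower bound comes from $a \in \mathfrak{n}$ and the upper bound gives $n < k$ for $k \geq 2$. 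Finally, replacing $a$ by $w(X)^{-1} a$ (a legitimate modification since $w(X)^{-1} \in S^{\times}$) normalizes $a(X, 0) = X^n$, so $a - X^n \in (Y)$ and we may write $a = X^n + a_1 Y$ with $a_1 \in S$; the requirement $a(0, Y) = Y \cdot a_1(0, Y) \neq 0$ then forces $a_1(0, Y) \neq 0$, that is, $a_1 \notin (X)$.
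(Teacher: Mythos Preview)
Your approach differs from the paper's and has a real gap at precisely the place you flag as ``the main obstacle.'' The paper does not manipulate the Ulrich equation for this step; instead it passes to the overring $A = I{:}I = R + R\tfrac{\beta}{\alpha} \subseteq \overline{R} \subseteq Q(R)$ (with $\alpha = \overline{a}$, $\beta = \overline{b}$), observes that $A \cong I$ is a \emph{local} ring since $I$ is indecomposable, and from $k = R/\m \subseteq A/J \subseteq \overline{R}/J(\overline{R}) = k \times k$ concludes $A/J = k$. This lets one push $\tfrac{\beta}{\alpha}$ first into $J \subseteq J(\overline{R})$ and then into $\bigl(\sum_{i=1}^{k-1} K_1 x_1^i\bigr) \times (0)$, whence $\beta^k = 0$ and $b \in (XY)$.

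Your obstruction is the same object in disguise: the pair $(\gamma_Y, \gamma_X)$ is exactly the image of $\tfrac{\beta}{\alpha}$ under $A \hookrightarrow \overline{R} \twoheadrightarrow \overline{R}/J(\overline{R}) \cong k \times k$, and the equality $\gamma_X = \gamma_Y$ says this image lies on the diagonal~$k$, i.e.\ is equivalent to $A/J = k$. Your suggested route via Proposition~\ref{4.1} is not carried out, and any completion of it will have to use that $A = I{:}I$ is local: if $\gamma_X \neq \gamma_Y$ then $A$ surjects onto $k \times k$, and since $A$ is complete (being module-finite over the complete local ring $R$) one lifts idempotents to split $A$, hence $I$---which is the paper's argument read backwards. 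Once $b \in (XY)$ is established, your endgame (reading off $1 \le n \le k-1$ from $a(X,0) \mid X^{k-1}$ in the divided equation) is correct and in fact slightly more direct than the paper's use of $x^{k-1} \in (\overline{a}):_R\overline{b} = I$.
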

\begin{proof}
We identify $R \subseteq S/(X^k) \times S/(Y)$ and let $x_1, y_1$ (resp. $x_2$) denote the images of $X, Y$ (resp. $X$) in $S/(X^k)$ (resp. $S/(Y)$). Hence $S/(Y) = k[[x_2]]$ and $Q(R) = (K_1 + \sum_{i = 1}^{k-1}K_1x_1^i) \times K_2$, where $K_1 = Q(k[[y_1]])$ and $K_2 = Q(k[[x_2]])$. We set $A = I: I$. Then
$$
R \subseteq A \subseteq \overline{R} = (k[[y_1]] + \sum_{i = 1}^{k-1}K_1x_1^i) \times k[[x_2]],
$$
since $A$ is a module finite birational extension of $R$. Let us write $I = (\alpha, \beta)$ with $I^2 = \alpha I$. Then $A = R + R\frac{\beta}{\alpha}$. Remember now that $A$ is a local ring, since $A \cong I$ is indecomposable. Let $J, \m,$ and $J(\overline{R})$ denote the maximal ideals of $A, R$, and the Jacobson radical of $\overline{R}$. Then, since 
$$
k = R/\m \subseteq A/J \subseteq \overline{R}/J(\overline{R}) = k \times k,
$$
we have $R/\m = A/J$. Take $r \in R$ so that $\frac{\beta}{\alpha} \equiv r$ mod $J$. Then, replacing $\beta$ with $\beta - r\alpha$, we can assume that $\frac{\beta}{\alpha} \in J$. Since $J \subseteq J(\overline{R}) =  (y_1k[[y_1]] + \sum_{i = 1}^{k-1}K_1x_1^i) \times x_2k[[x_2]]$, we get $\frac{\beta}{\alpha} = r' + \rho$ for some $r' \in R$ and $\rho \in (\sum_{i = 1}^{k-1}K_1x_1^i) \times (0)$. Therefore, replacing $\beta$ with $\beta- \alpha r'$, from the beginning we may assume that $\frac{\beta}{\alpha} \in (\sum_{i = 1}^{k-1}K_1x_1^i) \times (0)$. Let us now write $\alpha = \overline{a}$ and $\beta = \overline{b}$ with $a, b \in S$. Then, since $\beta^k = 0$ in $R$, we have $b^k \in (X^kY)$, so that $b \in (XY)$. We write $b = b_1XY$ with $b_1 \in S$. Notice that $a, b$ is a system of parameters of $S$ by Lemma \ref{2.3}. Consequently, $a \notin (X)\cup (Y)$, so that we may assume that $a = X^n + a_1Y$ with $n>0$ and $a_1 \in S$ such that $a_1 \notin (X)$. If $k\ge2$, we have $X^{k-1} \in (a, b)$, since $x^{k-1} \in (\alpha):_R\beta = I$. Thus, because $X^{k-1} \in (a, b, Y) = (X^n, Y)$, we get $n<k$. 
\end{proof}

\begin{thm}\label{4.13}
Suppose that $R = k[[X, Y]]/(X^kY)$ with $1\le k \le 2$. Then
$$
\calX_R = \{ (x^k, y) \}.
$$
\end{thm}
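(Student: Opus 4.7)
The plan is to combine Corollary \ref{4.3}, which tells us that $(x^k, y)$ is the unique decomposable Ulrich ideal of $R$, with Proposition \ref{4.12} and Theorem \ref{2.2} to show that no \emph{indecomposable} Ulrich ideal can exist when $k \in \{1, 2\}$. Suppose, for contradiction, that $I \in \calX_R$ is indecomposable. By Proposition \ref{4.12}, we may write $I = (\overline{a}, \overline{b})$ with $a = X^n + a_1 Y$ and $b = b_1 XY$, where $n>0$, $a_1 \notin (X)$ (so in particular $a_1 \neq 0$), and $n < k$ whenever $k \ge 2$.

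The case $k = 1$ is immediate: here $f = XY$, so $b = b_1 f \in (f)$ and $\overline{b} = 0$ in $R$, forcing $I = (\overline{a})$ to be principal. This contradicts $\mu_R(I) = d + 1 = 2$, which holds by Theorem \ref{1.2}(1) since $R$ is Gorenstein.

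For $k = 2$ the constraint $n < k$ forces $n = 1$, so $a = X + a_1 Y$ with $a_1 \neq 0$. Applying Theorem \ref{2.2} with $d = 1$, there exist $x_1 \in (a, b)$ and $\varepsilon \in U(S)$ with $b^2 + a x_1 = \varepsilon f = \varepsilon X^2 Y$. Substituting $b = b_1 XY$ and rearranging gives
$$
(X + a_1 Y)\, x_1 \;=\; X^2 Y\,(\varepsilon - b_1^2 Y),
$$
whose right-hand side is $X^2 Y$ times a unit of $S$. Since $a_1 \neq 0$, the element $X + a_1 Y$ has nonzero linear part and hence is a regular parameter of $S$, so it is prime. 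The divisibility $(X + a_1 Y) \mid X^2 Y$ in the UFD $S$ therefore forces $X + a_1 Y$ to be associate to $X$ or to $Y$.

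The heart of the argument, and the main place one must be careful, is ruling out these two associateness possibilities using the hypothesis $a_1 \notin (X)$. If $X + a_1 Y = uX$ with $u \in U(S)$, then $(1 - u)X = u a_1 Y$, which forces $1 - u \in (Y)$ (since $X \notin (Y)$), and then cancelling $Y$ yields $a_1 \in (X)$, contradicting $a_1 \notin (X)$. If $X + a_1 Y = wY$ with $w \in U(S)$, then $X = (w - a_1) Y \in (Y)$, again a contradiction. Thus $X + a_1 Y$ cannot divide $X^2 Y$, contradicting the displayed equation and completing the proof. The whole argument is short; the only subtlety is that one should exploit the primality of $X + a_1 Y$ in $S$ rather than attempt to compute $x_1$ explicitly.
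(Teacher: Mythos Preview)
Your argument is correct and follows a genuinely different route from the paper's. A few minor points: in the associateness case the equation should read $(1-u)X = -a_1 Y$ rather than $u a_1 Y$, but this does not affect the conclusion. Also, the linear part of $X + a_1 Y$ is nonzero simply because of the $X$ term, independently of $a_1$. More substantively, Theorem~\ref{2.2} as stated only guarantees \emph{some} generators of $I$ satisfy the relation $b^2 + a x_1 = \varepsilon f$; you are applying it to the specific $a, b$ furnished by Proposition~\ref{4.12}. This is legitimate because those generators satisfy $I^2 = \overline{a}I$, and the proof of Theorem~\ref{2.2} derives the relation from exactly that reduction property---but strictly speaking you are invoking the proof, not just the statement.

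The paper argues differently in both cases. For $k = 1$ it observes that $\ell_R(\overline{R}/R) = 1$, so the birational algebra $A = I:I$ (which strictly contains $R$) must equal $\overline{R} = S/(X) \times S/(Y)$; this is not local, contradicting indecomposability of $I \cong A$. Your observation that $\overline{b} = 0$ is more direct and avoids the birational machinery. For $k = 2$ the paper pulls from the proof of Proposition~\ref{4.12} the fact that $X = X^{k-1} \in (a, b)$, writes $X = a\varphi + b\psi$, and after a short manipulation obtains $1 \in \n$. Your route via the relation from Theorem~\ref{2.2} and primality of $X + a_1 Y$ in the UFD $S$ is equally short and perhaps conceptually cleaner, trading an explicit membership computation for a divisibility argument.
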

\begin{proof}
Suppose that $I\in \calX_R$ and $I$ is indecomposable. Assume that $k = 1$. Then, since $\overline{R} = S/(X) \times S/(Y)$ and $\ell_R(\overline{R}/R) =1$, $A = \overline{R}$ where $A = I:I$, which is impossible because $A$ is a local ring (see the proof of Proposition \ref{4.12}). Assume that $k = 2$. By Proposition \ref{4.13}, $I =(\overline{a}, \overline{b})$ for some $a = X + a_1Y$ and $b = b_1XY$ with $a_1, b_1\in S$ such that $a_1 \notin (X)$. Since $X \in (a, b)$ (see the proof of Proposition \ref{4.12}), we can write $X = (X + a_1Y) \varphi + b_1XY \psi$ with $\varphi, \psi \in S$. Then $a_1Y\varphi \in (X)$ and $a_1 \notin (X)$, whence $\varphi \in (X)$. Therefore, writing $\varphi = X\varphi_1$ with $\varphi_1 \in S$, we get $1 = (X + a_1Y)\varphi_1 + b_1Y\psi \in \n$, which is impossible. Consequently, if $k\le2$, $R$ has no indecomposable Ulrich ideal. Thanks to Corollary \ref{4.3}, this completes the proof of this Theorem. 
\end{proof}

In what follows, suppose that $k \ge 3$. Let $I \in\calX_R$ and assume that $I$ is indecomposable. Then $I =(\overline{a}, \overline{b})$ and $I^2 = \overline{a}I$ for some $a = X^n + a_1Y$ and $b = b_1XY$ with $n>0$ and $a_1, b_1\in S$ such that $a_1 \notin (X)$ by Proposition \ref{4.12}. With this notation, we have the following.

\begin{proposition}\label{4.14}
The following assertions hold true. 
\begin{enumerate}[$(1)$]
\item
$n\le k-2$.
\item
If $k \ge 4$ and $n = k-2$, then $xy \in I$.
\end{enumerate}
\end{proposition}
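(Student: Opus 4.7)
The plan is to apply Theorem~\ref{2.2}: there exist $x_1 \in (a, b)$ and $\varepsilon \in U(S)$ with $b^2 + a x_1 = \varepsilon X^k Y$, which expands to $(X^n + a_1Y)\, x_1 = X^2 Y(\varepsilon X^{k-2} - b_1^2 Y)$. Since $a = X^n + a_1 Y$ is coprime in $S$ to both $X$ (using $a_1 \notin (X)$) and $Y$, I can divide to obtain
\[
a\,\tau = \varepsilon X^{k-2} - b_1^2 Y \quad\text{for some } \tau \in S, \qquad x_1 = X^2 Y \tau.
\]

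For (1), suppose $n = k - 1$ for contradiction. Reducing the displayed identity modulo $Y$ yields $X^{k-1} \tau(X, 0) = \varepsilon(X, 0)\, X^{k-2}$ in $k[[X]]$; cancelling $X^{k-2}$ forces $X \tau(X, 0) = \varepsilon(X, 0)$, whose right side is a unit in $k[[X]]$ while the left has positive $X$-order. Combined with $n < k$ from Proposition~\ref{4.12}, this gives $n \leq k - 2$.

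For (2), assume $n = k - 2$ and $k \geq 4$. Rewriting $a \tau = \varepsilon X^{k-2} - b_1^2 Y$ as $X^{k-2}(\tau - \varepsilon) = -Y(b_1^2 + a_1\tau)$ and using $\gcd(X, Y) = 1$, I extract $\tau \equiv \varepsilon \bmod Y$ (so $\tau \in U(S)$) and $b_1^2 + a_1\tau \in (X^{k-2})$. Since $\tau$ is a unit, $x_1 = X^2 Y \tau \in (a, b)$ implies $X^2 Y \in (a, b)$; stripping one factor of $Y$ and then of $X$ (each legitimate because $a$ is coprime to each) leads to an identity $X = a u'' + b_1 v$ in $S$ for some $u'', v \in S$. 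Because $b_1 \in U(S)$ would give $XY = b_1^{-1} b \in (b)$ and hence $xy \in I$, the problem reduces to ruling out $b_1 \in \n$.

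This is the core difficulty. Assume $b_1 \in \n$ for contradiction. First, $b_1 \notin (X)$: else $b_1^2 \in (X)$ and $X^{k-2} \in (X)$ give $a_1 \tau \in (X)$ via $b_1^2 + a_1 \tau \in (X^{k-2})$, forcing $a_1 \in (X)$ since $\tau$ is a unit. So $b_1(0, Y) = Y^t v_1(Y)$ for some $t \geq 1$ and $v_1 \in k[[Y]]^{\times}$. Reducing $b_1^2 + a_1 \tau \in (X^{k-2})$ modulo $X$ gives $b_1(0, Y)^2 = -a_1(0, Y) \tau(0, Y)$, which excludes $a_1 \in U(S)$ by comparing $Y$-orders; hence $a_1 \in \n$ with $a_1(0, Y) = Y^{2t} u_1$, so $a(0, Y) = Y^{2t+1} \cdot (\text{unit})$. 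Two incompatible consequences of $X = a u'' + b_1 v$ now emerge: setting $X = 0$ and dividing by $Y^t$ gives $v_1 \cdot v(0, Y) \in (Y^{t+1}) k[[Y]]$, so $v(0, 0) = 0$; while reducing modulo $\n^2$, where $a \in \n^2$ since $k \geq 4$ and $a_1 \in \n$, yields $X \equiv v(0, 0)(\alpha X + \beta Y) \bmod \n^2$ with $\alpha, \beta$ the linear coefficients of $b_1$, forcing $v(0, 0)\alpha = 1$ and in particular $v(0, 0) \neq 0$. This contradiction completes the proof.
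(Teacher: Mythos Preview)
Your proof is correct and takes a genuinely different route from the paper's.

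Both arguments start from Theorem~\ref{2.2}, but you immediately factor the relation $b^2+ax_1=\varepsilon X^kY$ through $X^2Y$ (using that $a$ is coprime to $X$ and to $Y$) to obtain the clean identity $a\tau=\varepsilon X^{k-2}-b_1^2Y$. The paper instead expands $a^2\varphi+ab\psi+b^2=\varepsilon X^kY$ into a multi-line equation and reads off congruences term by term. For part~(1), your reduction modulo $Y$ gives the contradiction $X\tau(X,0)=\varepsilon(X,0)$ in one line; the paper first deduces $b_1\in U(S)$ and $a_1\in U(S)$ and then obtains the impossible length equality $2k-1=2k$ from $\ell_R(R/(\overline{a}))=2\ell_R(R/I)$. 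For part~(2), you extract from $X^2Y\in(a,b)$ the relation $X=au''+b_1v$ in $S$ and derive a contradiction by comparing the two reductions $X=0$ and ${}\bmod\ \n^2$; this is uniform in $k$. The paper, by contrast, assumes $b_1\in\n$, normalises to $b=X^2Y$, and then uses the length identity to obtain $(k-4)\ell=2$, reducing to the two sporadic cases $(k,\ell)\in\{(6,1),(5,2)\}$, each of which is eliminated by a separate congruence check.

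Your argument is shorter and avoids all length computations and case splits; the paper's approach is more hands-on but makes the arithmetic of the obstruction visible. One small point of presentation: your appeal to Theorem~\ref{2.2} for the \emph{specific} generators $a,b$ of Proposition~\ref{4.12} really uses the proof of that theorem (which works for any generators with $I^2=\overline{a}I$); the paper makes the same tacit use, so this is not a gap.
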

\begin{proof}
Because $(\overline{a}, \overline{b}) \in \calX_R$, 
$$
a^2\varphi + ab\psi + b^2 = \varepsilon X^kY
$$
for some $\varphi, \psi \in S$ and $\varepsilon \in U(S)$ by Theorem \ref{2.2}.
Since $a^2\varphi \in (XY)$ and $a \notin (X)\cup (Y)$, $\varphi = XY\varphi_1$ for some $\varphi_1 \in S$. We then have
\begin{equation*}
\begin{split}
\varepsilon X^{k-1} = &\  a_1^2\varphi_1Y^2\\
& + 2a_1\varphi_1 X^nY + a_1b_1\psi Y + b_1^2 XY\ \   \cdots \text{(A)}\\
& + \varphi_1 X^{2n} + b_1\psi X^n.
\end{split}
\end{equation*}
(1) Assume that $n>k-2$. Then $ n= k-1$ by Proposition \ref{4.12}. Hence $X^{k-1}(\varepsilon -b_1\psi - \varphi_1 X^{k-1}) \in (Y)$, so that $\varepsilon -b_1\psi  \in \n$, whence $b_1 \in U(S)$. Therefore, we may assume that $b_1 = 1$. Since $a_1 \notin (X)$, we write $a_1 = \tau Y^{\ell} + a_2 X$ with $\ell \ge0$, $\a_2 \in S$, and $\tau \in U(S)$. We then have $(a, b) = (X^{k-1} + \tau Y^{\ell + 1} + a_2XY, XY) = (X^{k-1} + \tau Y^{\ell + 1}, XY)$. Thus, from the beginning we may assume $a_1 = \tau Y^{\ell}$. From the above equation (A), we get $\tau \psi Y^{\ell + 1} + XY \equiv 0$ mod $(X^2, Y^2)$, hence $\ell = 0$. 
In contrast, because $\ell_R(R/(\overline{a})) = 2\cdot \ell_R(R/I)$, we have
\begin{equation*}
\begin{split}
&\ell_R(R/(\overline{a})) = \ell_S(S/(X^{k-1} + \tau Y, X^kY)) = k + k-1= 2k - 1,\ \text{and}\\
&\ell_R(R/I) = \ell_S(S/(X^{k-1} + \tau Y, XY)) = 1 + k-1 = k.
\end{split}
\end{equation*}
Hence $2k-1= 2k$, which is impossible. Therefore $n \le k-2$.

(2)  Suppose that $k\ge4$ and $n = k-2$. From the equation (A), we have $X^{k-2}(\varepsilon X - \varphi_1 X^{k-2} - b_1\psi) \in (Y)$, whence $b_1\psi \equiv \delta X$ mod $(Y)$, where $\delta = \varepsilon  - \varphi_1 X^{k-3} \in U(S)$. Assume that $b_1 \in \n$. Then $\psi \in U(S)$ and $b_1 = \rho X + b_2Y$ for some $\rho \in U(S)$ and $b_2 \in S$. We may assume that $\rho = 1$. We also get $a_1Y(a_1 \varphi_1 Y + b_1\psi) \in (X)$ from the equation (A). Since $a_1 \notin (X)$, we have $a_1\varphi_1 Y + b_1\psi \in (X)$, so that $a_1\varphi_1 Y + b_2 \psi Y = Y(a_1\varphi_1 + b_2\psi) \in (X)$. Whence $b_2 \in (a_1, X)$(notice that $\psi \in U(S)$). Writing $b_2 = a_1\xi + X\eta$ with $\xi, \eta\in S$, we get
$$
b = XY(X + a_1\xi Y + \eta XY) \underset{\mod a}{\equiv} X^2Y(1- \xi X^{k-3} + \eta XY), 
$$
hence we may assume that $b = X^2Y$ ($b_2 = 0$). Let $\ell = \ell_S(S/(a_1, X))$. Then
\begin{equation*}
\begin{split}
&\ell_R(R/(\overline{a})) = \ell_S(S/(X^{k-2} + a_1 Y, X^kY)) =  k (\ell + 1) + k - 2 = k\cdot \ell + 2k - 2,\ \text{and}\\
&\ell_R(R/I) = \ell_S(S/(X^{k-2} + a_1 Y, X^2Y)) = 2(\ell + 1) + k-2 = 2\ell + k.
\end{split}
\end{equation*}
Since $\ell_R(R/(\overline{a})) = 2\cdot \ell_R(R/I)$, we have $k\cdot \ell + 2k - 2 = 2(2\ell + k)$, so that $(k-4)\ell = 2$.
Thus, $k = 6$, $\ell = 1$ or $k =5$, $\ell =2$.

If $k = 6$ and $\ell = 1$, we can write $a_1 = \tau Y + a_2X$ with $\tau \in U(S)$ and $a_2 \in S$ (notice that $\ell = \ell_S(S/(a_1, X))$). From the equation (A), we get $\tau \psi XY^2 \equiv 0$ mod $(X^2, Y^3)$, which makes a contradiction.

If $k =5$ and $\ell =2$, we can write $a_1 = \tau Y^2 + a_2X$ with $\tau \in U(S)$ and $a_2 \in S$. Similarly, we get $\tau \psi XY^3 \equiv 0$ mod $(X^2, Y^4)$, which is impossible. Consequently, we have $b_1 \in U(S)$, therefore $xy \in I$.
\end{proof}

We get the following family of Ulrich ideals.

\begin{prop}\label{4.15}
Suppose that $k\ge 3$. Then the following assertions hold true.
\begin{enumerate}[$(1)$]
\item
$
\{ I \in \calX_R \mid xy \in I \} = \{ (x^{k-2} + \overline{\varepsilon}y, xy) \mid \varepsilon \in U(S) \}.
$
\item
Let $\varepsilon, \varepsilon' \in U(S)$ and suppose that $(x^{k-2} + \overline{\varepsilon}y, xy) = (x^{k-2} + \overline{\varepsilon'}y, xy)$. Then $\varepsilon \equiv \varepsilon'$ mod $\n$.
\end{enumerate}
\end{prop}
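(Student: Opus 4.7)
The plan is to argue both inclusions in (1) directly, then deduce (2) by a short lifting computation. For ($\supseteq$) in (1), I apply Proposition \ref{2.1} with $a = X^{k-2} + \varepsilon Y$ and $b = XY$: the pair $a, b$ is a system of parameters of $S$ (the only common zero is the origin), and the identity
$$b^2 + a\cdot(-\varepsilon^{-1} X^2 Y) = X^2Y^2 - \varepsilon^{-1} X^kY - X^2Y^2 = -\varepsilon^{-1} f$$
holds with $-\varepsilon^{-1} \in U(S)$ and $-\varepsilon^{-1}X^2Y \in (XY) \subseteq (a, b)$, so $(x^{k-2} + \overline{\varepsilon}y,\, xy) \in \calX_R$, and clearly $xy = \overline{b} \in I$.

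For ($\subseteq$), I continue under the standing assumption (imposed just before Proposition \ref{4.14}) that $I$ is indecomposable. Proposition \ref{4.12} gives $I = (\overline{a}, \overline{b})$ with $a = X^n + a_1 Y$, $b = b_1 XY$, $0 < n$, $a_1 \notin (X)$, and Proposition \ref{4.14}(1) refines this to $n \le k-2$. The first step is to show $b_1 \in U(S)$. Since $xy \in I$, write $XY = ua + vb + wX^kY$ in $S$; substituting $b = b_1 XY$ and $X^kY = X^{k-1}\cdot XY$ and collecting terms yields
$$XY\cdot(1 - vb_1 - wX^{k-1}) = u a.$$
If $b_1 \in \n$, the factor $1 - vb_1 - wX^{k-1}$ is a unit of $S$, forcing $a \mid XY$ in the UFD $S$. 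But $a \in \n$ satisfies $a|_{Y = 0} = X^n \neq 0$ and $a|_{X = 0} = a_1(0, Y)\,Y \neq 0$ (using $a_1 \notin (X)$), so $a$ cannot be a unit multiple of any of $X, Y, XY$; contradiction. Hence $b_1 \in U(S)$ and we may take $b = XY$.

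Next, let $\ell$ denote the order of $a_1(0, Y)$ as a power series in $Y$ (finite since $a_1 \notin (X)$). The isomorphisms $S/(a, X) \cong k[[Y]]/(a_1(0, Y)Y)$ and $S/(a, Y) \cong k[[X]]/(X^n)$ give $\ell_S(S/(a, X)) = \ell+1$ and $\ell_S(S/(a, Y)) = n$, and the standard additivity of length in the second slot yields
$$\ell_S(S/(a, X^kY)) = k(\ell+1) + n, \qquad \ell_S(S/(a, XY)) = (\ell+1) + n.$$
The Ulrich equality $\ell_R(R/(\overline{a})) = 2\ell_R(R/I)$ reads $(k-2)(\ell+1) = n$; combined with $n \le k-2$ this forces $\ell = 0$ and $n = k-2$, so $a_1 \in U(S)$, and setting $\varepsilon := a_1$ exhibits $I$ in the required form. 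For part (2), lift the assumed ideal equality to
$$X^{k-2} + \varepsilon Y = \xi(X^{k-2} + \varepsilon' Y) + \eta XY + \rho X^kY$$
in $S$; coprimality of $X^{k-2}$ and $Y$ in the UFD $S$ forces $1 - \xi \in (Y)$, and writing $1 - \xi = Y\tau$ and cancelling $Y$ yields $\xi\varepsilon' - \varepsilon \in (X)$, which at $(X, Y) = (0, 0)$ gives $\varepsilon \equiv \varepsilon' \pmod{\n}$.

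The principal technical point is extracting $b_1 \in U(S)$ from $xy \in I$ without case-splitting on $n$; the divisibility argument in the UFD $S$, exploiting $a_1 \notin (X)$ to prevent $a$ from dividing $XY$, is the conceptual heart of the proof and avoids the delicate Weierstrass-type analysis on $S/(a)$ that would otherwise be needed when $n = 1$.
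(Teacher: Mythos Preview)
Your proof is correct and follows the same arc as the paper's: verify $(\supseteq)$ via Proposition~\ref{2.1} (your identity $b^2 + a(-\varepsilon^{-1}X^2Y) = -\varepsilon^{-1}f$ is exactly the paper's $a^2\cdot 0 + ab(-\varepsilon^{-1}X) + b^2 = -\varepsilon^{-1}f$), then for $(\subseteq)$ invoke Propositions~\ref{4.12} and~\ref{4.14}(1), compute the two lengths, and solve $(k-2)(\ell+1)=n$ together with $n\le k-2$ to force $\ell=0$, $n=k-2$, $a_1\in U(S)$; part~(2) is the same lifting computation the paper refers to from Proposition~\ref{4.5}(2).

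The one divergence is what you flag as ``the principal technical point'': you prove $b_1\in U(S)$ by a UFD divisibility argument. This step is correct but unnecessary. From your own equation $XY(1 - vb_1 - wX^{k-1}) = ua$ with $b_1$ arbitrary, the factor $1 - wX^{k-1}$ is already a unit, so $XY\in (a,b)$ in $S$; then $(a,b) = (a, b_1XY) \subseteq (a, XY)\subseteq (a,b)$ gives $(a,b) = (a, XY)$ directly, regardless of whether $b_1$ is a unit. This is what the paper does in one line (``$XY\in (a,b)$, hence $(a,b)=(a,XY)$''), bypassing the divisibility analysis entirely.
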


\begin{proof}
(1) Let $a = X^{k-2} + \varepsilon Y$ with $\varepsilon \in U(S)$ and $b = XY$. Then $a, b$ is a system of parameters of $S$. Setting $\varphi = 0$, $\psi = - \varepsilon^{-1}X$, and $\delta = - \varepsilon^{-1}$, we have $a^2\varphi + ab\psi + b^2 = \delta X^kY$, thus $(\overline{a}, \overline{b}) \in \calX_R$ by Proposition \ref{2.1}. Conversely, suppose that $I \in \calX_R$ and $xy \in I$. Then $I =(\overline{a}, \overline{b})$ and $I^2 = \overline{a}I$ for some $a = X^n + a_1Y$ and $b = b_1XY$ with $n>0$ and $a_1, b_1\in S$ by Proposition \ref{4.12}, and $XY\in (a, b)$, hence $(a, b) = (a, XY)$. Let  $\ell = \ell_S(S/(a_1, X))$. Because $\ell_R(R/(\overline{a})) = 2\cdot \ell_R(R/I)$,
\begin{equation*}
\begin{split}
&\ell_R(R/(\overline{a})) = \ell_S(S/(X^n + a_1Y, X^kY)) = k\cdot(\ell + 1) + n,\ \text{and}\\ 
&\ell_R(R/I) = \ell_S(S/(X^n + a_1Y, XY)) = \ell + 1 + n,
\end{split}
\end{equation*}
we have $k\cdot(\ell + 1) + n = 2(\ell + 1 + n)$, so that $(k-2)\ell = n - (k-2)$. Since $k \ge 3$ and $k-2 \ge n$ (Proposition \ref{4.14}), we get $n = k-2$ and $\ell = 0$, therefore $(a, b) = (X^{k-2} + a_1Y, XY)$ with $a_1 \in U(S)$ as desired. The Assertion (2) follows from the same technique as in the proof of Proposition \ref{4.5} (2).
\end{proof}

Let $I \in\calX_R$ and assume that $I$ is indecomposable. We choose $a = X^n + a_1Y$ and $b = b_1XY$ as in Proposition \ref{4.12}. We then have the following.

\begin{prop}\label{4.16}
The following assertions hold true. 
\begin{enumerate}[$(1)$]
\item
If $n = 1$, then $k$ is odd, and $(a, b) = (X + \varepsilon Y^{\ell}, XY^p)$ where $\varepsilon \in U(S)$ and $\ell, p>0$ such that $(k-2)\ell = 2p- 1$.
\item
Suppose that $k$ is odd. Let $\ell, p>0$ such that $(k-2)\ell = 2p- 1$ and $\varepsilon \in U(S)$. Then $(x + \overline{\varepsilon} y^{\ell}, xy^p) \in \calX_R$.
\item
Let $\ell, p>0$ $($resp. $\ell', p'>0$$)$ such that $(k-2)\ell = 2p- 1$ $($resp. $(k-2)\ell' = 2p'- 1$$)$ and $\varepsilon, \varepsilon' \in U(S)$. If $(x + \overline{\varepsilon} y^{\ell}, xy^p) = (x + \overline{\varepsilon'} y^{\ell'}, xy^{p'})$, then $\ell = \ell'$, $p=p'$, and $\varepsilon \equiv \varepsilon'$ mod $\n$.
\end{enumerate}

\end{prop}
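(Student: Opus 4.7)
The plan is to use Theorem \ref{2.2} to translate Ulrich-ness into the algebraic identity $b^2 + ax = \delta f$ in $S$, and to analyze ideals of $R$ containing $\overline{a}$ by passing to the quotient $R/(\overline{a})$. Under the hypotheses of (1) this quotient turns out to be a principal ideal ring, which pins down $I/(\overline{a})$ uniquely by its colength.

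For (1), I first normalize $a$. Since $a_1 \notin (X)$, its image in $k[[Y]] \cong S/(X)$ has some $Y$-adic order $\ell - 1$ with $\ell \geq 1$, so I can write $a_1 = \tau Y^{\ell-1} + X a_2$ with $\tau \in U(S)$. Then $a = X + a_1 Y = \sigma X + \tau Y^\ell$ with $\sigma = 1 + a_2 Y \in U(S)$, and factoring yields $(a) = (X + \varepsilon Y^\ell)$ for $\varepsilon = \sigma^{-1}\tau$; replacing $a$ by $X + \varepsilon Y^\ell$, I may assume this form. Modulo $a$ we have $X \equiv -\varepsilon Y^\ell$, so $R/(\overline{a}) \cong k[[Y]]/(Y^{k\ell+1})$, a principal ideal ring. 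The Ulrich relation $I/(\overline{a}) \cong R/I$ forces $\ell_R(R/(\overline{a})) = 2 \ell_R(R/I)$, so $k\ell + 1$ must be even, hence $k$ (and $\ell$) are odd. Setting $p := (k\ell+1)/2 - \ell$, we obtain $(k-2)\ell = 2p - 1$ and $p \geq 1$ from $k \geq 3$. Since $I/(\overline{a})$ is then the unique ideal of colength $\ell + p$ in $R/(\overline{a})$, and since the images of $b$ and $XY^p$ in this ring both generate $(Y^{\ell+p})$ (note $XY^p \equiv -\varepsilon Y^{\ell+p} \pmod{a}$), they generate the same principal ideal modulo $\overline{a}$, so $(\overline{a}, \overline{b}) = (\overline{a}, \overline{XY^p})$.

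For (2), I apply Proposition \ref{2.1} to $a = X + \varepsilon Y^\ell$ and $b = XY^p$. This pair is a system of parameters since $S/(a, b) \cong k[[Y]]/(Y^{\ell+p})$ has finite length. The main obstacle is constructing $x \in (a, b)$ and a unit $\delta$ with $b^2 + ax = \delta X^kY$. I will use the factorization $X^k + \varepsilon^k Y^{k\ell} = a \cdot P(X, Y)$ with $P = \sum_{i=0}^{k-1}(-\varepsilon)^i X^{k-1-i} Y^{i\ell}$ (valid because $k$ is odd), together with $X^2 = a^2 - 2\varepsilon X Y^\ell - \varepsilon^2 Y^{2\ell}$ and the identity $2\ell + 2p = k\ell + 1$, to derive $b^2 + \varepsilon^{2-k} X^kY = a Q$ where $Q = aY^{2p} - 2\varepsilon Y^{\ell+2p} + \varepsilon^{2-k} YP$. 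To verify $Q \in (a, b)$ I reduce modulo $a$: since $P \equiv k \varepsilon^{k-1} Y^{(k-1)\ell} \pmod{a}$ and $(k-1)\ell + 1 = \ell + 2p$, the last two terms of $Q$ collapse to $(k-2)\varepsilon Y^{\ell+2p} \pmod{a}$, which equals $-(k-2) b Y^p \pmod{a}$ because $Y^{\ell+p} \equiv -\varepsilon^{-1} b \pmod{a}$. Thus $x := -Q \in (a, b)$ and $\delta := -\varepsilon^{2-k}$ fulfill the hypothesis of Proposition \ref{2.1}.

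For (3), I lift the given ideal equality in $R$ to an equality $(X + \varepsilon Y^\ell, XY^p, X^kY) = (X + \varepsilon' Y^{\ell'}, XY^{p'}, X^kY)$ of ideals in $S$. Computing the colength via $S/(X + \varepsilon Y^\ell) \cong k[[Y]]$ gives $\ell + p = \ell' + p'$, and combined with $\ell + p = (k\ell+1)/2$ for both sides this forces $\ell = \ell'$ (hence $p = p'$). Finally, writing $X + \varepsilon Y^\ell = \alpha (X + \varepsilon' Y^\ell) + \beta XY^p + \gamma X^kY$ in $S$ and separating the $X$-divisible and $Y^\ell$-divisible parts, I obtain $\alpha \equiv 1 \pmod{\n}$ and $\varepsilon \equiv \alpha \varepsilon' \pmod{\n}$, yielding $\varepsilon \equiv \varepsilon' \pmod{\n}$.
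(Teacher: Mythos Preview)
Your proof is correct. Parts (1) and (3) follow essentially the paper's route: in (1) the paper normalizes $b$ first via the DVR $S/(a)$ and then computes lengths, whereas you normalize $a$ first and use the PIR structure of $R/(\overline a)$ to identify $b$ --- the two arguments are interchangeable. In (3) the paper simply refers back to the technique of Proposition~\ref{4.5}(2), and your explicit colength-and-coefficient comparison is precisely that technique.

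The genuine difference is in part (2). The paper proceeds by writing down explicit polynomials
\[
\varphi=\begin{cases}-\varepsilon^{-1}XY & (k=3)\\ \sum_{i=0}^{k-4}(-1)^{i+k-4}(i+1)\varepsilon^{-(k-2)+i}X^{k-2-i}Y^{i\ell+1} & (k\ge5)\end{cases},\qquad
\psi=\begin{cases}Y^p & (k=3)\\ -(k-2)\varepsilon^{-1}XY^{p-\ell} & (k\ge5)\end{cases}
\]
and checking directly that $a^2\varphi+ab\psi+b^2=\delta X^kY$. Your argument instead derives the required identity structurally: you combine the oddness factorization $X^k+\varepsilon^kY^{k\ell}=aP$ with the expansion $X^2=a^2-2\varepsilon XY^\ell-\varepsilon^2Y^{2\ell}$ and the numerical relation $2\ell+2p=k\ell+1$ to obtain $b^2+\varepsilon^{2-k}X^kY=aQ$, and then verify $Q\in(a,b)$ by reducing modulo $a$. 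This is uniform in $k$ (no case split) and explains where the formulas come from; in fact your $-Q$ agrees with the paper's $a\varphi+b\psi$. The paper's approach is shorter to state but opaque; yours is longer to carry out but more transparent.
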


\begin{proof}
(1) Suppose that $n=1$. Since $(a, Y) = \n$ and $S/(a)$ is a DVR, $b_1 = \rho Y^{p-1} + ab_2$ for some $p > 0$, $\rho \in U(S)$, and $b_2 \in S$ (notice that $b_1 \notin (a)$, since $b \notin (a)$). Then $(a, b) = (a, XY^p)$. In contrast, because $a_1 \notin (X)$, we can write $a_1 = \tau Y^{\ell-1} + a_2 X$ for some $\ell>0$ and $a_2 \in S$.
We then have  $a = X + a_2XY + \tau Y^{\ell} = (1 + a_2Y)X + \tau Y^{\ell }$, hence we may assume $a = X + \varepsilon Y^{\ell}$ with $\ell >0$ and $\varepsilon \in U(S)$. Now notice that $\ell_S(S/(a, X^kY)) = \ell_S(S/(X + \varepsilon Y^{\ell}, X^kY)) = k\ell + 1$ and $\ell_S(S/(a, b)) = \ell_S(S/(X + \varepsilon Y^{\ell}, XY^p)) = \ell + p$, so that $k\ell + 1 = 2(\ell + p)$, whence $(k-2)\ell = 2p - 1$ and $k$ is odd.

(2)  Let $a = X + \varepsilon Y^{\ell}$ and $b = XY^p$ with $\varepsilon \in U(S)$ and $\ell, p > 0$ such that $(k-2)\ell = 2p-1$. Then $a, b$ is a system of parameters of $S$. We set 
$$
\varphi =
\begin{cases}
-\varepsilon^{-1} XY\ \ \text{if $k = 3$}\\
\displaystyle\sum_{i = 0}^{k-4} (-1)^{i + k - 4}(i + 1)\varepsilon^{-(k-2)+ i}X^{k-2-i}Y^{i\ell + 1}\ \ \text{if $k\ge 5$}
\end{cases}, 
$$
$$
\psi =
\begin{cases}
Y^p\ \ \text{if $k =3$}\\
-(k-2)\varepsilon^{-1}XY^{p-\ell}\ \ \text{if $k\ge 5$}
\end{cases}, 
\text{and}\ \ 
\delta = 
(-1)^{k-4}\varepsilon^{-(k-2)}.
$$
Then we have $a^2\varphi + ab\psi + b^2 = \delta X^kY$, thus $(\overline{a}, \overline{b}) \in \calX_R$ by Proposition \ref{2.1}.
The Assertion (3) follows from the same technique as in the proof of Proposition \ref{4.5} (2).
\end{proof}

As a consequence, we get the following.

\begin{thm}\label{4.17}
The following assertions hold true. 
\begin{enumerate}[$(1)$]
\item
Suppose that $R = k[[X, Y]]/(X^3Y)$. Then
$$
\calX_R = \{ (x^3, y) \} \cup \{ (x + \overline{\varepsilon}y^{2p-1}, xy^p) \mid p>0, \varepsilon \in U(S) \}.
$$
\item
Suppose that $R = k[[X, Y]]/(X^4Y)$. Then
$$
\calX_R = \{ (x^4, y) \} \cup \{ (x^2 + \overline{\varepsilon}y, xy) \mid \varepsilon \in U(S) \}.
$$
\end{enumerate}
\end{thm}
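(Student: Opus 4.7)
The plan is to combine the preparatory results (Corollary \ref{4.3}, Propositions \ref{4.12}, \ref{4.14}, \ref{4.15}, and \ref{4.16}) by case analysis on the exponent $n$ appearing in the normal form of an indecomposable Ulrich ideal. By Corollary \ref{4.3}, the only decomposable Ulrich ideal is $(x^k, y)$; so in both assertions it remains to determine the indecomposable members of $\calX_R$. First I would invoke Proposition \ref{4.12} to write any indecomposable $I \in \calX_R$ as $I = (\overline{a}, \overline{b})$ with $a = X^n + a_1 Y$, $b = b_1 XY$, $a_1 \notin (X)$, and $1 \le n \le k-2$ by Proposition \ref{4.14}(1). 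The sharp bound $n \le k-2$ is precisely what restricts the case analysis to a handful of possibilities when $k = 3$ or $k = 4$.

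For assertion (1), $k = 3$ forces $n = 1$. Then Proposition \ref{4.16}(1) tells us $(a, b) = (X + \varepsilon Y^{\ell}, X Y^p)$ with $(k-2)\ell = 2p - 1$, i.e.\ $\ell = 2p - 1$, for some $\varepsilon \in U(S)$ and $p > 0$. Proposition \ref{4.16}(2) supplies the converse: every such pair $(\ell, p) = (2p-1, p)$ with $\varepsilon \in U(S)$ does produce an element of $\calX_R$. Adding back $(x^3, y)$ yields the asserted list.

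For assertion (2), $k = 4$ leaves two possibilities, $n = 1$ or $n = 2$. If $n = 1$, Proposition \ref{4.16}(1) would require $(k-2)\ell = 2\ell = 2p - 1$, which is impossible because the left side is even and the right side odd; so $n = 1$ is ruled out. If $n = 2 = k-2$, then Proposition \ref{4.14}(2) applies (note $k = 4 \ge 4$) and gives $xy \in I$, at which point Proposition \ref{4.15}(1) forces $I = (x^2 + \overline{\varepsilon} y, xy)$ for some $\varepsilon \in U(S)$, and conversely every such ideal is Ulrich by Proposition \ref{4.15}(1). Adjoining the decomposable ideal $(x^4, y)$ completes the list.

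I do not anticipate any real obstacle here: all the structural work — the normal form of Proposition \ref{4.12}, the numerical restriction $n \le k-2$, the sufficiency/necessity statements of Propositions \ref{4.15} and \ref{4.16}, and the parity obstruction for $n=1$, $k=4$ — has already been carried out in the preceding sections. The only step that requires any care is making sure that in the $k = 4$, $n = 2$ branch one is allowed to apply Proposition \ref{4.14}(2) (which assumes $k \ge 4$) and then Proposition \ref{4.15}(1), and this is immediate.
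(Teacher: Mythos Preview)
Your proposal is correct and follows exactly the approach the paper takes: the paper's own proof is the single sentence ``These assertions readily follow from Corollary \ref{4.3}, Proposition \ref{4.14}, Proposition \ref{4.15}, and Proposition \ref{4.16},'' and you have simply spelled out how those ingredients fit together via the case analysis on $n$. The parity obstruction you use for $k=4$, $n=1$ is precisely the ``$k$ is odd'' conclusion of Proposition \ref{4.16}(1).
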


\begin{proof}
These assertions readily follow from Corollary \ref{4.3}, Proposition \ref{4.14}, Proposition \ref{4.15}, and Proposition \ref{4.16}.
\end{proof}

\begin{acknowledgement} {\rm The author is grateful to Professor S. Goto for his helpful advice and useful comments. }
\end{acknowledgement}



\begin{thebibliography}{99}










\bibitem{BHU}
{\sc J. P. Brennan, J. Herzog, and B. Ulrich}, Maximally generated maximal Cohen-Macaulay modules, {\em Math. Scand.}, {\bf 61} (1987), no. 2, 181--203.











\bibitem{GIK}
{\sc S. Goto, R. Isobe, and S. Kumashiro}, The structure of chains of Ulrich ideals in Cohen-Macaulay local rings of dimension one, {\em Acta Mathematica Vietnamica}, {\bf 44} (2019), Issue 1, 65--82.

\bibitem{GIT}
{\sc S. Goto, R. Isobe, and N. Taniguchi}, Ulrich ideals and 2-AGL rings, arXiv:1902.05335.


\bibitem{GIW}
{\sc S. Goto, S.-i. Iai, and K.-i. Watanabe}, Good ideals in Gorenstein local rings, {\em Tran. Amer. Math. Soc.}, {\bf 353} (2000), 2309--2346.














\bibitem{GOTWY1}
{\sc S. Goto, K. Ozeki, R. Takahashi, K.-i. Yoshida, and K.-i. Watanabe}, Ulrich ideals and modules, {\em Math. Proc. Camb. Phil. Soc.}, {\bf 156} (2014), 137--166.


\bibitem{GOTWY2}
{\sc S. Goto, K. Ozeki, R. Takahashi, K.-i. Yoshida, and K.-i. Watanabe}, Ulrich ideals and modules over two-dimensional rational singularities, {\em Nagoya Math. J.}, {\bf 221} (2016), 69--110.




\bibitem{GS}
{\sc S. Goto and Y. Shimoda}, On the Rees algebras of Cohen-Macaulay local rings, {\em Lecture Notes in Pure and Appl. Math.}, {\bf 68} (1982), 201--231, Marcel Dekker.



\bibitem{GTT2}
{\sc S. Goto, R. Takahashi, and N. Taniguchi}, Ulrich ideals and almost Gorenstein rings, {\em Proc. Amer. Math. Soc.}, {\bf 144} (2016), 2811--2823.


\bibitem{GW}
{\sc S. Goto and K. Watanabe}, On graded rings, I, {\em J. Math. Soc. Japan}, {\bf 30} (1978), 179--213.









\bibitem{S} 
{\sc J. Sally}, Cohen-Macaulay local rings of maximal embedding dimension, {\em J. Algebra}, {\bf 56} (1979), 168--183.




\bibitem{T} 
{\sc J. Tate}, Homology of Noetherian rings and local rings, {\em Illinois J. Math.}, {\bf 1} (1957), 14--27.




















\end{thebibliography}
\end{document}